\let\origsection=\section \def\section{\@ifstar{\origsection*}{\mysection}}
\def\mysection{\@startsection{section}{1}\z@{.7\linespacing\@plus\linespacing}{.5\linespacing}{\normalfont\scshape\centering\S}}
\renewcommand{\PrintDOI}[1]{\doi{#1}}
\numberwithin{equation}{section}
\numberwithin{figure}{section}
\def\rmlabel{\upshape({\itshape \roman*\,})}
\let\polishlcross=\l
\def\l{\ifmmode\ell\else\polishlcross\fi}
\let\emptyset=\varnothing
\let\setminus=\smallsetminus
\let\sm=\smallsetminus
\def\moverlay{\mathpalette\mov@rlay}
\def\mov@rlay#1#2{\leavevmode\vtop{   \baselineskip\z@skip \lineskiplimit-\maxdimen
   \ialign{\hfil$\m@th#1##$\hfil\cr#2\crcr}}}
\newcommand{\charfusion}[3][\mathord]{
    #1{\ifx#1\mathop\vphantom{#2}\fi
        \mathpalette\mov@rlay{#2\cr#3}
      }
    \ifx#1\mathop\expandafter\displaylimits\fi}
\newcommand{\dcup}{\charfusion[\mathbin]{\cup}{\cdot}}
\DeclareFontFamily{U}  {MnSymbolC}{}
\DeclareSymbolFont{MnSyC}         {U}  {MnSymbolC}{m}{n}
\DeclareFontShape{U}{MnSymbolC}{m}{n}{
    <-6>  MnSymbolC5
   <6-7>  MnSymbolC6
   <7-8>  MnSymbolC7
   <8-9>  MnSymbolC8
   <9-10> MnSymbolC9
  <10-12> MnSymbolC10
  <12->   MnSymbolC12}{}
\DeclareMathSymbol{\powerset}{\mathord}{MnSyC}{180}
\newcommand{\redge}[8]{

		\ifx\relax#5\relax
		\def\qoffs{0pt}
	\else
		\def\qoffs{#5}
	\fi

				\def\rhedge{
		($#1+#4!\qoffs!-90:#2-#4$) -- 
		($#2+#1!\qoffs!-90:#3-#1$) -- 
		($#3+#2!\qoffs!-90:#4-#2$) -- 
		($#4+#3!\qoffs!-90:#1-#3$) -- cycle}

	\coordinate (12) at ($#1!\qoffs!90:#2$);
	\coordinate (14) at ($#1!\qoffs!-90:#4$);
	\coordinate (23) at ($#2!\qoffs!90:#3$);
	\coordinate (21) at ($#2!\qoffs!-90:#1$);
	\coordinate (34) at ($#3!\qoffs!90:#4$);
	\coordinate (32) at ($#3!\qoffs!-90:#2$);
	\coordinate (41) at ($#4!\qoffs!90:#1$);
	\coordinate (43) at ($#4!\qoffs!-90:#3$);
	
	\def\nrhedge{
		(14) let \p1=($(14)-#1$), \p2=($(12)-#1$) in 
			arc[start angle={atan2(\y1,\x1)}, delta angle={atan2(\y2,\x2)-atan2(\y1,\x1)-360*(atan2(\y2,\x2)-atan2(\y1,\x1)>0)}, x radius=\qoffs, y radius=\qoffs] --
		(21) let \p1=($(21)-#2$), \p2=($(23)-#2$) in 
			arc[start angle={atan2(\y1,\x1)}, delta angle={atan2(\y2,\x2)-atan2(\y1,\x1)-360*(atan2(\y2,\x2)-atan2(\y1,\x1)>0)}, x radius=\qoffs, y radius=\qoffs] --
		(32) let \p1=($(32)-#3$), \p2=($(34)-#3$) in 
			arc[start angle={atan2(\y1,\x1)}, delta angle={atan2(\y2,\x2)-atan2(\y1,\x1)-360*(atan2(\y2,\x2)-atan2(\y1,\x1)>0)}, x radius=\qoffs, y radius=\qoffs] --
		(43) let \p1=($(43)-#4$), \p2=($(41)-#4$) in 
			arc[start angle={atan2(\y1,\x1)}, delta angle={atan2(\y2,\x2)-atan2(\y1,\x1)-360*(atan2(\y2,\x2)-atan2(\y1,\x1)>0)}, x radius=\qoffs, y radius=\qoffs] --
		cycle}

		\ifx\relax#6\relax
		\def\rlwidth{1pt}
	\else
		\def\rlwidth{#6}
	\fi
	
		\ifx\relax#8\relax
		\fill \nrhedge;
	\else
		\fill[#8]\nrhedge;
	\fi

		\ifx\relax#7\relax
		\draw[line width=\rlwidth,rounded corners=\qoffs]\nrhedge;
	\else
		\draw[line width=\rlwidth,#7]\nrhedge;
	\fi
	}
\newcommand{\qedge}[7]{

	\ifx\relax#4\relax
		\def\qoffs{0pt}
	\else
		\def\qoffs{#4}
	\fi

	\def\qhedge{
		($#1+#3!\qoffs!-90:#2-#3$) --
		($#2+#1!\qoffs!-90:#3-#1$) --
		($#3+#2!\qoffs!-90:#1-#2$) -- cycle}

	\coordinate (12) at ($#1!\qoffs!90:#2$);
	\coordinate (13) at ($#1!\qoffs!-90:#3$);
	\coordinate (23) at ($#2!\qoffs!90:#3$);
	\coordinate (21) at ($#2!\qoffs!-90:#1$);
	\coordinate (31) at ($#3!\qoffs!90:#1$);
	\coordinate (32) at ($#3!\qoffs!-90:#2$);
	
	\def\nqhedge{
		(13) let \p1=($(13)-#1$), \p2=($(12)-#1$) in
			arc[start angle={atan2(\y1,\x1)}, delta angle={atan2(\y2,\x2)-atan2(\y1,\x1)-360*(atan2(\y2,\x2)-atan2(\y1,\x1)>0)}, x radius=\qoffs, y radius=\qoffs] --
		(21) let \p1=($(21)-#2$), \p2=($(23)-#2$) in
			arc[start angle={atan2(\y1,\x1)}, delta angle={atan2(\y2,\x2)-atan2(\y1,\x1)-360*(atan2(\y2,\x2)-atan2(\y1,\x1)>0)}, x radius=\qoffs, y radius=\qoffs] --
		(32) let \p1=($(32)-#3$), \p2=($(31)-#3$) in
			arc[start angle={atan2(\y1,\x1)}, delta angle={atan2(\y2,\x2)-atan2(\y1,\x1)-360*(atan2(\y2,\x2)-atan2(\y1,\x1)>0)}, x radius=\qoffs, y radius=\qoffs] --
		cycle}

		\ifx\relax#5\relax
		\def\qlwidth{1pt}
	\else
		\def\qlwidth{#5}
	\fi
	
		\ifx\relax#7\relax
		\fill \nqhedge;
	\else
		\fill[#7]\nqhedge;
	\fi

		\ifx\relax#6\relax
		\draw[line width=\qlwidth,rounded corners=\qoffs]\nqhedge;
	\else
		\draw[line width=\qlwidth,#6]\nqhedge;
	\fi
}
\let\epsilon=\varepsilon
\let\rho=\varrho
\let\theta=\vartheta
\let\phi=\varphi
\def\EE{{\mathds E}}
\def\NN{{\mathds N}}
\def\PP{{\mathds P}}
\newcommand{\cK}{\mathcal{K}}
\newcommand{\cR}{\mathcal{R}}
\newcommand{\cT}{\mathcal{T}}
\newcommand{\ccC}{\mathscr{C}}
\newcommand{\ccW}{\mathscr{W}}
\newtheoremstyle{note}  {4pt}  {4pt}  {\sl}  {}  {\bfseries}  {.}  {.5em}          {}
\newtheoremstyle{introthms}  {3pt}  {3pt}  {\itshape}  {}  {\bfseries}  {.}  {.5em}          {\thmnote{#3}}
\newtheoremstyle{remark}  {2pt}  {2pt}  {\rm}  {}  {\bfseries}  {.}  {.3em}          {}
\theoremstyle{plain}
\newtheorem{thm}{Theorem}[section]
\newtheorem{theorem}{Theorem}[section]
\newtheorem{lemma}[theorem]{Lemma}
\newtheorem{prop}[theorem]{Proposition}
\newtheorem{claim}[theorem]{Claim}
\theoremstyle{note}
\newtheorem{dfn}[theorem]{Definition}
\theoremstyle{remark}
\newcommand{\seq}[1]{\accentset{\rightharpoonup}{#1}}
\newcommand*\patchAmsMathEnvironmentForLineno[1]{\expandafter\let\csname old#1\expandafter\endcsname\csname #1\endcsname
\expandafter\let\csname oldend#1\expandafter\endcsname\csname end#1\endcsname
\renewenvironment{#1}{\linenomath\csname old#1\endcsname}{\csname oldend#1\endcsname\endlinenomath}}\newcommand*\patchBothAmsMathEnvironmentsForLineno[1]{\patchAmsMathEnvironmentForLineno{#1}\patchAmsMathEnvironmentForLineno{#1*}}\AtBeginDocument{\patchBothAmsMathEnvironmentsForLineno{equation}\patchBothAmsMathEnvironmentsForLineno{align}\patchBothAmsMathEnvironmentsForLineno{flalign}\patchBothAmsMathEnvironmentsForLineno{alignat}\patchBothAmsMathEnvironmentsForLineno{gather}\patchBothAmsMathEnvironmentsForLineno{multline}}
\begin{document}
\title[Squares of Hamiltonian cycles in 3-uniform hypergraphs]{Squares of Hamiltonian 
cycles in 3-uniform hypergraphs}

\author[Wiebke Bedenknecht, Christian Reiher]{Wiebke Bedenknecht, Christian Reiher}
\thanks{The second author was supported by the European Research Council 
(ERC grant PEPCo 724903).}

\address{Fachbereich Mathematik, Universit\"at Hamburg,
  Bundesstra\ss{}e~55, D-20146 Hamburg, Germany}
\email{Wiebke.Bedenknecht@uni-hamburg.de}
\email{Christian.Reiher@uni-hamburg.de}
\keywords{hypergraphs, Hamiltonian cycles, P\'osa's conjecture}
\subjclass[2010]{Primary: 05C65. Secondary: 05C45.
}
\begin{abstract}
We show that every $3$-uniform hypergraph $H=(V,E)$ with $|V(H)|=n$ and minimum pair 
degree at least $(4/5+o(1))n$ contains a squared Hamiltonian cycle. This may be regarded
as a first step towards a hypergraph version of the P\'osa-Seymour conjecture.
\end{abstract} 

\maketitle

\section{Introduction} \label{sec:intro}
\subsection{Graphs} 
G. A. Dirac \cite{dirac} proved in 1952 that every graph $G=(V,E)$ with $|V|\geq 3$ and 
minimum vertex degree $\delta (G)\geq |V|/2$ contains a Hamiltonian cycle. 
Since on any set~$V$ of at least three vertices there are graphs $G$ with minimum degree 
$\delta (G)=\lceil |V|/2\rceil -1$, which do not contain a Hamiltonian cycle, 
this is an optimal result. 
Moreover, in 1962 P\'osa conjectured that every graph $G=(V,E)$ with $|V|\geq 5$ 
and minimum degree $\delta(G)\geq 2|V|/3$ contains the square of a Hamiltonian cycle.  
This conjecture was generalised further by Seymour~\cite{PDS} to the so-called {\it 
P\'osa-Seymour conjecture}, asking for the $k$-th power of a Hamiltonian cycle in graphs $G$ 
with $\delta(G)\geq \frac {k}{k+1}|V|$. Let us recall at this point that a graph is said to contain
the {\it $k$-th power of a Hamiltonian cycle} if its vertices can be arranged on a circle in such a way 
that any two vertices whose distance is at most $k$ are connected by an edge. 

A proof of this generalised conjecture for large graphs was obtained by Koml\'os, S\'ark\"ozy, 
and Szemer\'edi~\cite{KoSaSz98}. Their proof is based on the regularity method for graphs and 
uses the so-called {\it blow-up lemma} \cite{KoSaSz97} that was developed by the same authors shortly
before.  We will study an analogous P\'osa-type problem for $3$-uniform hypergraphs, i.e., what 
minimum pair-degree condition guarantees the existence of a squared Hamiltonian cycle?

\subsection{Hamiltonian cycles in hypergraphs}
A {\it $3$-uniform hypergraph} $H=(V,E)$ consists of a finite set $V=V(H)$ of vertices and a 
family $E=E(H)$ of $3$-element subsets of $V$, which are called {\it (hyper)edges}. 
Throughout this article if we talk about hypergraphs we will always mean $3$-uniform hypergraphs. 
We will write $xy$ and $xyz$ instead of $\{x,y\}$ and $\{x,y,z\}$ for edges and hyperedges. 
Similarly, we shall say that $wxyz$ is a {\it tetrahedron} or a~$K^{(3)}_4$ in a hypergraph $H$ 
if the triples $wxy$, $wxz$, $wyz$, and $xyz$ are edges of $H$.

There are at least two concepts of minimum degree and several notions of cycles like tight, 
loose and Berge cycles~\cite{berge} (see also~\cite{Bermond}). Here we will only introduce 
some of these notions.
 
If $H=(V,E)$ is a hypergraph and $v\in V$ is a vertex of $H$, then we denote by
\[
	d_H(v)=|\{ e\in E\colon v\in e\}|
\]
the {\it degree of $v$} and by
\[
	\delta_1(H)=\min \{d_H(v)\colon v\in V\}
\]
the {\it minimum vertex degree} of $H$ taken over all $v\in V$.

Similarly, for two vertices $u,v\in V$ we denote by
\[
	d_H(u,v)=|N_H(u,v)|=|\{ e\in E\colon u,v\in e\}|
\]
the {\it pair-degree of $u$ and $v$} and by
\[
	\delta_2(H)=\min \{d_H(u ,v)\colon uv\in V^{(2)}\}
\]
the {\it minimum pair-degree} of $H$ taken over all pairs of vertices of $H$.

We call a hypergraph $P$ a {\it tight path of length $\ell$}, if $|V(P)|=\ell+2$ 
and there exists an ordering of the vertices $V(P)=\{v_1,\ldots ,v_{\ell+2}\}$ 
such that a triple $e$ forms a hyperedge of~$P$ iff $e=\{v_i,v_{i+1},v_{i+2}\}$ 
for some $i\in [\ell]$. A {\it tight cycle $C$ of length $\ell \geq 4$} consists 
of a path $v_1\ldots v_{\ell}$ of length $\ell-2$ and the additional hyperedges 
$\{v_{\ell-1},v_{\ell },v_1\}$ and $\{ v_{\ell}, v_1,v_2\}$.  

A (tight) {\it Hamiltonian cycle} in a hypergraph $H$ is a tight cycle passing through 
all vertices of $H$. In other words, $H$ contains a Hamiltonian cycle if its vertices
can be arranged around a circle in such a way that any three consecutive vertices form 
an edge. The problem to determine optimal minimum degree conditions for hypergraphs, which 
enforce, as in Dirac's theorem, the existence of a Hamiltonian cycle, has received considerable 
attention. The first asymptotically optimal Dirac-type result for $3$-uniform hypergraphs 
was obtained by R{\"o}dl, Ruci{\'n}ski, and Szemer{\'e}di, who proved in \cite{rrs3} that 
every $n$-vertex hypergraph~$H$ with $\delta_2(H)\geq \bigl(\frac{1}{2}+o(1)\bigr)n$ contains 
a Hamiltonian cycle. In \cite{3} the same authors showed this for large~$n$ under the optimal assumption 
$\delta_2(H)\geq \lfloor n/2 \rfloor $. Moreover, it was proved in \cite{R3S2} that a 
minimum vertex degree condition of $\delta_1(H)\geq \bigl(\frac{5}{9}+o(1)\bigr)\frac{n^2}{2}$ 
guarantees the existence of a Hamiltonian cycle as well, where the constant $5/9$ is again 
best possible.  

\subsection{Squared Hamiltonian cycles in hypergraphs}
We call a hypergraph $P'$ a {\it squared path of length $\ell\geq 2$}, 
if $|V(P')|=\ell+2$ and there exists an ordering of the vertices 
$V(P')=\{v_1,\ldots ,v_{\ell+2}\}$ such that a triple~$e$ forms a hyperedge iff 
$e\subseteq \{v_i,v_{i+1},v_{i+2},v_{i+3}\}$ for some $i\in [\ell-1]$. 
In this case, we also say that $P'$ is a squared path {\it from} $(v_1, v_2, v_3)$ 
{\it to} $(v_{\ell}, v_{\ell+1}, v_{\ell+2})$, that $(v_1, v_2, v_3)$ and 
$(v_{\ell}, v_{\ell+1}, v_{\ell+2})$ are the {\it end-triples} of $P'$, and that 
the vertices $v_i$ with $4\le i\le \ell-1$ are the {\it internal vertices} of~$P'$.
{\it Squared walks} are defined like squared paths with the only difference that they 
are allowed to have repeated vertices.   

A {\it squared cycle~$C'$ of length~$\ell \geq 5$} consists of a squared path 
$v_1\ldots v_{\ell}$ of length~${\ell-2}$ and the additional hyperedges~$e$, which are $3$-subsets 
of at least one of the sets $\{v_{\ell-2},v_{\ell-1},v_{\ell },v_1\}$,  
$\{v_{\ell-1},v_{\ell },v_1, v_2\}$ or $\{ v_{\ell}, v_1,v_2,v_3\}$. 
As expected, a {\it squared Hamiltonian cycle} in a hypergraph~$H$ is a squared cycle passing 
through all vertices. 

Thus an $n$-vertex hypergraph $H$ contains a Hamiltonian squared cycle if its vertices 
can be arranged on a circle in such a way that every triple of vertices contained in an 
interval of length $4$ is an edge of $H$. Clearly this is a natural analogue of 
the concept of squared Hamiltonian cycles in graphs, where any pair contained in an interval 
of length $3$ is required to be an edge.

The main result of this article reads as follows.

\begin{thm}\label{thm:main}
For every $\alpha >0$ there exists an integer $n_0$ such that every $3$-uniform 
hypergraph~$H$ with $n\geq n_0$ vertices and with minimum pair-degree 
$\delta_2(H)\geq (\frac{4}{5}+\alpha )n$ contains a squared Hamiltonian cycle.
\end{thm}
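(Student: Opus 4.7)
The plan is to adapt the absorbing method of Rödl, Ruciński, and Szemerédi (as developed for tight Hamilton cycles in \cite{rrs3,3}) to the squared setting. I would construct, in order, a small \emph{reservoir} $R$, a sublinear \emph{absorbing squared path} $P_A$ whose end-triples lie outside $R$, and then, after setting aside $V(P_A)\cup R$, a bounded number of long squared paths covering all but $o(n)$ of the remaining vertices. These pieces are joined into a single squared cycle through short bridges whose interiors are drawn from $R$, and the few still-uncovered vertices are swallowed by $P_A$.

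\textbf{Connecting lemma.} The workhorse is a \emph{connecting lemma} stating that any two vertex-disjoint end-triples $\seq{a}=(a_1,a_2,a_3)$ and $\seq{b}=(b_1,b_2,b_3)$ can be joined by a squared path of some bounded length whose interior vertices all lie in a prescribed set $S$ with $|S|\geq(1-\gamma)n$. To extend a current end-triple $(x,y,z)$ by one vertex $w$, one needs $w\in N_H(x,y)\cap N_H(x,z)\cap N_H(y,z)\cap S$, which by inclusion--exclusion has size at least
\[
 3\bigl(\tfrac{4}{5}+\alpha\bigr)n - 2n - \gamma n \;=\; \bigl(\tfrac{2}{5}+3\alpha-\gamma\bigr)n,
\]
so as long as $\gamma<3\alpha$ each extension step has $\Omega(n)$ choices. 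A standard branching argument — extending $\seq{a}$ and $\seq{b}$ a constant number of steps and showing that the resulting families of end-triples must share a common bridge — then closes the gap in a bounded number of steps.

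\textbf{Absorbing and reservoir construction.} For each $v\in V$ I would exhibit, by a direct counting argument from the pair-degree hypothesis, polynomially many constant-sized \emph{absorbers}: squared paths $A_v$ with prescribed end-triples that admit a spanning squared path both on $V(A_v)$ and on $V(A_v)\cup\{v\}$ with the same end-triples. A standard probabilistic selection (each absorber chosen independently with a tiny probability) together with a cleaning step assigns one absorber to every vertex; concatenating these absorbers via short bridges from the connecting lemma produces $P_A$. An independent random sample yields the reservoir $R$ in which the connecting lemma continues to hold with interiors restricted to $R$. After removing $V(P_A)\cup R$, an application of a $3$-uniform hypergraph regularity lemma and an analysis of the reduced hypergraph (using that the inherited density version of the condition $\delta_2(H)\geq(4/5+\alpha)n$ forces a well-connected structure) produces boundedly many long squared paths covering all but $o(n)$ vertices. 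The leftover vertices together with $R$ are absorbed using $P_A$, and the configuration is closed up into a squared Hamiltonian cycle.

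\textbf{Main obstacle.} The crux is the existence, for every vertex $v$, of enough squared-path absorbers. Unlike in the tight Hamilton cycle problem, where an absorber needs to accommodate one extra vertex in an ordinary path, here the same vertex set must support two genuinely different \emph{squared}-path structures sharing a common boundary, so the local gadget is substantially more intricate. Isolating the right constant-size configuration and verifying that $\delta_2(H)\geq(4/5+\alpha)n$ forces $\Theta(n^{|V(A_v)|-1})$ copies of it at each $v$ is, I expect, the step that genuinely dictates the constant $4/5$ and where the bulk of the technical work will lie.
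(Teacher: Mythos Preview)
Your high-level architecture---reservoir, absorbing squared path, almost-cover via regularity, then connect through the reservoir and absorb the remainder---is exactly the one the paper follows. But you have inverted the location of the real difficulty.

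The absorber is \emph{not} the obstacle. A $v$-absorber in the paper is simply a $6$-tuple $(a,b,c,d,e,f)$ such that both $abcdef$ and $abcvdef$ are squared paths. Choosing the vertices greedily in order, the worst step (for $d$, $e$, or $f$) imposes five pair-degree conditions simultaneously, leaving at least $5\delta_2(H)-4n\ge 5\alpha n$ choices; the whole count is $\Omega(\alpha^3 n^6)$ and occupies half a page. There is no intricate gadget to isolate.

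The genuine gap in your sketch is the connecting lemma. You correctly compute that each end-triple has $\Omega(n)$ one-step squared extensions, but the ``standard branching argument'' you invoke does not close the gap by itself: to connect $(a,b,c)$ to $(x,y,z)$ you need the fan of \emph{ordered triples} reachable from one side to meet the (reversed) fan from the other, and nothing you have said forces the reachable set to expand to a positive fraction of $V^3$ in bounded time. In the tight-cycle problem this works because one tracks pairs and the pair-degree hypothesis directly drives expansion; here one tracks triples, the extension rule is a triple intersection of neighbourhoods, and there is no obvious obstruction to a small family of triples mapping only among themselves.

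The paper spends the bulk of its effort here, via a hierarchy of auxiliary graphs: a graph $G_3$ on $V$ with $xy$ an edge when many $abc\in E$ span tetrahedra with both $x$ and $y$; graphs $G_v$ defined analogously through a fixed vertex $v$; and graphs $G_{vw}$ on $N(v,w)$ whose edges complete a $K_4^{(3)}$ with $v,w$. Each is shown separately to be robustly connected (this is where the case analysis lives), and the squared connecting walk is threaded through them in stages. It is in $G_{vw}$, where one needs $\delta(G_{vw})>\tfrac12|V(G_{vw})|$, that $4/5$ is forced: the calculation requires $\tfrac{5}{2}\delta_2(H)-2n>0$, i.e.\ $\delta_2(H)>\tfrac45 n$. (The absorber count also happens to need $4/5$, but not because of any subtlety---just because $5\cdot\tfrac34-4<0$.)

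So: the plan is right, but the technical weight is in the connecting lemma, not the absorber, and that lemma is not ``standard''.
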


We will denote by $K_4^{(3)}$ the complete $3$-uniform hypergraph on $4$ vertices. 
Note that any four consecutive vertices in a squared Hamiltonian cycle span a copy 
of~$K_4^{(3)}$. Therefore, if~$n$ is divisible by $4$, a squared Hamiltonian cycle contains 
a {\it $K_4^{(3)}$-tiling}, i.e., $\frac{n}{4}$ vertex disjoint copies of $K_4^{(3)}$. 
The problem to enforce $K_4^{(3)}$-tilings by an appropriate pair-degree condition was 
studied by Pikhurko \cite{Pi08}, who exhibited for every $n$ divisible by $4$ a hypergraph~$H$
on~$n$ vertices with $\delta_2(H)=\frac{3}{4}n-3$ not containing a $K_4^{(3)}$-tiling. 
Moreover, he proved that every $n$-vertex hypergraph $H$ with 
$\delta_2(H)\geq \big(\frac{3}{4}+o(1)\big)n$ contains vertex-disjoint copies of~$K_4^{(3)}$ 
covering all but at most $14$ vertices. We remark that based on Pikhurko's work~\cite{Pi08} 
the pair-degree problem for $K_4^{(3)}$-tilings was solved by Keevash and Mycroft 
in~\cite{KeMy15}. They showed that all $3$-uniform hypergraphs $H$ of sufficiently large 
order $n$ with $4\mid n$ and minimum pair-degree 
\[
	\delta_2(H)\geq 
	\begin{cases}
    	 3n/4-2 & \text{if } 8\mid n, \\
     	3n/4-1 & \text{otherwise } \\
   	\end{cases}
\]
contain a perfect $K_4^{(3)}$-tiling.

Notice that in view of Pikhurko's example the constant $\frac{4}{5}$ occurring in 
Theorem~\ref{thm:main} cannot be  replaced by anything below $\frac{3}{4}$ in case $4\mid n$. 
In order to extend this observation to all congruence classes modulo $4$ we take a closer look 
at the construction from \cite{Pi08}. Partition the vertex set 
$V=A_0\dcup A_1\dcup A_2\dcup A_3$ such that $\big | |A_i|-|A_j|\big |\leq 1$ for 
$0\leq i<j\leq 3$. Let $H$ be the hypergraph consisting of all the triples $e$ that satisfy 
one of the following properties (see Fig.~\ref{fig:Pikhurko}):
\begin{itemize}
\item $|A_0\cap e|=2$;
\item $e$ intersects each of $A_0, A_i, A_j$ for some $1\leq i<j\leq 3$;
\item $e\subseteq A_i$ for some $i\in [3]$;
\item $|e\cap A_i|=1$ and $|e\cap A_j|=2$ for some pair $ij\in [3]^{(2)}$.
\end{itemize}

\begin{figure}
\begin{tikzpicture}[scale=0.95]
	
	\coordinate (a) at (4,7);
	\coordinate (x) at (9,3);
	\coordinate (b) at (4,-1);
	\coordinate (z) at (-1,3);

	\coordinate (y1) at (4.25,6.35);
	\coordinate (y2) at (3.75,6.45);
	\coordinate (y3) at (3.75,6.25);
	
	\coordinate (w1) at (4,-0.5);
	\coordinate (w2) at (8,0.1);
	\coordinate (w3) at (0,0.1);
	
	\coordinate (k1) at (2,6);
	\coordinate (k2) at (0,5);
	\coordinate (k3) at (0,4.5);
	
	\coordinate (e1) at (6,6);
	\coordinate (e2) at (8,5);
	\coordinate (e3) at (8,4.5);
	
	\coordinate (f1) at (4,5.5);
	\coordinate (f2) at (3.75,0.5);
	\coordinate (f3) at (4.25,0.5);
	
	\coordinate (g) at (3.5,0.3);
   
	\begin{pgfonlayer}{front}
	
		\fill (y1) circle (2pt);
		\fill (y2) circle (2pt);
		\fill (y3) circle (2pt);
			
		\fill (w1) circle (2pt);
		\fill (w2) circle (2pt);
		\fill (w3) circle (2pt);
		
		\fill (k1) circle (2pt);
		\fill (k2) circle (2pt);
		\fill (k3) circle (2pt);
		
		\fill (e1) circle (2pt);
		\fill (e2) circle (2pt);
		\fill (e3) circle (2pt);
		
		\fill (f1) circle (2pt);
		\fill (f2) circle (2pt);
		\fill (f3) circle (2pt);
	
		\node at ($(a)+(0:0)$) {$A_0$};
		\node at ($(x)+(0:0)$) {$A_1$};
		\node at ($(b)+(0:0)$) { $A_2$};
		\node at ($(z)+(0:0)$) { $A_3$};
										
	\end{pgfonlayer}
	
	\begin{pgfonlayer}{background}
														
		\draw[cyan, line width=2pt] (4,6) ellipse (100pt and 20pt);
		\fill[blue, opacity=0.05] (4,6) ellipse (100pt and 20pt);
		
		\draw[cyan, line width=2pt] (4,0) ellipse (100pt and 20pt);
		\fill[blue, opacity=0.05] (4,0) ellipse (100pt and 20pt);
		
	\draw[cyan, line width=2pt] (8,3) ellipse (20pt and 90pt);
		\fill[blue, opacity=0.05] (8,3) ellipse (20pt and 90pt);
		
	\draw[cyan, line width=2pt] (0,3) ellipse (20pt and 90pt);
		\fill[blue, opacity=0.05] (0,3) ellipse (20pt and 90pt);
	\end{pgfonlayer} 

\qedge{(y1)}{(y3)}{(y2)}{4.5pt}{1.5pt}{red!70!white}{red!70!white,opacity=0.2};
\qedge{(w1)}{(w3)}{(w2)}{4.5pt}{1.5pt}{red!70!white}{red!70!white,opacity=0.2};
\qedge{(k1)}{(k3)}{(k2)}{4.5pt}{1.5pt}{red!70!white}{red!70!white,opacity=0.2};
\qedge{(e1)}{(e2)}{(e3)}{4.5pt}{1.5pt}{red!70!white}{red!70!white,opacity=0.2};
\qedge{(f1)}{(f3)}{(f2)}{4.5pt}{1.5pt}{red!70!white}{red!70!white,opacity=0.2};
	\end{tikzpicture}
	\caption {Complement of the hypergraph $H$, where the existing kinds of edges are indicated 
	in red, e.g. all triples with $3$ vertices in $A_0$ form an edge in the complement of $H$.}
	\label{fig:Pikhurko}
	\end{figure}
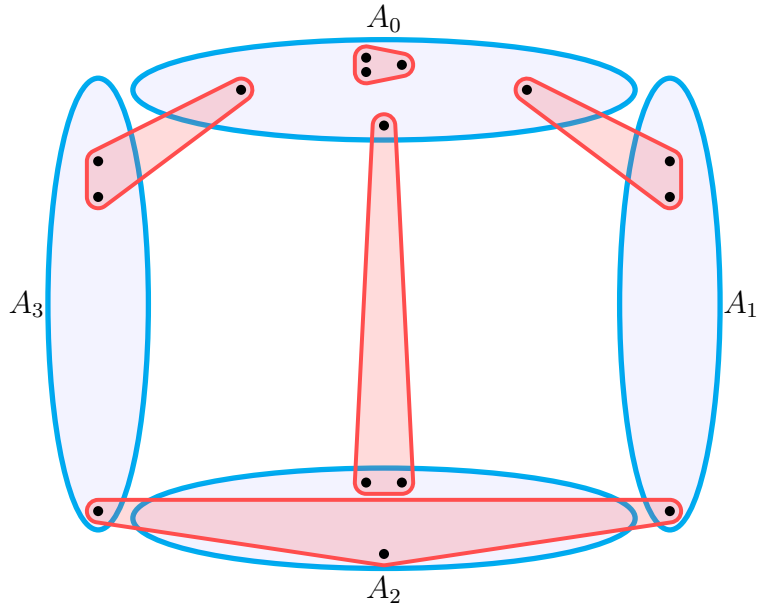	
Every $K_4^{(3)}$ intersecting $A_0$ has exactly $2$ vertices in $A_0$, since $A_0$ spans 
no edge and if a $K_4^{(3)}$ would intersect $A_0$ in only one vertex, then its remaining 
three vertices must come from $A_1$, $A_2$, $A_3$ (one from each set), but three such vertices 
do not form an edge in $H$. A squared Hamiltonian cycle $C\subseteq H$ needs to contain at least 
one $K_4^{(3)}$ that intersects $A_0$, but then each $K_4^{(3)}\subseteq C$ needs to 
intersect $A_0$ in two vertices. This implies $|A_0|\geq n/2$, which contradicts our 
assumption and shows that $H$ is indeed not containing a squared Hamiltonian cycle.

The proof of Theorem \ref{thm:main} is based on the {\it absorption method} developed 
by R{\"o}dl, Ruci{\'n}ski, and Szemer{\'e}di in \cite{3}. 
In Section \ref{sec:idea} we will discuss the general structure of the proof. 

\section{Building squared Hamiltonian Cycles in Hypergraphs}\label{sec:idea}

In this section we will show the outline of the proof of Theorem \ref{thm:main}. 
We start by presenting the dependencies of the auxiliary constants we use in the 
propositions required for this proof. We will sometimes 
exploit that the conclusion of Theorem~\ref{thm:main} is monotone in $\alpha$, i.e., 
that it becomes the stronger the smaller one takes $\alpha$ to be. Thus it will be 
permissible to assume that $\alpha$ is sufficiently small whenever convenient and we 
express this by writing that we may assume $1\gg \alpha$.  
More generally, $a\gg b$ will indicate that $b$ will be assumed to be sufficiently small 
depending on $a$ and all other constants appearing on the left of $b$. 

The connecting lemma stated below plays an important r\^{o}le in the proof of 
Theorem~\ref{thm:main}. Roughly speaking it asserts that any two disjoint triples 
of vertices can be connected by many ``short'' squared paths. 

\begin{prop}[Connecting Lemma]\label{p:cl}
	Given $\alpha>0$, 
	there are an integer $M$ and $\vartheta _*>0$, such that for all sufficiently large
	hypergraphs $H=(V,E)$ with $\delta_2(H)\geq (4/5+\alpha)|V|$ and all disjoint triples 
	$(a,b,c)$ and $(x,y,z)$ with $abc,xyz\in E$ there exists some $m<M$ for which there are at 
	least $\vartheta _* n^m$ squared paths from $abc$ to $xyz$ with $m$ internal vertices.
\end{prop}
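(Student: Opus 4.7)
The strategy is to construct squared walks from $(a,b,c)$ and $(x,y,z)$ simultaneously and meet in the middle. The basic extension estimate is that every edge triple $(u,v,w)$ of $H$ admits
\[
|N(u,v)\cap N(u,w)\cap N(v,w)|\ge 3\delta_2(H)-2n\ge \bigl(\tfrac{2}{5}+3\alpha\bigr)n
\]
vertices $p$ for which $uvwp$ forms a $K_4^{(3)}$, so any squared walk ending at $(u,v,w)$ extends in at least that many ways. Iterating from $(a,b,c)$ for $t$ steps gives at least $\bigl((\tfrac{2}{5}+3\alpha)n\bigr)^{t}$ squared walks. Let $N^+_t(u,v,w)$ denote the number of such walks ending at $(u,v,w)$; then $\sum_{(u,v,w)}N^+_t(u,v,w)\ge \bigl((\tfrac{2}{5}+3\alpha)n\bigr)^{t}$, and trivially $N^+_t(u,v,w)\le n^{t-3}$ for $t\ge 3$ (fixing the end-triple leaves only $t-3$ free coordinates). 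Define $N^-_t$ analogously for walks from $(x,y,z)$ read backwards.

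For $m=2t-3$, the number of squared walks from $(a,b,c)$ to $(x,y,z)$ with $m$ internal vertices equals
\[
\sum_{(u,v,w)}N^+_t(u,v,w)\,N^-_t(u,v,w),
\]
since the middle triple of such a walk is uniquely determined. If one can show that $N^+_t$ and $N^-_t$ are simultaneously $\Omega(n^{t-3})$ on a common set $\cF\cap\cB$ of ordered edge triples of size $\Omega(n^3)$, the sum is $\Omega(n^{2t-3})=\Omega(n^m)$ and the proposition follows with $M=2t$ and some $\vartheta_*>0$ depending only on $\alpha$. Passing from squared walks to squared paths is standard because walks repeating a vertex form a lower-dimensional family of size $O(n^{m-1})$, which is absorbed into $\vartheta_*$.

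The heart of the argument is therefore a \emph{spreading}/\emph{concentration} statement asserting that, for some fixed $t$ depending only on $\alpha$, the value $N^+_t(u,v,w)$ is $\Omega(n^{t-3})$ on a set of density strictly greater than $\tfrac{1}{2}$ among the ordered edge triples of $H$, and likewise for $N^-_t$, so that the two dense sets must overlap in a set of size $\Omega(n^3)$. Pure averaging using only $\sum N^+_t \ge ((\tfrac{2}{5}+3\alpha)n)^t$ and $N^+_t\le n^{t-3}$ yields density at most $(\tfrac{2}{5}+3\alpha)^t<\tfrac{1}{2}$ and is therefore inadequate. My plan is to establish the required concentration by iterating the extension step: each additional extension convolves $N^+_t$ with the arc relation of the auxiliary digraph $D$ on ordered edge triples (with arcs $(u,v,w)\to(v,w,p)$ corresponding to $K_4^{(3)}$'s), and because every vertex of $D$ has in- and out-degree $\ge (\tfrac{2}{5}+3\alpha)n$, repeated convolution should progressively equidistribute the reachability weight over a constant fraction of $D$. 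Turning this heuristic into a rigorous mixing argument — controlling the exceptional triples after a bounded number of rounds, and using the pair-degree hypothesis $\delta_2\ge(\tfrac{4}{5}+\alpha)n$ beyond the naive bounds — is the principal technical obstacle of the proof.
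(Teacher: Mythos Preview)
Your proposal is not a proof: it is an outline that ends precisely where the difficulty begins. You yourself acknowledge that pure averaging gives a reachability density of at most $(\tfrac25+3\alpha)^t<\tfrac12$ on ordered edge triples, so the two ``forward'' and ``backward'' clouds need not intersect, and you then defer the core of the argument to an unspecified ``mixing'' property of the digraph $D$ on ordered edge triples. That step is genuinely hard and not just bookkeeping. The out- and in-degree bound $(\tfrac25+3\alpha)n$ on $D$ is far too weak to force rapid mixing on a state space of size $\Theta(n^3)$; without further structural input one cannot rule out that the walk from $(a,b,c)$ stays trapped in a set of triples of density below $\tfrac12$ for all bounded times. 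Nothing in your outline uses the hypothesis $\delta_2(H)\ge(\tfrac45+\alpha)n$ beyond the single-step extension bound, yet the constant $\tfrac45$ is essential to the connecting lemma (it is what makes the auxiliary graph $G_{vw}$ have minimum degree exceeding half its order, cf.\ Lemma~\ref{lm:B2}). A proof along your lines would need, at minimum, a cut estimate for $D$ analogous to Lemma~\ref{lm:L16}, and it is not at all clear how to obtain one directly.

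The paper sidesteps this obstacle entirely. Instead of analysing a random-walk-like process on $V^3$, it builds several auxiliary graphs on $V$ itself: $G_3$ (pairs of vertices that share many common tetrahedra), $G_v$ (pairs sharing many tetrahedra through a fixed apex $v$), and $G_{vw}$ (the $K_4^{(3)}$-graph on $N(v,w)$). For each of these one proves a minimum-degree bound and, crucially, a cut bound $e(X,Y)\ge\gamma n^2$ for every balanced partition; Lemma~\ref{lm:L16} then converts these into ``many short walks between any two vertices''. The squared walks from $(a,b,c)$ to $(x,y,z)$ are assembled hierarchically: Lemma~\ref{lm:B2} handles connections inside a single $G_{vw}$, Propositions~\ref{pr:1}--\ref{pr:2} and Lemma~\ref{lm:Lc} lift this to squared $v$-walks using $G_v$, and the final proof of Proposition~\ref{pr:CL} strings these together along a walk in $G_3$. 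The point is that the ``spreading'' you need is obtained not by iterating the one-step extension, but by proving connectivity of carefully chosen graphs on $n$ vertices where the pair-degree hypothesis can be exploited through explicit counting (the case analyses in Lemmas~\ref{lm:2} and~\ref{lm:21}). If you want to salvage your approach you would have to supply an analogous cut lemma for $D$, and it is likely easier to follow the paper's reduction to graphs on $V$.
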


We remark that the conclusion of the connecting lemma is monotone in $M$ and $\theta_*$.
Therefore we are free to assume that $M$ is sufficiently large (depending on $\alpha$)
and that $\theta_*$ is as small as we please (depending on $\alpha$ and $M$), i.e., that 
\[
	1\gg \alpha \gg 1/M \gg  \vartheta_* \,.
\]
The proof of the connecting lemma forms the content of Section~\ref{sec:con}. While we can connect 
any two squared paths by the connecting lemma using their start or endtriples, for our 
constructions it will be important that we do not interfere with any already constructed 
subpath. Therefore we put a small {\it reservoir} of vertices aside, such that if we do 
not connect too many times it is possible to use vertices of the reservoir set only. 
The following lemma, which we prove in Section \ref{sec:res}, shows the existence of 
such a set.

\begin{prop}[Reservoir Lemma]\label{p:res}
	Suppose that $1\gg \alpha \gg 1/M\gg \vartheta_*$ are such that the conclusion of the connecting 
	lemma holds and that $H=(V, E)$ is a sufficiently large hypergraph 
	with $|V|=n$ and $\delta_2(H)\geq (4/5+\alpha)n$. Then there exists a reservoir set 
	$\mathcal{R}\subseteq V$ of size $|\mathcal{R}|\leq \vartheta_*^2n$ such that for all 
	$\mathcal{R'}\subseteq \mathcal{R}$ with $|\mathcal{R'}|\leq \vartheta_{*}^4n $ and for 
	all disjoint triples $(a,b,c)$ and $(x,y,z)$ with $abc,xyz\in E$ there exists a connecting 
	squared path in $H$ with less than $M$ internal vertices all of which belong 
	to $\mathcal{R}\setminus \mathcal{R'}$.
\end{prop}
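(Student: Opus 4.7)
The plan is a standard probabilistic construction: obtain $\mathcal{R}$ by retaining each vertex of $V$ independently with probability $p = \vartheta_*^2/2$, and show that with positive probability $\mathcal{R}$ is simultaneously small and rich in short connecting paths for every relevant pair of triples. The slack between $|\mathcal{R}| \leq \vartheta_*^2 n$ and the admissible size $\vartheta_*^4 n$ of any forbidden set $\mathcal{R}'$ will then suffice for a purely deterministic ``avoidance'' argument.

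First I would control $|\mathcal{R}|$ by a one-sided Chernoff bound, giving $|\mathcal{R}| \leq \vartheta_*^2 n$ except on an event of probability $e^{-\Omega(n)}$. Next, for each ordered pair of disjoint triples $(a,b,c),(x,y,z)$ with $abc, xyz \in E$, the Connecting Lemma supplies some $m = m(abc,xyz) < M$ and a family $\mathcal{P}_{abc,xyz}$ of at least $\vartheta_* n^m$ squared paths connecting them with $m$ internal vertices. Writing $X_{abc,xyz}$ for the number of members of $\mathcal{P}_{abc,xyz}$ whose internal vertices all lie in $\mathcal{R}$, one has $\EE[X_{abc,xyz}] \geq \vartheta_*(pn)^m$. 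Because toggling a single vertex's membership alters $X_{abc,xyz}$ by at most $m n^{m-1}$, McDiarmid's bounded-differences inequality yields
\[
	\Pr\Bigl[X_{abc,xyz} < \tfrac{1}{2}\vartheta_*(pn)^m\Bigr] \leq \exp(-\Omega(n)).
\]
Since there are at most $n^6$ choices of the two triples, a union bound leaves positive probability for the event that $|\mathcal{R}| \leq \vartheta_*^2 n$ and $X_{abc,xyz} \geq \tfrac{1}{2}\vartheta_*(pn)^m$ hold simultaneously for all relevant pairs. Fix such an $\mathcal{R}$.

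It remains to observe deterministically that, given any $\mathcal{R}' \subseteq \mathcal{R}$ with $|\mathcal{R}'| \leq \vartheta_*^4 n$ and any admissible pair of triples, some path in $\mathcal{P}_{abc,xyz}$ counted by $X_{abc,xyz}$ avoids $\mathcal{R}'$. A single vertex $v \in \mathcal{R}$ occurs as an internal vertex of at most $m|\mathcal{R}|^{m-1}$ of the paths counted by $X_{abc,xyz}$, so the number of such paths meeting $\mathcal{R}'$ is bounded by
\[
	|\mathcal{R}'| \cdot M \cdot |\mathcal{R}|^{m-1} \leq M\vartheta_*^{2m+2} n^m,
\]
which is strictly smaller than $\tfrac{1}{2}\vartheta_*(pn)^m \geq 2^{-m-1}\vartheta_*^{2m+1} n^m$ provided $M 2^{M+1}\vartheta_* < 1$. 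This last inequality is built into the hierarchy $1 \gg \alpha \gg 1/M \gg \vartheta_*$, so at least one path in $\mathcal{P}_{abc,xyz}$ survives in $\mathcal{R}\setminus\mathcal{R}'$, as required.

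The only delicate point is the concentration step: the indicators ``this particular path lies in $\mathcal{R}$'' are not independent, so Chernoff is not directly available, but the bounded-differences viewpoint is clean and sharp enough here. Each vertex affects only $O(n^{m-1})$ of the counted paths, which keeps the Lipschitz constants small enough to push McDiarmid's inequality through and deliver an $e^{-\Omega(n)}$ tail bound that comfortably absorbs the $n^6$ union bound. All remaining estimates are bookkeeping within the constant hierarchy.
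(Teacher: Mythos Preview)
Your proposal is correct and follows essentially the same route as the paper: a random reservoir with independent inclusion probability of order $\vartheta_*^2$, Chernoff for the size, Azuma--Hoeffding/McDiarmid for concentration of the path counts, a union bound over the $n^6$ pairs of triples, and the same deterministic counting argument to avoid~$\mathcal{R}'$. The only cosmetic differences are the precise value of $p$ and that the paper also records a lower bound $|\mathcal{R}|\ge \vartheta_*^2 n/2$ so as to phrase the path count as $\vartheta_*|\mathcal{R}|^m/2$, whereas you compare directly with $(pn)^m$.
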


Moreover, we put aside an absorbing path $P_A$, which will absorb an arbitrary but not too 
large set $X$ of leftover vertices at the end of the proof, such that we get a squared 
Hamiltonian cycle.
 
\begin{prop}[Absorbing path] \label{p:abs}
	Let $1\gg \alpha \gg 1/M \gg \vartheta_*$ be such that the conclusion of the 
	connecting lemma holds, let $H=(V,E)$ be a sufficiently 
	large hypergraph with $|V|=n$ and $\delta_2(H)\geq (4/5+\alpha)n$,
	and let $\cR\subseteq V$ be a reservoir set as provided by Proposition~\ref{p:res}.
	There exists an (absorbing) squared path $P_A\subseteq H-\mathcal{R}$ such that 
	\begin{enumerate}
	\item  $|V(P_A)|\leq \vartheta_*n$,
	\item  and for every set $X\subseteq V\setminus V(P_A)$ with $|X|\leq 2\vartheta_*^2n$ there 
		is a squared path in $H$ whose set of vertices is $V(P_A)\cup X$ and whose end-triples
		are the same as those of~$P_A$.
	\end{enumerate}
\end{prop}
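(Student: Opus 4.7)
The plan is to adapt the absorbing method of Rödl, Rucinski, and Szemerédi to the squared setting. The absorbing path $P_A$ is assembled from many small \emph{absorber gadgets}, each capable of individually ingesting a prescribed extra vertex without altering its end-triples. These gadgets are first produced in bulk by a probabilistic argument, welded together via Proposition~\ref{p:res}, and finally combined with the target set $X$ through a Hall-type matching.

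The first task is to design the gadget. Fix a small constant $t$; for each $v\in V$, call an ordered $t$-tuple $\vec u=(u_1,\dots,u_t)$ of distinct vertices an \emph{absorber for $v$} whenever there exist squared paths $A(\vec u)$ on $\{u_1,\dots,u_t\}$ and $A^+(\vec u,v)$ on $\{u_1,\dots,u_t,v\}$ sharing the same end-triples. A natural template is a short squared path into which $v$ can be inserted between two consecutive $u_i$, shaped so that only a small constant number of the edges of $A^+(\vec u,v)$ actually involve~$v$. Combining the pair-degree hypothesis $\delta_2(H)\ge(4/5+\alpha)n$ with iterated pigeonholing over common pair-neighbourhoods then yields a supersaturation estimate of the form
\[
  \bigl|\{\vec u\in (V\setminus\cR)^{t}\colon \vec u\text{ absorbs }v\}\bigr|\ge c_0\,n^{t}\qquad\text{for every }v\in V,
\]
with some $c_0=c_0(\alpha)>0$. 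This step contains the principal obstacle of the proof: the gadget must impose few enough edge constraints on $v$ that the relevant neighbourhoods retain linear size at the threshold $4n/5$, which given $\delta_2\ge(4/5+\alpha)n$ means each of these constraints on $v$ can cost at most about $n/5$ of the available vertices, so the inclusion--exclusion bookkeeping has to be tight.

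Next, I would select a random family $\cF$ of candidate $t$-tuples in $V\setminus\cR$ by independent Bernoulli sampling with carefully tuned probability, and then delete any pair of sampled tuples that overlap, a lower-order loss. The probability is chosen so that, with positive probability, the resulting pairwise disjoint family $\cF^\star=\{A_1,\dots,A_K\}$ is small enough to fit, together with the required connectors, inside the target length $\vartheta_* n$ without exceeding the reservoir budget~$\vartheta_*^{4}n$, while at the same time every $v\in V$ is absorbed by enough members of $\cF^\star$ to support the matching in the final step. Growing $P_A$ by starting from $A_1$ and iteratively attaching $A_{i+1}$ via Proposition~\ref{p:res} — through a connector of fewer than $M$ reservoir vertices disjoint from all previous connectors — yields an absorbing squared path with $|V(P_A)|\le\vartheta_* n$, establishing~(1). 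The calibration of these parameters is delicate and rests on the scale separation $1\gg\alpha\gg 1/M\gg\vartheta_*$.

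For property~(2), fix any $X\subseteq V\setminus V(P_A)$ with $|X|\le 2\vartheta_*^2 n$ and consider the bipartite graph between $X$ and $\cF^\star$ encoding the absorbing relation. The preceding step is arranged so that this graph satisfies Hall's condition, whence there is a system of distinct representatives $x\mapsto A_x$. Replacing each chosen segment $A_x$ inside $P_A$ by its augmented form $A^+(\vec u_x,x)$ preserves the end-triples of $A_x$ and hence those of every flanking subpath of $P_A$; the result is a squared path whose vertex set is exactly $V(P_A)\cup X$ and whose end-triples coincide with those of $P_A$, as required.
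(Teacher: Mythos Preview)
Your overall architecture matches the paper's: a $6$-tuple $(a,b,c,d,e,f)$ with both $abcdef$ and $abcvdef$ squared paths serves as the $v$-absorber, the supersaturation count $c_0 n^6$ with $c_0=\alpha^3$ goes through (the five pair-degree constraints on each of $d,e,f$ are precisely where the threshold $4/5$ is tight, yielding $\ge 5\alpha n$ choices apiece), and random sampling with overlap-deletion produces the disjoint family~$\cF^\star$.

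There is, however, a genuine gap in the connection step. You propose to weld the absorbers together \emph{through the reservoir} via Proposition~\ref{p:res}, but the conclusion demands $P_A\subseteq H-\cR$, so $P_A$ may not touch $\cR$ at all. Worse, your two calibration requirements are incompatible: to absorb every $X$ with $|X|\le 2\vartheta_*^2 n$ one needs $K=|\cF^\star|\ge 2\vartheta_*^2 n$ (each gadget swallows a single vertex, and no injection $X\hookrightarrow\cF^\star$ can exist if $K<|X|$, so Hall cannot help), whereas respecting the reservoir budget forces $MK\le \vartheta_*^4 n$, i.e.\ $K\le \vartheta_*^4 n/M\ll \vartheta_*^2 n$. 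The paper resolves this by connecting via the Connecting Lemma (Proposition~\ref{p:cl}) directly in $V\setminus\cR$: the partial path together with $\cR$ occupies at most $2\vartheta_*^{3/2}n\le \vartheta_* n/(2M)$ vertices, so fewer than half of the $\vartheta_* n^m$ connecting $m$-tuples can meet this set, and a connector avoiding both $\cR$ and the path built so far always survives. With this amendment --- and replacing the Hall argument by the simpler greedy absorption, which works because every $v$ retains at least $2\vartheta_*^2 n\ge |X|$ absorbers in~$\cF^\star$ --- your proof goes through.
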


In Section~\ref{sec:abs} we prove Proposition \ref{p:abs} and in Section \ref{sec:cyc} we 
will show the following result.

\begin{prop}\label{p:alg}
	Given $\alpha, \mu >0 $ and $Q\in \NN$ there exists $n_0\in \NN$ such that 
	in every hypergraph~$H$ with $v(H)=n\geq n_0$ and $\delta_2(H)\geq (3/4+\alpha)n$ all but 
	at most $\mu n$ vertices of~$H$ can be covered by vertex-disjoint squared paths with $Q$ 	
	vertices.
\end{prop}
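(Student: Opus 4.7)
The threshold $3/4+\alpha$ matches Pikhurko's threshold for $K_4^{(3)}$-tilings, and this is forced: a squared path on $Q$ vertices decomposes into $\lfloor Q/4\rfloor$ vertex-disjoint copies of $K_4^{(3)}$, so a covering of all but $\mu n$ vertices of $H$ by $Q$-vertex squared paths yields a $K_4^{(3)}$-tiling of comparable quality. My plan is therefore to reduce the proposition to Pikhurko's theorem via the weak $3$-graph regularity lemma and then produce the squared paths locally inside each regular tetrad of clusters.

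First, I would apply the weak hypergraph regularity lemma with auxiliary parameters $\varepsilon \ll d \ll \alpha,\mu,1/Q$ to obtain an equitable partition $V(H)=V_0\dcup V_1\dcup \cdots \dcup V_t$ with $|V_0|\leq \varepsilon n$, $|V_1|=\cdots=|V_t|=m$, and $t$ bounded in terms of $\varepsilon$. Form the reduced hypergraph $R$ on $[t]$ by declaring $\{i,j,k\}\in E(R)$ exactly when the tripartite triple $(V_i,V_j,V_k)$ is $\varepsilon$-regular of density at least~$d$. A standard averaging argument transferring the pair-degree from $H$ to $R$ gives $\delta_2(R)\geq (3/4+\alpha/2)t$ provided $\varepsilon$ and $d$ are small enough compared with $\alpha$.

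Next, I would invoke Pikhurko's theorem (or its Keevash--Mycroft refinement) on $R$ to obtain a $K_4^{(3)}$-matching $\mathcal{M}$ in $R$ covering all but at most $14$ vertices of $[t]$. The corresponding tetrads of clusters cover all but $(\varepsilon+O(1/t))n\leq \mu n/2$ vertices of $H$.

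The main and final step is local. For each tetrad $(W_1,W_2,W_3,W_4)$ from $\mathcal{M}$, whose four cluster-triples are $\varepsilon$-regular of density at least~$d$, I want to cover all but $\mu m/2$ of the vertices of $W_1\cup W_2\cup W_3\cup W_4$ by vertex-disjoint squared paths on exactly $Q$ vertices. A direct counting lemma, obtained by iterating the definition of $\varepsilon$-regularity, shows that the number of squared paths on $Q$ vertices lying in $W_1\cup W_2\cup W_3\cup W_4$ is at least $c\,m^{Q}$ for some $c=c(d,Q)>0$; intuitively, given a partial squared path on $k<Q$ vertices, its last three vertices straddle some three of the four clusters, whose common neighborhood in the fourth cluster has size $(d\pm \varepsilon)m$ for almost every such triple, allowing a one-by-one extension. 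I would peel off squared paths one at a time; each deletion removes at most $Q$ vertices from the tetrad, perturbing the densities of the four cluster-triples by only $O(Q/m)$, so the counting lower bound remains valid as long as more than $\mu m/2$ vertices of the tetrad remain. Summing over all tetrads of $\mathcal{M}$ yields a vertex-disjoint family of $Q$-vertex squared paths covering all but at most $\mu n$ vertices of $H$. I expect the local greedy step to be the main obstacle, as it requires that $\varepsilon$-regularity of each cluster-triple persist throughout the peeling; this is handled by the standard observation that removing an $o(m)$-fraction of vertices from a cluster preserves $\varepsilon$-regularity up to an $O(\varepsilon)$ perturbation.
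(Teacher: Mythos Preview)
Your overall architecture---regularise, transfer the pair-degree condition to a reduced hypergraph, tile the reduced hypergraph by tetrahedra via Pikhurko, and then cover each tetrad locally by squared paths---is exactly the paper's plan. The genuine gap is in the local step: the \emph{weak} hypergraph regularity lemma cannot support the counting you need. Weak $\varepsilon$-regularity only controls edge counts on product sets $X'\times Y'\times Z'$ with $X',Y',Z'$ subsets of the clusters; it gives no control over the joint neighbourhood of three specified vertices $x,y,z$ in a fourth cluster. Your extension argument requires, for a typical edge $xyz$, many $w$ with $xyw,xzw,yzw\in E(H)$---that is, it requires counting copies of $K_4^{(3)}$, and there are well-known weakly quasirandom $3$-graphs with the wrong $K_4^{(3)}$-count (indeed with none at all). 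Since a squared path on $Q\ge 4$ vertices already contains a tetrahedron, no counting lemma for weak regularity applies here, and the greedy peeling cannot get started.

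The paper fixes this by using the \emph{refined} regularity lemma of R\"odl--Schacht (Lemma~\ref{thm:regularity}), which partitions the pairs as well as the vertices and measures $(\delta_3,d_3)$-quasirandomness of $H$ relative to triads of bipartite graphs. This stronger notion does support an embedding lemma for squared paths (Lemma~\ref{lm:alang}), and an iterated application (Lemma~\ref{lm:67}) yields the almost-spanning cover inside each tetrad. A secondary consequence is that the reduced hypergraph $J_f$ produced this way no longer has high \emph{minimum} pair-degree---only almost all pairs are good---so the paper also needs a robust version of Pikhurko's theorem tolerating $\tau t^2$ bad pairs (Lemma~\ref{lm:61}) rather than the original statement you invoke.
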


We conclude this section by proving that those four propositions do indeed imply Theorem~\ref{thm:main}.

\begin{proof}[Proof of Theorem~\ref{thm:main} based on Propositions~\ref{p:cl}--\ref{p:alg}]
	As already mentioned, we may assume that the given number $\alpha>0$ is sufficiently 
	small. Pick appropriate constants
	\[
		1\gg \alpha\gg 1/m\gg \theta_*\gg 1/n_0
	\]
	and let $H=(V, E)$ be a hypergraph with $|V|=n\ge n_0$ as well as $\delta_2(H)\ge (4/5+\alpha)n$.
	The reservoir lemma yields a reservoir set $\cR\subseteq V$ and then Proposition~\ref{p:abs}
	delivers an absorbing path $P_A\subseteq H-\cR$. The hypergraph $H'=H-(P_A\cup \mathcal{R})$
	satisfies 
	\[
		\delta_2(H')\ge \delta_2(H)-(|P_A|+\cR) \ge (4/5+\alpha-\theta_*-\theta_*^2)n 
		\ge  (4/5+\alpha/2)|V(H')|\,.
	\]
	
	So by Proposition~\ref{p:alg} applied with $\alpha/2$ here in place of $\alpha$ there, 
	with $\mu=\vartheta_*^2$ and with some $Q\geq M\vartheta _*^{-4}$ divisible by 4,
	there exists a family $\ccW$ of less than $n/Q$ disjoint squared paths in $H'$ the union 
	of which misses at most $\vartheta_*^2n$ vertices. 
	 
	Now we want to form a large squared cycle $\ccC$ by connecting the squared paths 
	in $\ccW\cup\{P_A\}$ through the reservoir. This is accomplished by $|\ccW|+1$
	successive applications of Proposition~\ref{p:res}. To see that this is possible 
	we note that even when the last connection is to be made, at most 
	$M|\ccW|\le Mn/Q\leq \vartheta_*^4 n$ vertices from the reservoir have already been used. 
	
	The vertices which are not in $\ccC$ are either unused vertices from the reservoir or they 
	were in $H'$ but not on any squared path in $\ccW$. Hence the set $X=V\setminus V(\ccC)$ satisfies 
	$|X|\le |\cR|+\vartheta_*^2n\le 2\vartheta_*^2n$. By Proposition~\ref{p:abs} it follows that 
	there exists a squared path $P^*$ in $H$ having the same end-triples as $P_A$ and whose set 
	of vertices is $V(P_A)\cup X$. Replacing $P_A$ by $P^*$ in $\ccC$ we obtain the desired 
	squared Hamiltonian cycle in $H$.  
\end{proof}

\section{Connecting Lemma}  \label{sec:con}

We will show some of our results with the constant $\frac{3}{4}$ and others for $\frac{4}{5}$.
Moreover we fix the auxiliary constants $\beta , \gamma , \vartheta_*$ and 
integers $K, \ell, M \in \NN$ obeying the hierarchy
\[
	1\gg \alpha \gg \beta , \gamma , 1/\ell \gg 1/K \gg 1/M \gg \vartheta_* \gg 1/n \,.
\]

\subsection{Connecting properties}
We prove that the graph properties stated in the following lemma imply a connecting property 
and use this lemma later to show that some auxiliary graphs $G_3$ and $G_v$ have this 
connecting property.

\begin{lemma}\label{lm:L16}
	Let $\gamma \leq 1/16$ and let $G=(V,E)$ with $|V|=n$ be a graph with 
	$\delta (G)\geq \sqrt {\gamma }n$ such that for every partition $X\dcup Y=V$ 
	of the vertex set with $|X|,|Y|\geq \sqrt{\gamma }n$ we have 
	$e_G(X,Y)\geq \gamma n^2$. 

	Then for every pair of distinct vertices $x,y\in V(G)$ there exists some 
	$s=s(x,y)\leq 4/\gamma$ for which there are at least $\Omega (n^{s-1})$ many
	$x$-$y$-walks of length $s$.
\end{lemma}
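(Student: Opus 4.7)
For vertices $u, v \in V$ and $k \geq 1$, let $W_k(u, v)$ denote the number of walks of length $k$ from $u$ to $v$ in $G$. My plan is to iteratively apply the cut hypothesis to show that many vertices $v$ have many walks of some length $k \leq O(1/\gamma)$ from $x$, and similarly from $y$; then use the walk-concatenation identity $W_{k+k'}(x, y) = \sum_v W_k(x, v) W_{k'}(v, y)$ to produce $\Omega(n^{s-1})$ walks of some length $s \leq 4/\gamma$ between $x$ and $y$. Concretely, I define level sets
\[
U_k(u) = \{v \in V : W_k(u, v) \geq \epsilon_k n^{k-1}\}, \qquad V_K(u) = \bigcup_{k=1}^{K} U_k(u),
\]
for a decreasing sequence $\epsilon_1 > \epsilon_2 > \cdots > 0$ of constants depending only on $\gamma$ and a target $K \leq 2/\gamma$ to be fixed below.

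\textbf{Growth via edge-expansion.} Starting from $|V_1(x)| = |N(x)| \geq \sqrt{\gamma}n$ (taking $\epsilon_1 = 1$), I argue inductively as follows. Suppose $|V_K(x)| \leq (1 - \sqrt{\gamma})n$. Since $V_K(x) \supseteq N(x)$ has size $\geq \sqrt{\gamma}n$ and its complement also has size $\geq \sqrt{\gamma}n$, the cut hypothesis yields $e(V_K(x), V \setminus V_K(x)) \geq \gamma n^2$. A standard averaging gives at least $\gamma n/2$ vertices $v \in V \setminus V_K(x)$ with $|N(v) \cap V_K(x)| \geq \gamma n/2$. For each such $v$, pigeonhole produces a level $k^\star = k^\star(v) \leq K$ with $|N(v) \cap U_{k^\star}(x)| \geq \gamma n/(2K)$, and extending the walks from $x$ by one step,
\[
W_{k^\star+1}(x, v) \geq |N(v) \cap U_{k^\star}(x)| \cdot \epsilon_{k^\star} n^{k^\star - 1} \geq \frac{\gamma}{2K} \, \epsilon_{k^\star} n^{k^\star},
\]
so $v \in U_{k^\star + 1}(x) \subseteq V_{K+1}(x)$ provided $\epsilon_{k+1} \leq \epsilon_k \gamma/(2K)$ for all $k \leq K$. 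Consequently $|V_{K+1}(x)| \geq |V_K(x)| + \gamma n/2$, and within $K \leq 2/\gamma$ iterations the set $V_K(x)$ saturates at size $\geq (1 - \sqrt{\gamma})n$.

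\textbf{Concatenation and main difficulty.} Repeating the construction from $y$ gives some $K' \leq 2/\gamma$ with $|V_{K'}(y)| \geq (1 - \sqrt{\gamma})n$; hence $V_K(x) \cap V_{K'}(y)$ has size $\geq (1 - 2\sqrt{\gamma})n = \Omega(n)$. A final pigeonhole over the pairs $(k, k') \in [K] \times [K']$ finds $(k^\diamond, k'^\diamond)$ and a common set of $\Omega(n)$ vertices $v$ with $v \in U_{k^\diamond}(x) \cap U_{k'^\diamond}(y)$. Summing the walk-decomposition identity $W_{k^\diamond + k'^\diamond}(x, y) = \sum_v W_{k^\diamond}(x, v) \, W_{k'^\diamond}(v, y)$ over these vertices yields $W_s(x, y) \geq \Omega(n) \cdot \epsilon_{k^\diamond} \epsilon_{k'^\diamond} n^{s - 2} = \Omega(n^{s-1})$ for $s := k^\diamond + k'^\diamond \leq 4/\gamma$, as required. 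The principal obstacle is that the individual level sets $U_k(x)$ are not automatically monotone in $k$: a vertex in $U_k(x)$ need not persist into $U_{k+1}(x)$ because the threshold $\epsilon_k n^{k-1}$ scales up by a factor of $n$ per step whereas walk-extension from a given endpoint only scales by $\deg$. Tracking the cumulative union $V_K(x)$ sidesteps this obstruction, at the cost of the extra pigeonhole over $k \leq K$ in both the growth step and the final concatenation; since $K = O(1/\gamma)$, the cumulative loss of factors $\gamma/(2K)$ in the $\epsilon_k$'s produces only a $\gamma$-dependent constant, so the $\Omega(n^{s-1})$ conclusion survives.
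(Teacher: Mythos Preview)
Your proposal is correct and follows essentially the same route as the paper's proof. Your cumulative sets $V_K(u)=\bigcup_{k\le K}U_k(u)$ coincide with the paper's sets $Z_u^K$, your expansion step via the cut hypothesis and pigeonhole over walk lengths matches theirs (their threshold $(\gamma^2/4)^s$ is just your recursion $\epsilon_{k+1}\le \epsilon_k\gamma/(2K)$ specialised using $1/K\ge\gamma/2$), and the concatenation-plus-pigeonhole ending is identical.
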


\begin{proof}
	For an arbitrary vertex $x\in V$ and  an integer $i\geq 1$ we define
		\[
		Z_x^i=\{ z \in V\colon \text{ there are at least } (\gamma^2/4)^s n^{s-1} 
			\text{ $x$-$z$-walks of length }s \text{ in } G \text{  for some }s\leq i\}\,.
	\]
		For $i\geq 2$ we have $Z_x^i \supseteq Z_x^{i-1}$ and therefore
	\[
		|Z_x^i|\geq |Z_x^1|= |N_G(x)|\geq \delta (G) \geq \sqrt{\gamma }n\, .
	\]

	Now we show that for every integer $i$ with $1\leq i\leq 2/\gamma $ at least one of 
	the following holds:
		\begin{equation}\label{eq:000}
		|V\setminus Z_x^i|<\sqrt{\gamma }n 
		\text{ \qquad or \qquad } 
		|Z_x^{i+1}\setminus Z_x^i|\geq \dfrac{\gamma n}{2}\,.
	\end{equation}
 		If $|V\setminus Z_x^i|\geq \sqrt{\gamma }n$, then the assumption yields that 
		\[
		e_G(Z_x^i,V\setminus Z_x^i)\geq \gamma n^2\,.
	\]
		This implies that at least $\gamma n/2$ vertices in $V\setminus Z_x^i$ have at least 
	$\gamma n/2$ neighbours in $Z_x^i$. For such a vertex $u\in V\setminus Z_x^i$ at least 
	a proportion of $1/i\geq \gamma/2$ of its neighbours in $Z_x^i$ is connected to~$x$ by 
	walks of the same length, which implies $u\in Z_x^{i+1}$. As this argument applies to 
	$\gamma n/2$ vertices outside $Z_x^i$ we thus obtain 
	$|Z_x^{i+1}\setminus Z_x^i|\geq \gamma n/2$, which concludes the proof of~\eqref{eq:000}.

 	It is not possible that the right outcome of \eqref{eq:000} holds for each positive 
	$i\leq 2/ \gamma$. Therefore we have 
	$|V\setminus Z_x^j|<\sqrt{\gamma }n$ for $j=\lfloor 2/\gamma \rfloor$.
	So for $x,y\in V$ at least $n-2\sqrt{\gamma }n\ge n/2$ vertices~$z$ are contained in the
	intersection $Z_x^j\cap Z_y^j$. For each $z \in Z_x^j\cap Z_y^j$ we get constants 
	$s_1,s_2\leq j\leq 2/\gamma $ such that there are at least $(\gamma^2/4)^{s_1}n^{s_1-1}$ 
	$x$-$z$-walks of length $s_1$ and there are at least $(\gamma^2/4)^{s_2}n^{s_2-1}$ 
	$z$-$y$-walks of length $s_2$. Therefore, for $s_z=s_1+s_2\geq 2$ there are at least 
	$(\gamma^2/4)^{s_z}n^{s_z-2}$ $x$-$y$-walks of length $s_z$ passing through $z$. 
	
	There are at least $n/2$ vertices this argument applies to and by the box principle
	at least $\frac{n}{2}/ \frac{4}{\gamma^2}$ of them give rise to the same 
	pair~$(s_1, s_2)$ and, consequently, the same value of $s_z$. Moreover, the walks 
	obtained for those vertices are distinct and hence for some~$s(x,y)\in [2, 4/\gamma]$ 
	there are at least
	\[
		(\gamma^2n/8) \cdot (\gamma^2/4)^{s(x,y)}n^{s(x,y)-2}
		\geq \tfrac 12(\gamma ^2/4)^{4/\gamma+1}n^{s(x,y)-1}
	\]
		$x$-$y$-walks of length $s(x,y)$.
\end{proof}

\subsection{The auxiliary graph \texorpdfstring{$G_3$}{}}
The first auxiliary graph we will study is the following.
\begin{dfn}\label{dfn:G3}
	For a 3-uniform hypergraph $H=(V,E)$ we define the auxiliary graph~$G_3$ 
	(see Fig.~\ref{fig:G3}) as the graph 
	with vertex set $V(G_3)=V$ and 
		\[
		xy\in E(G_3) \Longleftrightarrow x\neq y \text{ and } 
		\# \{(a,b,c)\in V^3\colon abcx \text{ and } abcy \text{ are } K_4^{(3)}  \}
		\geq \beta n^3 \, .
	\]
	\end{dfn}

Given a vertex $x$ of a hypergraph $H$ we denote its {\it link graph} by $L_x$. This is the 
graph with $V(L_x)=V(H)$ in which a pair $ab$ forms an edge if and only if $xab\in E(H)$.

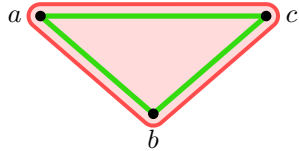
\begin{figure}[ht]
\begin{tikzpicture}[scale=1]
	
	\coordinate (x3) at (2.5,0.8);
	\coordinate (x4) at (1.4,3.4);
	
	\coordinate (z) at (4,-0.5);

	\coordinate (y3) at (5.5,0.8);
	\coordinate (y4) at (6.6,3.4);
   
	\begin{pgfonlayer}{front}
	
		\fill (z) circle (2pt);
		
		\foreach \i in {3,4}{
			\fill  (x\i) circle (2pt);
			\fill  (y\i) circle (2pt);
		}

		\node at ($(x3)+(180:0.34)$) {\footnotesize $a$};
		\node at ($(x4)+(180:0.34)$) {\footnotesize $x$};
		\node at ($(z)+(270:0.34)$) {\footnotesize $b$};
		\node at ($(y3)+(0:0.34)$) {\footnotesize $c$};
		\node at ($(y4)+(0:0.34)$) {\footnotesize $y$};
						
	\end{pgfonlayer}
	
	\begin{pgfonlayer}{background}
		\draw[green, line width=2pt] (x3) -- (z);
		\draw[green, line width=2pt] (z) -- (y3);
		\draw[green, line width=2pt] (y3) -- (x3);
		\draw[orange, line width=2pt, dotted] (x4) -- (y4);
	\end{pgfonlayer}

\qedge{(x3)}{(y3)}{(z)}{4.5pt}{1.5pt}{red!70!white}{red!70!white,opacity=0.2};

	\end{tikzpicture}
	\caption {We have an edge {\textcolor{orange}{$xy\in E(G_3)$} iff there are ``many'' 
	edges \textcolor{red}{$abc\in E(H)$} for which 
	\textcolor{green}{$ab,ac,bc\in E(L_x)\cap E(L_y)$}.}}
	\label{fig:G3}
\end{figure}

The main result of this subsection is the following proposition.

\begin{prop}\label{prop:033}
	Given $\alpha>0$ there exist $n_0, \ell \in \NN$ such that in every hypergraph~$H$
	with $v(H)=n\geq n_0$ and $\delta_2(H)\geq (3/4+\alpha )n$ for every pair of distinct 
	vertices $x,y\in V(G)$ there exists some $t=t(x,y)\leq \ell$ for which there are 
	at least $\Omega(n^{t-1})$  $x$-$y$-walks of length $t$ in $G_3$.
\end{prop}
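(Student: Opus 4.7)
The plan is to deduce Proposition~\ref{prop:033} from Lemma~\ref{lm:L16} applied to $G=G_3$, with $\gamma$ chosen small enough (so that $\alpha \gg \beta,\gamma \gg 1/\ell$) and $\ell := \lceil 4/\gamma\rceil$. The two hypotheses to verify are $\delta(G_3)\geq \sqrt{\gamma}\,n$ and the cross-cut bound $e_{G_3}(X,Y) \geq \gamma n^2$ for every partition $V = X \dcup Y$ with $|X|, |Y| \geq \sqrt{\gamma}\, n$. Both of these reduce to counting pairs of $K_4^{(3)}$'s in~$H$ that share a common triangular face.

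The basic quantitative input is that inclusion--exclusion over the three pair-neighbourhoods $N_H(a,b), N_H(a,c), N_H(b,c)$ of any edge $\{a,b,c\}\in E(H)$ gives a common neighbourhood of size $|N^*(abc)| \geq (1/4+3\alpha)n - O(1)$. From this it follows that every vertex~$x$ lies in at least $T_x \geq c_T(\alpha)\, n^3$ copies of $K_4^{(3)}$: the link graph $L_x$ has $\geq (3/8+\alpha/2) n^2$ edges, and each edge $\{a,b\} \in E(L_x)$ extends to at least $(1/4+3\alpha)n - O(1)$ triangles $\{a,b,c\}$ with $abc \in E(H)$. Now I would double-count pairs $(\{a,b,c\},y)$ such that both $abcx$ and $abcy$ are $K_4^{(3)}$'s: each of the $T_x$ tetrahedra through~$x$ extends through its opposite face to $\geq (1/4+3\alpha)n - O(1)$ further vertices~$y$, producing $\Omega_\alpha(n^4)$ pairs. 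Since any single $y$ absorbs at most $T_x = O(n^3)$ of them, at least $\Omega_\alpha(n) \geq \sqrt{\gamma}\, n$ vertices~$y$ meet the threshold $c(x,y) \geq \beta n^3$, confirming the minimum-degree condition.

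For the cross-cut bound, set $\xi(abc):= |N^*(abc)\cap X|$, $\eta(abc) := |N^*(abc)\cap Y|$, so $\xi+\eta \geq (1/4+3\alpha)n$. The analogous double count now gives
\[
\sum_{(x,y)\in X\times Y} c(x,y) \;=\; 6\sum_{abc\in E(H)} \xi(abc)\eta(abc),
\]
while bucketing pairs according to membership in $E(G_3)$ gives the upper bound $e_{G_3}(X,Y)\cdot n^3 + |X||Y|\,\beta n^3$. It therefore suffices to show $\sum\xi\eta \geq \Omega_\alpha(n^4)$. In the easy regime $\min(|X|,|Y|) \leq \tfrac12(1/4+3\alpha)n$ (say $|X|$), the pointwise estimate $\xi \leq |X| \leq d(abc)/2$ forces $\eta \geq d(abc)/2 \geq (1/8+3\alpha/2)n$, and combining with $\sum_{abc}\xi = \sum_{x\in X}T_x \geq c_T|X|n^3$ yields the desired $\Omega_\alpha(|X|n^4)$.

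The main obstacle is the balanced regime where both $|X|, |Y|$ exceed $(1/4+3\alpha)n$; there $N^*(abc)$ may sit entirely on one side and the pointwise bound collapses. I would handle it by writing $\sum\xi\eta = \sum\xi\, d - \sum\xi^2$ and bounding the second moment $\sum \xi^2 = \sum_{x,x' \in X} c'(x,x')$ from above via the same codegree double-count, but restricted to pairs in $X$, which shows that typically $c'(x,x') = O_\alpha(n^3)$ and hence $\sum \xi^2 = O_\alpha(|X|^2 n^3)$. Combined with $\sum \xi\,d \geq (1/4+3\alpha)n \sum \xi \geq c_T (1/4+3\alpha) |X| n^4$, this recovers $\sum\xi\eta = \Omega_\alpha(|X||Y|n^3)$. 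The hierarchy $\beta \ll \gamma \ll \alpha$ absorbs the $\beta n^5$ slack and produces $e_{G_3}(X,Y) \geq \gamma n^2$, and Lemma~\ref{lm:L16} then delivers, for every pair $x\neq y$, some $t(x,y) \leq 4/\gamma$ for which $G_3$ contains $\Omega(n^{t(x,y)-1})$ walks of length $t(x,y)$ between~$x$ and~$y$.
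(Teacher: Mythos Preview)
Your overall architecture matches the paper: apply Lemma~\ref{lm:L16} to $G_3$, verify the minimum-degree hypothesis by double-counting tetrahedra sharing a face (this is Lemma~\ref{lm:1}), and verify the cross-cut hypothesis. For unbalanced partitions the paper does something even simpler than your $\xi\eta$ bound: once $\delta(G_3)\ge (1/4+\alpha)n$ is known, the estimate $e_{G_3}(X,Y)\ge \delta(G_3)|X|-|X|^2$ handles every partition with $|X|\le (1/4+\alpha/2)n$ directly.

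The genuine gap is in your treatment of the balanced regime. Writing $\sum_{abc}\xi\eta=\sum\xi d-\sum\xi^2$ and bounding $\sum\xi^2=\sum_{x,x'\in X}c'(x,x')=O_\alpha(|X|^2n^3)$ is correct but not strong enough. Concretely, with $\delta_2(H)\ge (3/4+\alpha)n$ one only gets $T_x\ge n^3/32$, so $\sum\xi d\ge (1/4)n\cdot (1/32)|X|n^3=|X|n^4/128$, while the trivial bound $c'(x,x')\le \binom{n}{3}$ gives $\sum\xi^2\le |X|^2n^3/6$. For $|X|=n/2$ the difference $\sum\xi d-\sum\xi^2$ is of order $n^5/256-n^5/24<0$, so the identity yields nothing. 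There is no ``same codegree double-count restricted to $X$'' that improves the constant in $\sum\xi^2$ below what $\sum\xi d$ can see: the obstacle is that $N^*(abc)$ really may sit almost entirely inside~$X$ for a positive proportion of edges $abc$, and a pure first/second moment over $\xi$ cannot rule this out.

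The paper's Lemma~\ref{lm:2} takes a different route for $|X|,|Y|\ge (1/4+\alpha/2)n$. It first locates $\Omega_\alpha(n^4)$ tetrahedra that straddle the partition in a prescribed pattern (either $X^2\times Y^2$ or $X^3\times Y$), and then, for each such tetrahedron $\{x,x',y,y'\}$, uses an averaging argument over a weight function $f(z)$ on a \emph{fifth} vertex~$z$ to find $\Omega_\alpha(n)$ vertices $z$ for which two of the faces through $z$ are tetrahedra---one with apex in $X$ and one with apex in $Y$. This produces $\Omega_\alpha(n^5)$ quintuples that witness $G_3$-edges across the cut, and dividing out the $\beta n^3$ threshold gives $e_{G_3}(X,Y)\ge \gamma n^2$. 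The extra vertex~$z$ is the missing idea: it manufactures the common face between an $X$-apex tetrahedron and a $Y$-apex tetrahedron, something your moment identity over the fixed face $abc$ cannot do.
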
 

The next lemma gives us a lower bound on the minimum degree of $G_3$.

\begin{lemma}\label{lm:1}
	If $\alpha\gg n^{-1}$ and $H$ is a hypergraph on $n$ vertices with 
	$\delta_2(H)\geq (3/4+\alpha)n$, then $\delta (G_3)\geq (1/4+\alpha )n$.
\end{lemma}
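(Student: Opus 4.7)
I fix a vertex $x\in V$ and introduce, for each $y\in V\setminus\{x\}$,
\[
	f(y) \;=\; \#\bigl\{(a,b,c)\in V^3 \colon \text{both } xabc \text{ and } yabc \text{ are } K_4^{(3)}\text{'s in } H\bigr\}\,,
\]
together with $T_x = \#\{(a,b,c)\in V^3 \colon xabc \text{ is a } K_4^{(3)}\}$. The definition of $G_3$ identifies the neighbourhood $N_{G_3}(x)$ with $\{y\neq x \colon f(y)\geq \beta n^3\}$, and my task is to show this set has at least $(1/4+\alpha)n$ elements. The plan is to double-count the sum $\Sigma=\sum_{y\neq x}f(y)$.

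For the lower bound on $\Sigma$ I swap the order of summation: each ordered triple $(a,b,c)$ counted by $T_x$ contributes exactly the vertices $y$ satisfying $yab,yac,ybc\in E$, i.e.\ the common pair-neighbours of the three pairs $\{a,b\},\{a,c\},\{b,c\}$. By the pair-degree hypothesis and inclusion-exclusion there are at least $3(3/4+\alpha)n-2n=(1/4+3\alpha)n$ such vertices, and removing the four forbidden ones $x,a,b,c$ still leaves at least $(1/4+2\alpha)n$ candidates for large~$n$ (since $\alpha\gg 1/n$). Hence $\Sigma\geq T_x\cdot(1/4+2\alpha)n$.

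For $T_x$ itself I would run the same kind of double count. The identity $2d_H(x)=\sum_{v\neq x}d_H(x,v)\geq(n-1)(3/4+\alpha)n$ produces at least $(3/4+\alpha)(n-1)n$ ordered pairs $(a,b)$ with $xab\in E$, and each such pair admits at least $(1/4+2\alpha)n$ completions to a $K_4^{(3)}$ through $x$ by the same three-codegree inclusion-exclusion. Therefore
\[
	T_x \;\geq\; (3/4+\alpha)(1/4+2\alpha)(n-1)n^2 \;\geq\; \tfrac{3}{16}n^3
\]
for $n$ sufficiently large. Now suppose towards a contradiction that $|N_{G_3}(x)|<(1/4+\alpha)n$. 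Splitting $\Sigma$ according to whether $y\in N_{G_3}(x)$, and bounding $f(y)\leq T_x$ trivially for those $y$ while $f(y)<\beta n^3$ for the rest, I obtain
\[
	\Sigma \;<\; (1/4+\alpha)n\cdot T_x + n\cdot\beta n^3\,.
\]
Combining with the lower bound yields $\alpha n\cdot T_x\leq\beta n^4$, i.e.\ $T_x\leq\beta n^3/\alpha$; under the hierarchy $\alpha\gg\beta$ this is incompatible with $T_x\geq 3n^3/16$, completing the contradiction.

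The only real obstacle is getting the ``$\alpha$-accounting'' right: the $+\alpha$ slack produced by each triple codegree intersection has to beat the $+\alpha$ gap between $(1/4+\alpha)n$ and $(1/4+2\alpha)n$ forced by the contradictory assumption, and it does so precisely because both occurrences refer to the same underlying $\alpha$ and $\beta$ sits far below it in the hierarchy. Apart from this bookkeeping, the proof is essentially just the elementary inequality $f(y)\leq T_x$ together with the observation that the $y$-average of $f$ is of order $\alpha n\cdot T_x/n$, so many $y$ must attain at least a $\beta$-fraction of that average.
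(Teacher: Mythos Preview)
Your proof is correct and follows essentially the same argument as the paper: both define the count $A_x=\sum_{y\neq x}f(y)$ of quadruples $(a,b,c,y)$ with $xabc$ and $yabc$ tetrahedra, bound it below by $T_x\cdot(1/4+2\alpha)n$ via the triple-codegree intersection, bound it above by $|N_{G_3}(x)|\cdot T_x+\beta n^4$ using $f(y)\le T_x$, and reach a contradiction to $\alpha\gg\beta$. The only cosmetic difference is that you obtain $T_x\ge 3n^3/16$ via the identity $2d_H(x)=\sum_v d_H(x,v)$ whereas the paper picks $a$, $b$, $c$ sequentially to get $T_x\ge n^3/8$; the resulting thresholds on $\beta$ differ ($3\alpha/16$ versus $\alpha/8$), but both sit comfortably under the hierarchy.
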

 
\begin{proof}
	Let $x\in V$ and $\beta <\alpha/8$. We count the ordered quadruples $(a,b,c,y)\in V^4$, 
	such that $\{a,b,c,y\}$ and $\{x,a,b,c\}$ induce distinct tetrahedra in $H$. 
	That is, we estimate the size of the set 
		\[
		A_x=\{(a,b,c,y)\in V^4\colon x\neq y \text{ and } xabc ~\text{and}~ abcy 
		~\text{are} ~K_4^{(3)}\}\,.
	\]
		Due to our assumption about $\delta_2(H)$ the number $A$ of triples $(a,b,c)\in V^3$, 
	which form a~$K_4^{(3)}$ with $x$, can be estimated by
		\begin{align}\label{eq:Agro}
		A&=\# \{ (a,b,c)\in V^3\colon abcx ~\text{is a}~ K_4^{(3)} \}\notag \\
	& \geq (n-1)\Big (\dfrac{3n}{4}+\alpha n\Big)\Big( \dfrac{n}{4}+3\alpha n\Big)\notag \\
		&\geq \dfrac{n^3}{8}
	\end{align}
		for $n$ sufficiently large. Using the minimum pair-degree condition again we obtain
		\begin{equation}\label{eq:A11}
		|A_x|\geq A \Big(\dfrac{n}{4}+3\alpha n-1\Big)
		\geq \Big(\dfrac{1}{4}+2\alpha \Big) An\,.
	\end{equation}
		On the other hand, the assumption $d_{G_3}(x)\leq n/4+\alpha n$ would imply that
		\begin{align*}
 		|A_x|=\sum\limits_{y \in V \setminus \{ x\}} 
			\# \{(a,b,c)\in V^3\colon abcy ~\text{and}~ abcx ~\text{are}~ K_4^{(3)}\} 
		\leq n \cdot \beta n^3+(n/4+\alpha n )A \,.
	\end{align*}
		Together with \eqref{eq:A11} this yields that
		\[
		\Big(\dfrac{1}{4}+2\alpha \Big)An\leq \beta n^4+\Big (\dfrac{1}{4}+\alpha \Big )A n\,,
	\]
		i.e., $\beta n^3\geq \alpha A\overset{\eqref{eq:Agro}}{\geq} \alpha n^3/8$. 
	Since $\beta<\alpha/8$ this is a contradiction and shows that the minimum degree
 	of $G_3$ is at least $(1/4+\alpha )n$.
\end{proof}

\begin{lemma}\label{lm:2}
	If $\alpha\gg \beta, \gamma$ and $H$ is a hypergraph on $n$ vertices 
	with minimum pair-degree $\delta_2(H)\geq (3/4+\alpha)n$,
	then for every partition $X \dcup Y=V$ of the vertex set 
	with $|X|,|Y| \geq (1/4+\alpha/2)n$ we have $e_{G_3}(X,Y)\geq \gamma n^2$\,.
\end{lemma}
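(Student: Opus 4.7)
The plan is to double-count the quantity $T_{XY}:=\sum_{x\in X,\,y\in Y}f(x,y)$, where $f(x,y)$ is the number of ordered triples $(a,b,c)\in V^3$ with both $xabc$ and $yabc$ spanning a $K_4^{(3)}$. Since $f(x,y)<\beta n^3$ whenever $xy\notin E(G_3)$ and $f(x,y)\le n^3$ in any case, one has the upper bound
\[
T_{XY}\le e_{G_3}(X,Y)\cdot n^3+\beta n^5,
\]
so it suffices to establish $T_{XY}\gtrsim_{\alpha} n^5$ (recall $\gamma,\beta\ll\alpha$).

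Building on the counting in the proof of Lemma~\ref{lm:1}, for every $x\in V$ we have
\[
|A_x|=\sum_{(a,b,c):\,xabc=K_4^{(3)}}\Bigl(\bigl|N_H(a,b)\cap N_H(a,c)\cap N_H(b,c)\bigr|-1\Bigr)\;\ge\;A(x)\cdot\bigl[(1/4+3\alpha)n-1\bigr],
\]
where the lower bound uses the inclusion--exclusion estimate $|N_H(a,b)\cap N_H(a,c)\cap N_H(b,c)|\ge 3\delta_2(H)-2n\ge(1/4+3\alpha)n$. Splitting $|A_x|=A_x^X+A_x^Y$ according to whether $y\in X$ or $y\in Y$ and applying the trivial bound $A_x^X\le(|X|-1)A(x)$ yields $A_x^Y\ge A(x)\cdot[(1/4+3\alpha)n-|X|]$. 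When $\min\{|X|,|Y|\}\le(1/4+2\alpha)n$, say $|X|\le(1/4+2\alpha)n$, the bracket exceeds $\alpha n$; summing over $x\in X$ and using $A(x)\ge(3/16)n^3$ from the proof of Lemma~\ref{lm:1} produces $T_{XY}\ge(3\alpha/64)n^5$, whence $e_{G_3}(X,Y)\ge\gamma n^2$.

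The remaining case is $|X|,|Y|>(1/4+2\alpha)n$, where the trivial bound on $A_x^X$ becomes too wasteful. Here I would exploit that both $|X|,|Y|\le(3/4-2\alpha)n$, so every pair $\{u,v\}\in V^{(2)}$ satisfies $|N_H(u,v)\cap X|,\,|N_H(u,v)\cap Y|\ge|N_H(u,v)|-(3/4-2\alpha)n\ge 3\alpha n$, and hence
\[
\sum_{\{u,v\}\in V^{(2)}}|N_H(u,v)\cap X|\cdot|N_H(u,v)\cap Y|\;\gtrsim\;\alpha^2 n^4.
\]
Rewriting the left-hand side as $\sum_{x\in X,\,y\in Y}|E(L_x\cap L_y)|$ shows that the typical pair $(x,y)\in X\times Y$ has $\Omega(\alpha^2 n^2)$ edges common to $L_x$ and $L_y$. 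Combined with the universal bound $|E(L_x\cap L_y)|\ge(1/4+\alpha)n^2-O(n)$ (which follows from $|E(L_x)|+|E(L_y)|\ge(3/4+\alpha)n^2-O(n)$ via pigeonhole inside $\binom{n}{2}$), a Bondy-type triangle-counting inequality forces $\Omega(\alpha n^3)$ triangles in every $L_x\cap L_y$; a defect inclusion--exclusion using $\delta_2(H)\ge(3/4+\alpha)n$ then shows that enough of these triangles lie in $E(H)$, yielding $f(x,y)\ge\beta n^3$ for a positive fraction of pairs in $X\times Y$, and hence $e_{G_3}(X,Y)\ge\gamma n^2$.

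The main obstacle is the second case: converting the lower bound on $|E(L_x\cap L_y)|$ into a lower bound on the count of triangles of $L_x\cap L_y$ that are themselves hyperedges of $H$ is delicate, because in the defect inclusion--exclusion the Bondy triangle gain must outweigh the $(1/4-\alpha)n$ pair-degree deficit incurred per edge of $L_x\cap L_y$, a balance that becomes tight as $\alpha\to 0$.
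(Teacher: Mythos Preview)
Your Case~1 argument is correct and clean: when $\min\{|X|,|Y|\}\le(1/4+2\alpha)n$, the pigeonhole on $A_x^X\le(|X|-1)A(x)$ does yield $T_{XY}\gtrsim\alpha n^5$.

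Case~2, however, has a genuine gap, and the obstacle you flag at the end is not merely ``delicate'' but fatal as written. For a fixed pair $(x,y)$ the Kruskal--Katona/Goodman bound applied to $L_x\cap L_y$ with $m:=|E(L_x\cap L_y)|$ edges gives only about $\tfrac{4m}{3n}(m-n^2/4)$ triangles, while the number of those triangles $abc$ with $abc\notin E(H)$ can be as large as $m(1/4-\alpha)n/3$. The difference is nonnegative only when $4m\ge(5/4-\alpha)n^2$, i.e.\ $m\ge(5/16-\alpha/4)n^2$; but your universal bound gives only $m\ge(1/4+\alpha)n^2$, which is strictly smaller for every $\alpha<1/20$. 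So for small $\alpha$ the inequality goes the wrong way, and no per-pair conclusion follows. An averaging over $(x,y)\in X\times Y$ does not rescue this either: the relevant total is $\sum_{abc\in E(H)}|N(a,b,c)\cap X|\cdot|N(a,b,c)\cap Y|$, and in Case~2 one has no pointwise lower bound on $|N(a,b,c)\cap X|$ (it can be zero), so neither summand is controlled.

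The paper avoids this trap by never trying to lower-bound triangles of $L_x\cap L_y$ that happen to lie in $E(H)$. Instead it first counts $K^{(3)}_4$'s in $H$ that straddle the partition (either two vertices in each part, or three in $X$ and one in $Y$), obtaining $\Omega(\alpha n^4)$ of them; crucially, these already contain a hyperedge $abc$ by construction. It then extends each such tetrahedron by a fifth vertex $z$ satisfying carefully chosen link conditions, producing $\Omega(\alpha^2 n^5)$ quintuples in which two overlapping $K^{(3)}_4$'s share a triangle. Reading off the appropriate cross-pair $(x,y)\in X\times Y$ from these quintuples and comparing with the upper bound $e_{G_3}(X,Y)n^3+\beta n^5$ finishes. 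The extra vertex $z$ is exactly what supplies the slack your direct triangle count lacks.
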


\begin{proof}
	W.l.o.g.  we can assume that $|X|\leq |Y|$. Since $|X|\geq (1/4+\alpha /2)n$, 
	we know that $|Y|\leq (3/4-\alpha /2)n$. Counting the ordered triples with two 
	vertices in $X$ and one in $Y$ which induce an edge in $H$, we get
	\begin{align*}
		& \#  \{(x,x',y)\in X^2\times Y\colon xx'y\in E(H)\}\\
		&=  \sum_{(x,y)\in X\times Y} |N(x,y)\cap X|\\
		&\geq |X||Y|\cdot (\delta_2(H)-|Y|)\\
		&\geq \dfrac{3}{16} n^2\cdot \dfrac{3\alpha n}{2} = \dfrac{9 \alpha}{32} n^3 \,.
	\end{align*}
	The number of $K_4^{(3)}$ including such a triple $(x,x',y)$ can thus be estimated by
	\begin{align*}
		\big|  \{(x,x', y,y')&\in X^2\times Y^2\colon xx'yy' \text{ is a } K_4^{(3)}\} \big|
		+\big|\{(x,x', x'',y)\in X^3\times Y\colon xx'x''y \text{ is a } K_4^{(3)}\}\big|\\
		&\geq \dfrac{9 \alpha n^3}{32}\cdot \dfrac{n}{4}=\dfrac{9\alpha}{128} n^4 \,.
	\end{align*}
	Now we will distinguish two cases depending on whether the number of $K_4^{(3)}$ 
	with exactly two or exactly three vertices in $X$ is bigger than $\frac{9\alpha}{256}n^4$.

	\smallskip

	{\it Case 1.} $\# \{(x,x', y,y')\in X^2\times Y^2\colon xx'yy' \text{ is a } K_4^{(3)}\} 
		\geq \dfrac{9\alpha}{256}n^4$

	\smallskip

	Define $A\subseteq X^2\times Y^2\times V$ to be the set of all quintuples 
	$(x,x',y,y',z)$ satisfying
	\begin{enumerate} [label=\rmlabel ,series=A]
		\item \label{it:31} $xx'yy'$ is a $K_4^{(3)}$;
		\item \label{it:32} $zxx',zyy'\in E(H)$;
		\item \label{it:33} and at least three of $zxy,zx'y',zxy',zx'y$ are edges in $H$.
	\end{enumerate}

	We claim that the size of $A$ can be bounded from below by
		\begin{equation} \label{eq:33}
		|A|\geq \dfrac{9\alpha^2}{64}n^5\,.
	\end{equation}
		Since we are in Case 1, it suffices to prove that every tetrahedron 
	$(x,x',y,y')\in X^2\times Y^2$ extends to at least $4\alpha n$ members of $A$.

	Writing 
		\[
		f(z)=| \{xy,xy',x'y,x'y'\}\cap E(L_z)|+2|\{ xx',yy'\} \cap E(L_z)|
	\]
		for every $z\in V$ we get 
		\begin{align*}
		\sum\limits_{z\in V} f(z)&
			=d _H (x,y)+d _H (x,y')+d _H (x',y)+d _H (x',y')+2d _H (x,x')+2d _H (y,y')\\
		&\geq 8 \delta_2(H)  \geq (6+8\alpha )n\,.
	\end{align*} 
		As $f(z)\leq 8$ holds for each $z\in V$ it follows that there are at least 
	$4\alpha n$ vertices with $f(z)\geq 7$. For each of them we have $(x,x',y,y',z)\in A$. 
	Thereby \eqref{eq:33} is proved.

	To derive an upper bound on $|A|$, we break the symmetry in \ref{it:33}. 
	Denoting by $A'$ the set of quintuples $(x,x',y,y',z)\in X^2\times Y^2\times V$ 
	satisfying~\ref{it:31},~\ref{it:32}, and
 	\begin{enumerate}[resume*=A]
		\item $xy'z, x'yz, x'y'z\in E(H)$
	\end{enumerate} 
	we have 
		\begin{equation}\label{eq:34}
		|A|\leq 4|A'|\, .
	\end{equation}
		Moreover
		\begin{align*}
		|A'|&\leq \sum_{(x,y)\in X\times Y} 
			\#\{(x',y',z)\in X\times Y\times V\colon xx'y'z \text{ and } x'yy'z \text{ are } 
				K_4^{(3)}\} \\
		&\leq e_{G_3}(X,Y)\cdot |X||Y||V|+|X||Y|\cdot \beta n^3\\
		&\leq \dfrac{1}{4} e_{G_3}(X,Y) n^3+\dfrac{1}{4}\beta n^5\,.
	\end{align*}
		Therefore with \eqref{eq:33} and \eqref{eq:34} it follows that
		\[
		e_{G_3}(X,Y)\geq \Big(\dfrac{9\alpha^2}{64}-\beta \Big )n^2 \,.
	\]
	
	\smallskip

	{\it Case 2.} $\#\{(x,x', x'',y)\in X^3\times Y\colon xx'x''y \text{ is a } K_4^{(3)}\} 
		\geq \dfrac{9\alpha}{256}n^4$

	\smallskip

	Define $A\subseteq X^3 \times Y \times V$ to be the set of all quintuples 
	$(x,x',x'',y,z)$ satisfying
	\begin{enumerate} [label=\rmlabel ,series=A]
		\item \label{it:311} $xx'x''y$ is a $K_4^{(3)}$;
		\item \label{it:322} if $z\in Y$ at least one of the vertex sets 
			$\{x,x'',y\}, \{x,x',y\}, \{x',x'',y\}$ induces a triangle in $L_z$;
		\item \label{it:333} if $z\in X$ the vertex set $\{x,x',x''\}$ induces a 
			triangle in $L_z$\,.
	\end{enumerate}
 
	We claim that the size of $A$ can be bounded from below by
		\begin{equation}\label{eq:35}
		|A|\geq \dfrac{27\alpha^2}{256}n^5\,.
	\end{equation}
		Since we are in Case 2, it suffices to prove that every tetrahedron 
	$(x,x',x'',y)\in X^3\times Y$ extends to at least $3\alpha n$ members of $A$.

	Writing 
		\[
		f(z)=| \{xy,xx',xx'',x'x'',x'y,x''y\}\cap E(L_z)|
	\]
		for every $z\in V$ we get 
		\begin{align*}
		\sum\limits_{z\in V} f(z)&
			=d _H (x,y)+d _H (x,x')+d _H (x,x'')+d _H (x',x'')+d _H (x',y)+d _H (x'',y)\\
	&\geq 6 \delta_2(H) \geq (9/2+6\alpha )n\,.
	\end{align*}
		If $z\in Y$ is a vertex with $(x,x',x'',y,z)\notin A$ then $f(z)\leq 4$ and if $z\in X$ 
	is a vertex with $(x,x',x'',y,z)\notin A$ then $f(z)\leq 5$. 
	Hence we have 
		\begin{align*}
		&(9/2+6\alpha)n \leq 5|X|+4|Y|+\big| \{z\in X\colon (x,x',x'', y,z)\in A\}\big|
			+2 \big| \{z\in Y\colon (x,x',x'', y,z)\in A\}\big|\,. 
	\end{align*}
		Since $5|X|+4|Y|=4n+|X|\leq 9/2 n$, it follows that 
		\[
		3\alpha n \leq \big| \{z\in X\colon (x,x',x'',y,z)\in A\}\big|
			+\big| \{z\in Y\colon (x,x',x'',y,z)\in A\}\big|\,,
	\]
		as claimed.

	Like before in Case~1 we obtain the upper bound 
		\begin{align*}
		&|A| \leq \beta n^5+e_{G_3}(X,Y)n^3\, .
	\end{align*}
		Therefore with \eqref{eq:35} it follows that
		\[ 
		e_{G_3}(X,Y)\geq \Big(\dfrac{  27\alpha^2}{256}-\beta \Big)n^2\,. \qedhere 
	\]
	\end{proof}

\begin{proof}[Proof of Proposition \ref{prop:033}]
	Because of Lemma~\ref{lm:L16}, Lemma~\ref{lm:1}, and Lemma \ref{lm:2}
	it remains to check that for every partition 
	$V=X\dcup Y$ with ${\sqrt{\gamma}n\le |X|\le (1/4+\alpha/2)n}$ 
	we have $e_{G_3}(X,Y)\ge \gamma n^2$.
	This follows easily from 
		\[
		e_{G_3}(X,Y)= \sum\limits_{x\in X} d^{G_3}_Y(x) \geq \delta (G_3) \cdot |X|- |X|^2
	\]
		and Lemma~\ref{lm:1}.  
\end{proof}

\subsection{The auxiliary graphs \texorpdfstring{$G_v$}{}} 
The second kind of auxiliary graphs we will study is the following.

\begin{dfn}\label{dfn:Gv}
	For a 3-uniform hypergraph $H=(V,E)$ and a vertex $v\in V$ we define the auxiliary 
	graph $G_v$ as the graph with vertex set $V(G_v)=V\setminus \{v\}$ and 
		\[
		xy\in E(G_v) \Longleftrightarrow x\neq y \text{ and } 
		\# \{(a,b)\in V^2\colon xabv \text{ and } yabv \text{ are } K_4^{(3)}  \}
		\geq \beta n^2 \,.
	\]
		\end{dfn}
	
\begin{figure}[ht]
\begin{tikzpicture}[scale=1.2]
	
	\coordinate (a) at (2.5,0.8);
	\coordinate (x) at (1.4,3.4);
	\coordinate (z) at (4,5);
	\coordinate (b) at (5.5,0.8);
	\coordinate (y) at (6.6,3.4);
	\coordinate (g) at (3.5,0.3);
   
	\begin{pgfonlayer}{front}
		\draw[green, line width=2pt] (a) -- (x);
		\draw[green, line width=2pt] (a) -- (b);
		\draw[green, line width=2pt] (b) -- (x);
		\draw[green, line width=2pt] (b) -- (y);
		\draw[green, line width=2pt] (a) -- (y);	
	\draw[orange, line width=2pt, dotted] (x) -- (y);
	
		\fill (z) circle (2pt);
		\fill (a) circle (2pt);
		\fill (x) circle (2pt);
		\fill (y) circle (2pt);
		\fill (b) circle (2pt);

		\node at ($(a)+(180:0.34)$) {\footnotesize $a$};
		\node at ($(x)+(180:0.34)$) {\footnotesize $x$};
		\node at ($(z)+(90:0.34)$) {\footnotesize $v$};
		\node at ($(b)+(0:0.34)$) {\footnotesize $b$};
		\node at ($(y)+(0:0.34)$) {\footnotesize $y$};
		\node at ($(g)+(0:0.34)$) {$V\setminus\{v\}$};
	\end{pgfonlayer}
	
	\begin{pgfonlayer}{background}

		\draw[cyan, line width=2pt] (4,2.1) ellipse (128pt and 60pt);
		\fill[blue, opacity=0.05] (4,2.1) ellipse (128pt and 60pt);
	\end{pgfonlayer} 

\qedge{(b)}{(a)}{(x)}{4.5pt}{1.5pt}{red!70!white}{red!70!white,opacity=0.2};
\qedge{(b)}{(a)}{(y)}{4.5pt}{1.5pt}{red!70!white}{red!70!white,opacity=0.2};
	\end{tikzpicture}
	\caption {We have {\textcolor{orange}{$xy\in E(G_v)$} iff there are ``many'' pairs 
	$(a,b)\in V^2$ for which \textcolor{red}{ $abx,aby \in E(H)$} and 
	\textcolor{green}{$abx, aby$} span triangles in \textcolor{green}{$L_v$}.}}
	\label{fig:Gv}
	\end{figure}
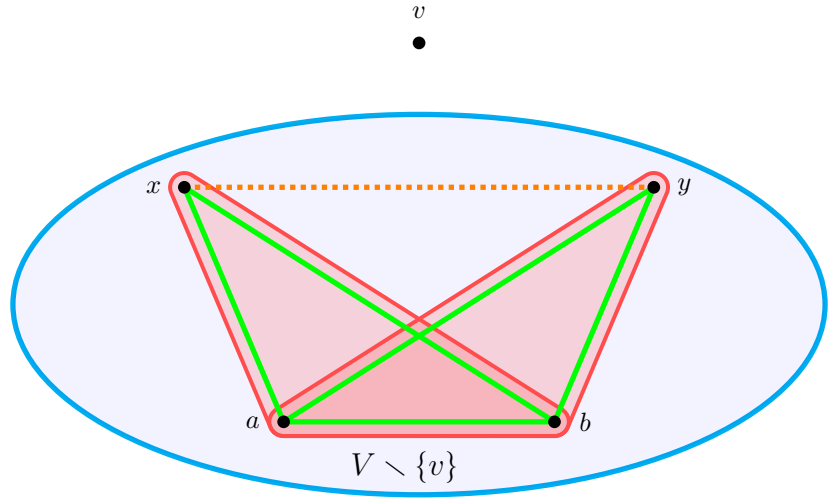	

The main result of this subsection is the following proposition.

\begin{prop}\label{prop:zwei}
	Given $\alpha>0$ there exist $n_0, \ell \in \NN$ such that in every 
	hypergraph $H$ with $v(H)=n\geq n_0$ and $\delta_2(H)\geq (3/4+\alpha )n$ for 
	every $v\in V(H)$ and for every pair of distinct vertices $x,y\in V(G_v)$ 
	there exists some $t=t(x,y)\leq \ell$ 
	for which there are at least $\Omega(n^{t-1})$  $x$-$y$-walks of length $t$ in $G_v$.
\end{prop}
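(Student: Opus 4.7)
The plan is to mirror the proof of Proposition~\ref{prop:033}: I would prove analogues of Lemma~\ref{lm:1} (minimum degree) and Lemma~\ref{lm:2} (bipartite density) for $G_v$, and then apply Lemma~\ref{lm:L16} verbatim.

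For the minimum-degree lemma, fix $v\in V$ and $x\in V\sm\{v\}$ and set
\[
	A_x=\#\{(a,b)\in V^2\colon xabv\text{ is a }K_4^{(3)}\} \qand B_x=\#\{(a,b,y)\in V^3\colon y\neq x,\ xabv,\,yabv\in K_4^{(3)}\}\,.
\]
Iterating the pair-degree condition as in~\eqref{eq:Agro} gives $A_x\ge n^2/8$, and for each admissible $(a,b)$ the set of valid $y$ equals $N_H(v,a)\cap N_H(v,b)\cap N_H(a,b)\sm\{v,x\}$, of size at least $(1/4+2\alpha)n$, so $B_x\ge A_x(1/4+2\alpha)n$. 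On the other hand, splitting the defining sum of $B_x$ according to whether $xy\in E(G_v)$ yields $B_x\le d_{G_v}(x)A_x+\beta n^3$, and combining with $\beta\ll\alpha$ delivers $\delta(G_v)\ge(1/4+\alpha)n$, exactly as in Lemma~\ref{lm:1}.

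The bipartite-density lemma, stating that $e_{G_v}(X,Y)\ge\gamma n^2$ for every partition $V\sm\{v\}=X\dcup Y$ with $|X|,|Y|\ge(1/4+\alpha/2)n$, is the substantive step and will be the main obstacle. I would mimic the Case~1 / Case~2 scheme of Lemma~\ref{lm:2}, counting tetrahedra in $H$ of the form $vxx'y$ with $x,x'\in X,\,y\in Y$ (and the symmetric $vxyy'$), respectively $vxx'x''$-type tetrahedra, which by the $(3/4+\alpha)$ pair-degree condition number $\Omega(\alpha n^4)$. For each such starting tetrahedron I would introduce an auxiliary vertex $z$ together with a weighted indicator $f(z)$ built as in Lemma~\ref{lm:2}, whose sum over $z$ is close to its pointwise maximum times $n$; a pigeonhole then yields $\Omega(\alpha n)$ ``near-optimal'' choices of $z$, each producing a pair $(a,b)$ that witnesses an edge between $X$ and $Y$ in $G_v$. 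A standard double count then converts $\Omega(\alpha^c n^5)$ configurations into at least $\gamma n^2$ crossing edges of $G_v$, provided $\beta$ and $\gamma$ are chosen sufficiently small in terms of $\alpha$.

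The reason the bipartite-density step is delicate is that the witness for a $G_v$-edge is a \emph{pair} $(a,b)$ rather than a single vertex, so the defining incidence structure comprises seven triples in $E(H)$---too many for a naive inclusion--exclusion on seven pair degrees under $\delta_2\ge(3/4+\alpha)n$, which returns a negative lower bound. The remedy, as in Lemma~\ref{lm:2}, is to arrange the counting so that most of these incidences are supplied for free by the starting tetrahedron and by the weighting of $f$, with only a controlled number of constraints remaining to be certified. With both auxiliary lemmas in hand, partitions with $|X|<(1/4+\alpha/2)n$ are handled by the minimum-degree estimate exactly as at the end of the proof of Proposition~\ref{prop:033}, and Lemma~\ref{lm:L16} then supplies the desired $x$--$y$-walk count in $G_v$.
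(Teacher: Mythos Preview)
Your overall plan---prove a minimum-degree lemma and a bipartite-density lemma for $G_v$, then invoke Lemma~\ref{lm:L16}---is exactly what the paper does, and your sketch of the minimum-degree lemma coincides with the paper's Lemma~\ref{lm:11}. The gap is in the bipartite-density step. You propose to start from tetrahedra of the form $vxx'y$ (three free vertices, so $\Omega(n^3)$ of them, not $\Omega(\alpha n^4)$ as you write) and run a single $f(z)$ argument as in Lemma~\ref{lm:2}. This does not work: to certify a $G_v$-edge between $X$ and $Y$ from $\{x,x',y,z\}$ you need, say, both $vxx'z$ and $vx'yz$ to be tetrahedra, which imposes five of the six pair-conditions $\{vx,vx',vy,xx',xy,x'y\}\subseteq E(L_z)$; but with $\sum_z f(z)\ge (9/2+6\alpha)n$ and $f\le 6$ you only force $f(z)\ge 5$ for about $n/4$ vertices, and when the missing pair is $vx$ (or $vy$) and $z$ lies on the wrong side of the partition no option survives. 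Weightings that depend on which side $z$ lies in do not rescue this, for the same reason your final paragraph anticipates: one extension vertex is not enough to pay for all the incidences.

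The paper's Lemma~\ref{lm:21} resolves this by a \emph{two-stage} extension before the $f(z)$ step. First one counts $(x,y,z)\in X\times Y\times V$ with $vxyz$ a tetrahedron (the set $A_\star$, of size $\ge n^3/32$), then extends to $(x,x',y,y')\in X^2\times Y^2$ with $vxx'y$ and $vxx'y'$ both tetrahedra \emph{and} $yy'\in E(L_v)$ (the set $A_{\star\star}$, of size $\ge \alpha n^4/16$). If many of these land with the extra vertex in $X$ one finishes directly; otherwise one has $\Omega(\alpha n^4)$ quadruples with $xx'yy'$ spanning a $K_4$ in $L_v$, and only then does a weighted $f(z)$ argument (with maximum $16$ and threshold $12$) supply $\Omega(\alpha n)$ good $z$, yielding the $\Omega(\alpha^2 n^5)$ configurations you were aiming for. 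In short, the missing idea in your sketch is precisely this intermediate $A_{\star\star}$-step that upgrades a single tetrahedron through $v$ to a $K_4$ in the link of $v$ before searching for the auxiliary vertex.
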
	
	
The next lemma gives us a lower bound on the minimum degree of $G_v$.

\begin{lemma}\label{lm:11}	
	If $\alpha\gg n^{-1}$ and $H$ is a hypergraph on $n$ vertices with 
	$\delta_2(H)\geq (3/4+\alpha)n$, then $\delta (G_v)\geq (1/4+\alpha )n$\,.
\end{lemma}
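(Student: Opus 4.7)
The plan is to adapt the double-counting argument from the proof of Lemma~\ref{lm:1} almost verbatim, with $v$ playing a fixed spectator role throughout. Fix $v \in V$ and $x \in V \setminus \{v\}$, and recall $\beta < \alpha/8$. Following the earlier pattern, I would double count the set
\[
	A \;=\; \bigl\{(a,b,y)\in V^3 \colon y\neq x \tand xabv,\, yabv \text{ are both } K_4^{(3)}\text{'s in } H\bigr\}\,.
\]

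For the lower bound, first estimate $B = \#\{(a,b)\in V^2 \colon xabv \text{ is a } K_4^{(3)}\}$. Picking $a \in N_H(v,x) \setminus \{v,x\}$ yields at least $(3/4+\alpha)n - 2$ options, and given $a$ the second coordinate $b$ must lie in $N_H(v,x) \cap N_H(v,a) \cap N_H(x,a) \setminus \{v,x,a\}$, a set of size at least $3(3/4+\alpha)n - 2n - 3 = (1/4+3\alpha)n - 3$ by inclusion--exclusion and the pair-degree assumption. Hence for $n$ large
\[
	B \;\geq\; \bigl(3/4+\alpha\bigr)\bigl(1/4+3\alpha\bigr) n^2 - O(n) \;\geq\; n^2/8\,.
\]
For each such $(a,b)$ the number of completions $y \in V \setminus \{x\}$ turning $yabv$ into a $K_4^{(3)}$ is, by the same reasoning, at least $|N_H(v,a) \cap N_H(v,b) \cap N_H(a,b)| - O(1) \geq (1/4+2\alpha)n$. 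Therefore $|A| \geq (1/4+2\alpha) B n$.

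For the upper bound I would partition $A$ according to the value of $y$. Vertices $y \in N_{G_v}(x)$ each contribute at most $B$ pairs $(a,b)$ (since the pair must in particular make $xabv$ a $K_4^{(3)}$), while vertices $y \notin N_{G_v}(x) \cup \{v,x\}$ each contribute fewer than $\beta n^2$ pairs by Definition~\ref{dfn:Gv}. This gives
\[
	|A| \;\leq\; d_{G_v}(x) \cdot B + n \cdot \beta n^2 \;=\; d_{G_v}(x) \cdot B + \beta n^3\,.
\]
Now suppose for contradiction that $d_{G_v}(x) \leq (1/4+\alpha)n$. Combining the two bounds yields $\alpha B n \leq \beta n^3$, i.e.\ $B \leq (\beta/\alpha)\,n^2$, contradicting $B \geq n^2/8$ since $\beta < \alpha/8$.

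There is essentially no obstacle here: the only delicate point is that the $O(1)$ losses incurred from forcing $v,x,a,b,y$ to be distinct are comfortably absorbed into the $\alpha n$ slack of the pair-degree hypothesis, which is immediate once $n$ is large. Structurally the argument is identical to the proof of Lemma~\ref{lm:1}, with one of the three free vertices there replaced throughout by the fixed vertex $v$; accordingly the codimension of the configuration we count drops by one and the final counting inequality has $n^3$ where Lemma~\ref{lm:1} had $n^4$.
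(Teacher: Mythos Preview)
Your argument is correct and essentially identical to the paper's own proof: the paper also fixes $x$, lower-bounds the number $A$ of pairs $(a,b)$ with $xabv$ a tetrahedron by $n^2/8$, lower-bounds the triple count $|A_x|$ by $(1/4+2\alpha)An$, and then derives the same contradiction $\alpha A \le \beta n^2$ from the assumption $d_{G_v}(x)\le (1/4+\alpha)n$. The only cosmetic difference is notation (your $B$ is the paper's $A$, your $A$ is the paper's $A_x$) and that you track the $O(1)$ distinctness corrections slightly more explicitly.
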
 

\begin{proof}
	Let $x\in V\setminus \{v\}$. We count the triples $(a,b,y)\in V^3$, such that 
	$\{y,a,b,v\}$ and $\{x,a,b,v\}$ induce distinct tetrahedra in $H$. That is, we estimate 
	the size of the set 
		\[
		A_x=\{(a,b,y)\in V^3\colon x\neq y \neq v\text{ and } xabv ~\text{and}~ yabv 
		~\text{are} ~K_4^{(3)}\} \,.
	\]
		Due to our assumption about $\delta_2(H)$ the number $A$ of pairs $(a,b)\in V^2$, 
	which form a $K_4^{(3)}$ with $x$ and $v$, can be estimated by
		\begin{align}\label{eq:Agro2}
		A&=\# \{ (a,b)\in V^2\colon abxv ~\text{is a}~ K_4^{(3)} \}\notag \\
		& \geq \Big (\dfrac{3n}{4}+\alpha n\Big)\Big( \dfrac{n}{4}+3\alpha n\Big)\notag \\
				&\geq \dfrac{n^2}{8}\,.
	\end{align}
	Moreover we have
		\begin{equation}\label{eq:36}
		|A_x|\geq A \Big(\dfrac{n}{4}+3\alpha n-1\Big)
		\geq \Big (\frac{1}{4}+2\alpha \Big )An\,.
	\end{equation} 
		On the other hand, the assumption $d_{G_v}(x)\leq n/4+\alpha n$ would imply that
		\[
 		|A_x|=\sum\limits_{y\in V\setminus \{v,x\}} 
		\# \{(a,b)\in V^2\colon abvy ~\text{and}~ abvx ~\text{are}~ K_4^{(3)}\}
 	\leq n\cdot \beta n^2+(n/4+\alpha n)A\,.
	\]
		Together with \eqref{eq:36} this yields that
		\[
		\Big (\frac{1}{4}+2\alpha \Big )An\leq \beta n^3+\Big(\frac{1}{4}+\alpha  \Big )An\,,
	\]
		i.e., $\beta n^2\geq \alpha A \overset{\eqref{eq:Agro2}}{\geq} \alpha n^2/8$. 
	Since $\beta< \alpha/8$ this is a contradiction and shows that the minimum degree 
	of $G_v$ is at least $(1/4+\alpha )n$.
\end{proof}

\begin{lemma}\label{lm:21}
	If $\alpha\gg \beta, \gamma\gg  n^{-1}$ and $H$ is a hypergraph on $n$ vertices 
	with minimum pair-degree $\delta_2(H)\geq (3/4+\alpha)n$,
	then for every partition $X \dcup Y=V\setminus\{v\}$ of the vertex set 
	with $|X|,|Y| \geq (1/4+\alpha/2)n$ we have $e_{G_v}(X,Y)\geq \gamma n^2$\,.
\end{lemma}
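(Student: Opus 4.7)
The plan is to mimic the proof of Lemma~\ref{lm:2}, with~$v$ playing the role of a fixed fourth vertex of every $K_4^{(3)}$. The hypothesis forces $|X|,|Y|\in[(1/4+\alpha/2)n,\,(3/4-\alpha/2)n]$.

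First I would produce many ``mixed'' $K_4^{(3)}$'s through~$v$. For each $x\in X$ the link graph $L_v$ has at least $(3/4+\alpha)n-|X|\ge(3\alpha/2)n$ neighbours in~$Y$, so $e_{L_v}(X,Y)\ge(3\alpha/8)n^2$; and every $L_v$-edge $xy\in X\times Y$ extends to a $K_4^{(3)}$ $\{v,x,y,w\}$ in at least $|N_H(v,x)\cap N_H(v,y)\cap N_H(x,y)|\ge(1/4+3\alpha)n$ ways. Altogether this yields $\Omega(\alpha)n^3$ ordered triples $(x,y,w)\in X\times Y\times V$ with $\{v,x,y,w\}$ a $K_4^{(3)}$; splitting by whether $w\in X$ or $w\in Y$ and using the symmetry $X\leftrightarrow Y$ of the conclusion, I may assume at least $c\alpha\,n^3$ unordered tetrahedra of type $(2X,1Y)$, i.e.\ $\{v,x_1,x_2,y\}$ with $x_1,x_2\in X$ and $y\in Y$.

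For each such tetrahedron I would apply the link-sum trick from Lemma~\ref{lm:2}. Setting
\[
f(w)=\bigl|\{x_1v,\,x_2v,\,yv,\,x_1x_2,\,x_1y,\,x_2y\}\cap E(L_w)\bigr|,
\]
the minimum pair-degree condition gives $\sum_{w\in V}f(w)\ge6\delta_2(H)\ge(9/2+6\alpha)n$, and hence at least $(1/4+3\alpha)n$ vertices~$w$ satisfy $f(w)\ge5$. For any such~$w$ the five-vertex configuration $\{v,x_1,x_2,y,w\}$ contains, besides the original tetrahedron, at least one further $K_4^{(3)}$ through~$v$ among $\{v,x_1,x_2,w\}$, $\{v,x_1,y,w\}$ and $\{v,x_2,y,w\}$. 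Each such pair of $K_4^{(3)}$'s on~$v$ shares a triangle $T\ni v$, and the two vertices outside~$T$ form a pair whose $G_v$-adjacency is witnessed by the pair $(a,b)=T\setminus\{v\}$. A brief case analysis on which of the six $w$-edges (if any) is missing shows that this pair lies in $X\times Y$ except in a short list of ``bad'' configurations; after using the $x_1\leftrightarrow x_2$-symmetry and splitting by $w\in X$ versus $w\in Y$, one obtains (for one of the resulting symmetry classes) a set $\mathcal{A}'\subseteq X^2\times Y\times Y$ of quadruples $(x_1,x_2,y,w)$ such that both $\{v,x_1,x_2,y\}$ and $\{v,x_1,y,w\}$ are $K_4^{(3)}$, with $|\mathcal{A}'|\ge c'\alpha\,n^4$.

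The final step is a double count of~$\mathcal{A}'$ via the coordinates $(x_2,w)\in X\times Y$. Each quadruple $(x_1,x_2,y,w)\in\mathcal{A}'$ exhibits $(x_1,y)$ as a witness for $x_2w\in E(G_v)$ in the sense of Definition~\ref{dfn:Gv}, so for fixed $(x_2,w)$ the inner count of $(x_1,y)$ is at most $n^2$ when $x_2w\in E(G_v)$ and less than $\beta n^2$ otherwise. Thus $|\mathcal{A}'|\le e_{G_v}(X,Y)\cdot n^2+\beta n^4$, and combining with the previous lower bound and the hierarchy $\alpha\gg\beta,\gamma$ gives $e_{G_v}(X,Y)\ge(c'\alpha-\beta)n^2\ge\gamma n^2$. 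The main obstacle is the intermediate bookkeeping: one must verify carefully that after classifying by which (if any) $w$-edge is missing, and after absorbing the $x_1\leftrightarrow x_2$-symmetry, the chosen class $\mathcal{A}'$ still contains $\Omega(\alpha)n^4$ quadruples. This is the direct analogue of the $|A|\le 4|A'|$ reduction in Lemma~\ref{lm:2}.
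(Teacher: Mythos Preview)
Your overall plan---count $K_4^{(3)}$'s through $v$ with mixed $X$/$Y$ structure, extend by a fifth vertex via a link-sum, then double-count---is the same as the paper's. But the step you flag as ``intermediate bookkeeping'' is a genuine gap, and the analogy with the $|A|\le 4|A'|$ reduction in Lemma~\ref{lm:2} is misleading.

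In Lemma~\ref{lm:2}, Case~1, the base tetrahedra lie in $X^2\times Y^2$, and the symmetry group $(x\leftrightarrow x')\times(y\leftrightarrow y')$ permutes the four possible ``secondary'' configurations while \emph{every} class still witnesses a pair in $X\times Y$. That is why collapsing to $A'$ costs only a factor~$4$. In your setup the base tetrahedra are of type $(2X,1Y)$, and after splitting by which secondary tetrahedron exists and by $w\in X$ versus $w\in Y$, some classes witness pairs in $X^2$ or $Y^2$. Concretely: if $w\in Y$ and the missing $L_w$-edge is $yv$, then only $\{v,x_1,x_2,w\}$ is a $K_4^{(3)}$ and it witnesses $(y,w)\in Y^2$; if $w\in X$ and the missing edge is $x_1x_2$ (or $x_iv$), the available secondary tetrahedra witness pairs in $X^2$. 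Nothing in the inequality $f(w)\ge 5$ prevents these bad classes from absorbing essentially all of your $\Omega(\alpha)n^4$ quadruples, so the existence of a good class $\mathcal{A}'$ of size $\ge c'\alpha n^4$ does not follow.

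The paper confronts exactly this obstacle. It first obtains $\Omega(n^3)$ (not merely $\Omega(\alpha n^3)$) tetrahedra $vaby$ with $a,b\in X$, $y\in Y$, and then extends to a fourth vertex~$z$ in a targeted way: $vabz$ must be a tetrahedron \emph{and} $yz\in E(L_v)$. This gives $\Omega(\alpha n^4)$ quadruples $(a,b,y,z)$ in which the pair $(a,b)$ always witnesses $yz$ for $G_v$. If $z\in X$ the pair $(y,z)$ is cross and the double count finishes at once (Case~1). If $z\in Y$, the paper runs a second extension by a fifth vertex using a carefully \emph{weighted} function $f$ of maximum~$16$ (pairs through $v$ and the pairs $xx',yy'$ receive weight~$2$); the weighting is arranged so that $f(z)>12$ forces simultaneously a ``same-side'' and a ``cross'' secondary tetrahedron through~$v$, and only then does a symmetry reduction of the $|A|\le 4|A'|$ type go through. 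Your unweighted $f$ with threshold~$5$ lacks this feature, and supplying it is the missing idea.
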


\begin{proof}
	We begin by showing that the set 
		\[ 
		A_\star 
		=
		\{ (x,y,z)\in X\times Y\times (V\setminus\{v\})\colon vxyz \text{ is a } K_4^{(3)} 
			\text{ in } H\} \,,
	\]
		satisfies 
		\begin{equation}\label{eq:Astern}
		|A_\star |\geq \frac{n^3}{32}\, .
	\end{equation}
		For the proof of this fact we may assume that $|X|\leq |Y|$. 
	Thus $|X|\in [\frac{n}{4},\frac{n}{2}]$ and hence 
		\begin{align*}
		|A_\star | &\geq |X| \cdot (\delta_2(H)-|X|) \cdot (3\delta_2(H)-2n)\\
		&\geq |X| \cdot \Big (\dfrac{3}{4}n-|X|\Big )\cdot  \dfrac{n}{4}\\
		&\geq \dfrac{n^2}{8}\cdot \dfrac{n}{4}=\dfrac{n^3}{32}\, ,
	\end{align*}
		as desired.

	It follows that
	\[ 
		|A_\star \cap (X\times Y\times X)|+|A_\star \cap (X\times Y^2)|
		=|A_\star|\geq \dfrac{n^3}{32} 
	\]
	and w.l.o.g. we can assume that $|A_\star \cap (X\times Y\times X)|\geq n^3/64$.
	Now we study the set
	\[
		A_{\star \star}=
		\{(a,b,y,z)\in X^2\times Y\times (V\setminus\{v\})\colon abvy, abvz \text{ are } 
			K^{(3)}_4 
			\text{ and } yz\in E(L_v)\} \,.
	\]
	Given any triple $(a,y,b)\in A_\star\cap (X\times Y\times X)$ the quadruple 
	$abvy$ forms a tetrahedron, there are at least $3\delta_2(H)-2n$ vertices $z$ 
	for which $abvz$ forms a tetrahedron as well, and for at most $n-\delta_2(H)$ of those 
	the condition $yz\in E(L_v)$ fails. 
	Hence 
	\begin{align*}
		|A_{\star \star }| &\geq |A_\star \cap (X\times Y\times X) | 
			\cdot [(3\delta_2(H)-2n)-(n-\delta_2(H))]\\
	&\geq 4\alpha n \cdot |A_\star \cap (X\times Y\times X)|\geq \dfrac{\alpha}{16	}n^4\, .
	\end{align*}
		
	\smallskip
	
	{\it Case 1.} $|A_{\star \star}\cap (X^2\times Y \times X)|\geq \alpha n^4/32$.
	
	\smallskip
	
	Owing to
		\begin{align*}
		\dfrac{\alpha n^4}{32}&\leq |A_{\star \star}\cap (X^2\times Y \times X)|\\
		&\leq \sum \limits_{(z,y)\in X\times Y} \# \{(a,b)\in X^2\colon abzv \text{ and } abvy 
			\text{ are } K_4^{(3)}\}\\
		&\leq \beta n^2 |X||Y|+e_{G_v}(X,Y)\cdot n^2\\
		&\leq \beta n^2 \cdot n^2/4+e_{G_v} (X,Y)\cdot n^2
	\end{align*}
		we have
		\[
		e_{G_v}(X,Y)\geq \Big (\dfrac{\alpha}{32}-\dfrac{\beta }{4}\Big )n^2 \,,
	\]
		as desired.

	\smallskip

	{\it Case 2.} $|A_{\star \star}\cap (X^2\times Y^2)| \geq \alpha n^4/32$

	\smallskip

	Define $A\subseteq X^2\times Y^2\times (V\setminus \{v\})$ to be the set of all 
	quintuples $(x,x',y,y',z)$ satisfying
	\begin{enumerate}[label=\rmlabel]
		\item \label{it:eins} $xx'yy'$ is a $K_4$ in $L_v$
		\item at least one of $xx', yy'$ forms a $K_4^{(3)}$ with $v$ and $z$
		\item at least one of $xy, xy', x'y, x'y'$ forms a $K_4^{(3)}$ with $v$ and $z$.
		\end{enumerate} 

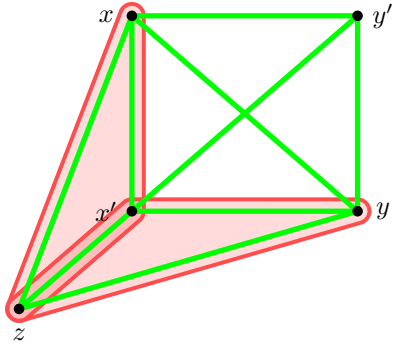
\begin{figure}
\begin{tikzpicture}[scale=1.0]
	
	\coordinate (a) at (2.5,0.8);
	\coordinate (x) at (2.5,3.4);
	
		\coordinate (z) at (1,-0.5);

	\coordinate (b) at (5.5,0.8);
	\coordinate (y) at (5.5,3.4);
	\coordinate (g) at (3.5,-1.5);

	\begin{pgfonlayer}{front}
		\draw[green, line width=2pt] (a) -- (x);
		\draw[green, line width=2pt] (a) -- (b);
		\draw[green, line width=2pt] (b) -- (x);
		\draw[green, line width=2pt] (b) -- (y);
		\draw[green, line width=2pt] (a) -- (y);
		\draw[green, line width=2pt] (a) -- (z);
		\draw[green, line width=2pt] (b) -- (z);
		\draw[green, line width=2pt] (x) -- (z);
		\draw[green, line width=2pt] (x) -- (y);
		\fill (a) circle (2pt);		
		\fill (b) circle (2pt);
		\fill (x) circle (2pt);
		\fill (z) circle (2pt);
		\fill (y) circle (2pt);
		
		\node at ($(a)+(180:0.34)$) {\footnotesize $x'$};
		\node at ($(x)+(180:0.34)$) {\footnotesize $x$};
		\node at ($(b)+(0:0.34)$) {\footnotesize $y$};
		\node at ($(y)+(0:0.34)$) {\footnotesize $y'$};
				\node at ($(z)+(270:0.34)$) {\footnotesize $z$};
						
	\end{pgfonlayer}
	\qedge{(b)}{(z)}{(a)}{4.5pt}{1.5pt}{red!70!white}{red!70!white,opacity=0.2};
\qedge{(x)}{(a)}{(z)}{4.5pt}{1.5pt}{red!70!white}{red!70!white,opacity=0.2};
	\end{tikzpicture}
	\caption {Example of a quintuple in $A$, where the \textcolor{green}{ link graph of $v$} 
	is indicated in green and \textcolor{red}{hyperedges of $H$} in red.}
	\label{fig:33}
	\end{figure}
	
	Notice that condition \ref{it:eins} holds for every 
	$(x,x',y,y')\in A_{\star \star}\cap (X^2\times Y^2)$.
	Let us now fix some such quadruple $(x,x',y,y')$.
 	Due to our assumption about $\delta _2 (H)$ we have 
		\begin{align*}
		d _H (x,y)+d _H (x,y')+d _H (x',y)+d _H (x',y')+2\bigl(d _H (x,x')+d _H (y,y')\bigr)\\
		+2\bigl(d_{L_v}(x)+d_{L_v}(x')+d_{L_v}(y)+d_{L_v}(y')\bigr)
		\geq 16 \delta_2(H)\geq (12+16\alpha)n\,.
	\end{align*}
		So writing 
		\[
		f(z)=| \{xy,xy',x'y,x'y'\}\cap E(L_z)|+2|\{ xx',yy'\} \cap E(L_z)|
			+2|\{vx,vx',vy,vy'\} \cap E(L_z)|
	\]
		for every $z\in V$ we get 
		\[
		 \sum\limits_{z\in V} f(z) \geq (12+16\alpha )n\,.
	\]
		If $z$ is a vertex with $(x,x',y,y',z)\notin A$, then $f(z)\leq 12$, and hence we have 
		\[
		\# \{z\in V\colon (x,x',y,y',z)\in A\}\geq 16\alpha n/4=4\alpha n \,.
	\]
		Applying this argument to every $(x,x',y,y')\in A_{\star \star}\cap (X^2\times Y^2)$      		we obtain, since we are in Case~2, that
	\begin{equation}\label{eq:A}
		|A|\geq \dfrac{\alpha}{32}n^4 \cdot 4\alpha n=\dfrac{\alpha^2}{8} n^5\,.
	\end{equation}
		
	Now let $A_x$ (resp. $A_y$) be the number of quintuples 
	$(x,x',y,y',z)\in X^2\times Y^2 \times (V\setminus \{v\})$ such that
	\begin{itemize}
		\item $xx'vz$ (resp. $yy'vz$) and $x'yvz$ are $K_4^{(3)}$. 
	\end{itemize}
	By symmetry we have 
		\[
		A_x+A_y\geq \dfrac{1}{4}|A|\overset{\eqref{eq:A}}{\geq} \dfrac{\alpha ^2}{32}n^5 \,.
	\]
		Consequently at least one of $A_x, A_y$ is at least $\frac{\alpha^2}{64}n^5$. 
	In either case one can prove that $e_{G_v}(X,Y)\geq \gamma n^2$ and below we display 
	the argument assuming $A_x\geq \frac{\alpha^2}{64}n^5$. In this case
 		\begin{align*}	
		A_x&\leq \sum \limits_{(x,y)\in X\times Y}  \# \{(x',y',z)\in V^3\colon  xx'zv 
		\text{ and } yx'zv \text{ are } K^{(3)}_4\} \\
		&\le n \sum \limits_{(x,y)\in X\times Y}  \# \{(x',z)\in V^2\colon  xx'zv 
		\text{ and } yx'zv \text{ are } K^{(3)}_4\} \\
		&\leq |X||Y|\beta n^3+e_{G_v}(X,Y)n^3
	\end{align*}
		yields
		\[
		e_{G_v}(X,Y)\geq \Big (\dfrac{\alpha^2}{64}-\dfrac{\beta}{4}\Big )n^2 \,, 
	\]
		as desired. The case $A_y\geq \frac{\alpha^2}{64}n^5$ is similar. 
\end{proof}

\begin{proof}[Proof of Proposition \ref{prop:zwei}]
	Because of Lemma \ref{lm:11}  and the fact that 
		\[
		e_{G_v}(X,Y)= \sum\limits_{x\in X} d^{G_v}_Y(x) \geq \delta (G_v) \cdot |X|- |X|^2\, ,
	\]
	 	Lemma~\ref{lm:21} is already true if $|X|,|Y|\geq \sqrt{\gamma} n$. Therefore the 
	assumptions of Lemma \ref{lm:L16} hold for the graph $G_v$, which implies 
	Proposition~\ref{prop:zwei}.  
\end{proof}

\subsection{Connecting Lemma}

For the rest of this section we will use the constant $\frac{4}{5}$, i.e., the minimum 
pair-degree hypothesis $\delta_2(H)\geq (4/5+\alpha )n$.

\begin{dfn}
	For a 3-uniform hypergraph $H=(V,E)$ and vertices $v,r,s\in V$ we 
	write 
		\[
		N_v(r,s)=N(r,s,v) = N(r,v)\cap N(s,v)\cap N(r,s) \,.
	\]
	\end{dfn}

Notice that our minimum pair-degree condition entails
\begin{equation}\label{eq:Nv}
	|N_v(r,s)|\ge 2n/5\ge n/4
\end{equation}
for all $v,r,s\in V$.

\begin{dfn}
	Given a hypergraph $H$ on $n$ vertices with minimum pair-degree 
	$\delta_2(H)\geq (4/5+\alpha)n$ and two distinct vertices $v, w\in V(H)$ we define 
	the auxiliary graph~$G_{vw}$ by $V(G_{vw})=N(v,w)$ and 
	\[
		uu'\in E(G_{vw}) \Longleftrightarrow uu'vw \text{ is a } K_4^{(3)}\,.
	\]
	\end{dfn}

Due to our assumption about the minimum pair-degree we know that the size $n'$
of the vertex set satisfies $n'=|V(G_{vw})|\geq (4/5+\alpha )n$.

\begin{lemma}\label{lm:B2}
	Let $v,w\in V$ and $b,x\in V(G_{vw})$. There are at least $\alpha n^2/2$ walks of 
	length $3$ from $b$ to $x$ in $G_{vw}$. 
\end{lemma}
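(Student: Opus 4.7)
The plan is to prove Lemma~\ref{lm:B2} by a direct counting argument that exploits the minimum pair-degree hypothesis via inclusion–exclusion on pair-neighbourhoods in $H$. A walk of length $3$ from $b$ to $x$ in $G_{vw}$ is an ordered sequence $(b, u_1, u_2, x)$ with $bu_1, u_1u_2, u_2x \in E(G_{vw})$, so it suffices to lower-bound the number of ordered pairs $(u_1, u_2)$ realising these three adjacencies.

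The first step is to observe that every neighbourhood and every common neighbourhood of two vertices in $G_{vw}$ is just an intersection of pair-neighbourhoods in $H$. Indeed, since $V(G_{vw}) = N(v,w)$ and $uu' \in E(G_{vw})$ encodes that all four triples $uvw$, $u'vw$, $uu'v$, $uu'w$ are edges of $H$, one has
\[
N_{G_{vw}}(b) = \bigl(N(v,w) \cap N(b,v) \cap N(b,w)\bigr) \setminus \{b\}
\]
and, for any two vertices $u_1, x \in V(G_{vw})$,
\[
N_{G_{vw}}(u_1) \cap N_{G_{vw}}(x) = \bigl(N(v,w) \cap N(u_1,v) \cap N(u_1,w) \cap N(x,v) \cap N(x,w)\bigr) \setminus \{u_1, x\}.
\]
Using $\delta_2(H) \ge (4/5 + \alpha)n$ and a union bound applied to the complements of the sets appearing in each intersection, the intersection of three such pair-neighbourhoods has size at least $(2/5 + 3\alpha)n$, and the intersection of five has size at least $5\alpha n$. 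This yields $|N_{G_{vw}}(b)| \ge (2/5 + 3\alpha)n - 1$ and the uniform codegree bound $|N_{G_{vw}}(u_1) \cap N_{G_{vw}}(x)| \ge 5\alpha n - 2$ for every choice of $u_1, x \in V(G_{vw})$.

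Combining these two bounds,
\[
\#\{\text{walks of length } 3 \text{ from } b \text{ to } x\}
= \sum_{u_1 \in N_{G_{vw}}(b)} \bigl|N_{G_{vw}}(u_1) \cap N_{G_{vw}}(x)\bigr|
\ge \bigl((2/5 + 3\alpha)n - 1\bigr)(5\alpha n - 2),
\]
which is roughly $2\alpha n^2$ and in particular exceeds $\alpha n^2/2$ once $n$ is sufficiently large. Since the entire argument is a one-shot inclusion–exclusion calculation powered by the pair-degree hypothesis, I do not anticipate any real obstacle; the only minor care needed is to absorb the $-1$ and $-2$ error terms (coming from excluding $b$, $u_1$, $x$) into the slack between $2\alpha n^2$ and $\alpha n^2/2$.
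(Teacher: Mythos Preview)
Your proof is correct and follows essentially the same route as the paper: both arguments count walks $bu_1u_2x$ by lower-bounding $|N_{G_{vw}}(b)|$ and the common neighbourhood $|N_{G_{vw}}(u_1)\cap N_{G_{vw}}(x)|$ via inclusion--exclusion on pair-neighbourhoods in $H$. The only cosmetic difference is that the paper first packages the degree estimate as $\delta(G_{vw})\ge(1/2+\alpha)n'$ and then deduces a common-neighbourhood bound of $2\alpha n'$ inside $G_{vw}$, whereas you compute both bounds directly in $H$ (yielding the slightly sharper $5\alpha n$ for the common neighbourhood).
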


\begin{proof}
	For a vertex $r\in V(G_{vw})$ we have
		\begin{align*}
		d_{G_{vw}}(r)&\geq  |V(G_{vw})|- 2(n-\delta_2(H))\\
		&\geq \dfrac{|V(G_{vw})|}{2}+\dfrac{\delta_2(H)}{2}- 2(n-\delta_2(H))\\
		&= \dfrac{|V(G_{vw})|}{2}+\dfrac{5\delta_2(H)}{2}- 2n
			\geq \dfrac{n'}{2}+\dfrac{5\alpha n}{2}\geq \Big(\dfrac{1}{2}+\alpha \Big ) n'\, . 
	\end{align*}
		
	Thus the minimum degree of $G_{vw}$ can be bounded from below by 
	$\delta(G_{vw})\geq (1/2+\alpha )n'$ and any two vertices of $G_{vw}$ have at least 
	$2\alpha n'$ common neighbours in $G_{vw}$. Due to this and the minimum vertex degree
	condition in $G_{vw}$ we can therefore find at least 
		\[
		\dfrac{n'}{2}\cdot 2\alpha n'=\alpha (n')^2\geq \dfrac{\alpha }{2}n^{2}
	\]
		walks of length $3$ from $b$ to $x$ in $G_{vw}$. 
 	This shows Lemma \ref{lm:B2}.
\end{proof}

\begin{lemma}\label{cl:1}
	If $vbc, vxy\in E$ and $|N_v(b,c)\cap N_v(x,y)|=m$, then there are at 
	least $\alpha^2 m^{2}n^{2}/4$ quadruples 
	$(w_0,b_1,c_1,w_1)$ such that $bcw_0b_1c_1w_1xy$ is
	\begin{itemize} 
	\item  a walk in $H$ and
	\item  a squared walk in $L_v$\,.
	\end{itemize} 
\end{lemma}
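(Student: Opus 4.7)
A quadruple $(w_0,b_1,c_1,w_1)$ makes $bcw_0b_1c_1w_1xy$ into both a walk in $H$ and a squared walk in $L_v$ precisely when each of the six overlapping tetrahedra
\[
	vbcw_0,\ vcw_0b_1,\ vw_0b_1c_1,\ vb_1c_1w_1,\ vc_1w_1x,\ vw_1xy
\]
spans a copy of $K_4^{(3)}$ in $H$: the three link-graph edges and the single $H$-edge of each such tetrahedron exactly account for the six consecutive $H$-triples and all $L_v$-pairs at distance at most two in the walk.

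My plan is to parameterise by $S := N_v(b,c)\cap N_v(x,y)$, with $|S|=m$, and to boost the single factor of $m$ up to $m^2$ via Cauchy--Schwarz. For every $w\in S$ one has $c,x \in V(G_{vw})=N(v,w)$, so Lemma~\ref{lm:B2} produces at least $\alpha n^2/2$ length-three walks from $c$ to $x$ in $G_{vw}$. Setting
\[
	f(b_1,c_1) = \bigl|\{w\in S : cb_1,\,b_1c_1,\,c_1x \in E(G_{vw})\}\bigr|,
\]
swapping orders of summation gives $\sum_{b_1,c_1\in V} f(b_1,c_1) \ge m\cdot\alpha n^2/2$, and Cauchy--Schwarz together with the trivial bound $n^2$ on the number of pairs $(b_1,c_1)\in V^2$ yields
\[
	\sum_{b_1,c_1 \in V} f(b_1,c_1)^2 \ge \frac{1}{n^2}\Bigl(\sum_{b_1,c_1 \in V} f(b_1,c_1)\Bigr)^2 \ge \frac{\alpha^2 m^2 n^2}{4}.
\]

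Finally, each contribution to $\sum f^2$ is an ordered quadruple $(w_0,b_1,c_1,w_1)$ with $w_0,w_1\in S$ and $cb_1,b_1c_1,c_1x \in E(G_{vw_0}) \cap E(G_{vw_1})$; a direct verification confirms that all six tetrahedra above are then copies of $K_4^{(3)}$ ($vbcw_0$ and $vw_1xy$ from $w_0,w_1\in S$; $vcw_0b_1$ and $vc_1w_1x$ from the outer edges in the respective auxiliary graphs; $vw_0b_1c_1$ and $vb_1c_1w_1$ from the shared middle edge $b_1c_1 \in E(G_{vw_0})\cap E(G_{vw_1})$), so the quadruple is valid. The main obstacle is spotting the Cauchy--Schwarz step: a direct application of Lemma~\ref{lm:B2} yields only the $m\alpha n^2/2$ ``diagonal'' quadruples with $w_0=w_1$, and naive inclusion--exclusion on $S$ is overwhelmed by the $(1/5)n$-sized losses coming from the pair-degree threshold $4n/5$; squaring $f$ bypasses both issues because $\sum f^2$ automatically counts the ordered pairs $(w_0,w_1)\in S^2$ whose graphs $G_{vw_0},G_{vw_1}$ simultaneously admit the same length-three walk from $c$ to $x$.
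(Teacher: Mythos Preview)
Your proof is correct and follows essentially the same route as the paper: define the set of witnesses $w\in N_v(b,c)\cap N_v(x,y)$ for each pair $(b_1,c_1)$ (your $f(b_1,c_1)$ is the paper's $|X_{b_1c_1}|$), use Lemma~\ref{lm:B2} and double counting to get $\sum f \ge \alpha m n^2/2$, apply Cauchy--Schwarz to obtain $\sum f^2 \ge \alpha^2 m^2 n^2/4$, and observe that each contribution to $\sum f^2$ corresponds to a valid quadruple. Your explicit verification that the six tetrahedra are all copies of $K_4^{(3)}$ is more detailed than what the paper writes, but the argument is the same.
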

	
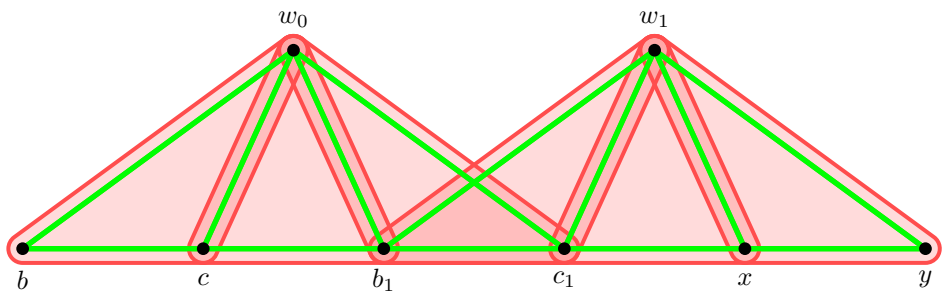
\begin{figure}[ht]
\begin{tikzpicture}[scale=1.2]
	
	\coordinate (b) at (-6,0.8);
	\coordinate (c) at (-4,0.8);
	\coordinate (b1) at (-2,0.8);
	\coordinate (c1) at (0,0.8);
	\coordinate (x) at (2,0.8);
	\coordinate (y) at (4,0.8);
	
	\coordinate (w0) at (-3,3);
	\coordinate (w1) at (1,3);

	\begin{pgfonlayer}{front}
	
\draw[green, line width=2pt] (b) -- (c);
		\draw[green, line width=2pt] (c) -- (b1);
		\draw[green, line width=2pt] (b1) -- (c1);
		\draw[green, line width=2pt] (c1) -- (x);
		\draw[green, line width=2pt] (b) -- (w0);
		\draw[green, line width=2pt] (c) -- (w0);
		\draw[green, line width=2pt] (b1) -- (w0);
		\draw[green, line width=2pt] (c1) -- (w0);
		\draw[green, line width=2pt] (b1) -- (w1);
		\draw[green, line width=2pt] (c1) -- (w1);
		\draw[green, line width=2pt] (x) -- (w1);
		\draw[green, line width=2pt] (y) -- (w1);
				\draw[green, line width=2pt] (x) -- (y);		
	
		\fill (b) circle (2pt);
		
		\fill (c) circle (2pt);
		
		\fill (b1) circle (2pt);
		
		\fill (c1) circle (2pt);
		
		\fill (x) circle (2pt);
		\fill (y) circle (2pt);
		\fill (w0) circle (2pt);
		\fill (w1) circle (2pt);

	\node at ($(w1)+(90:0.34)$) {\footnotesize $w_1$};
	\node at ($(w0)+(90:0.34)$) {\footnotesize $w_0$};
		\node at ($(b)+(270:0.34)$) {\footnotesize $b$};
		\node at ($(c)+(270:0.34)$) {\footnotesize $c$};
		\node at ($(b1)+(270:0.34)$) {\footnotesize $b_1$};
		\node at ($(c1)+(270:0.34)$) {\footnotesize $c_1$};
		\node at ($(x)+(270:0.34)$) {\footnotesize $x$};
				\node at ($(y)+(270:0.34)$) {\footnotesize $y$};

	\end{pgfonlayer}
	
	\begin{pgfonlayer}{background}
				\draw[green, line width=2pt] (b) -- (c);
		\draw[green, line width=2pt] (c) -- (b1);
		\draw[green, line width=2pt] (b1) -- (c1);
		\draw[green, line width=2pt] (c1) -- (x);
		\draw[green, line width=2pt] (b) -- (w0);
		\draw[green, line width=2pt] (c) -- (w0);
		\draw[green, line width=2pt] (b1) -- (w0);
		\draw[green, line width=2pt] (c1) -- (w0);
		\draw[green, line width=2pt] (b1) -- (w1);
		\draw[green, line width=2pt] (c1) -- (w1);
		\draw[green, line width=2pt] (x) -- (w1);
		\draw[green, line width=2pt] (y) -- (w1);
				\draw[green, line width=2pt] (x) -- (y);
					\end{pgfonlayer} 

\qedge{(c)}{(w0)}{(b1)}{4.5pt}{1.5pt}{red!70!white}{red!70!white,opacity=0.2};
\qedge{(c1)}{(w1)}{(x)}{4.5pt}{1.5pt}{red!70!white}{red!70!white,opacity=0.2};
\qedge{(b)}{(w0)}{(c)}{4.5pt}{1.5pt}{red!70!white}{red!70!white,opacity=0.2};
\qedge{(b1)}{(w0)}{(c1)}{4.5pt}{1.5pt}{red!70!white}{red!70!white,opacity=0.2};
\qedge{(b1)}{(w1)}{(c1)}{4.5pt}{1.5pt}{red!70!white}{red!70!white,opacity=0.2};
\qedge{(x)}{(w1)}{(y)}{4.5pt}{1.5pt}{red!70!white}{red!70!white,opacity=0.2};		
	\end{tikzpicture}
	\caption {Quadruple $(w_0,b_1,c_1,w_1)$ that fulfills the conditions of Lemma~\ref{cl:1}, 
	where the \textcolor{green}{ link graph of $v$} is indicated in green and \textcolor{red}
	{hyperedges of $H$} in red.}
	\label{sfig:34}
\end{figure}

\begin{proof}
	For every $w\in N_v(b,c)\cap N_v(x,y)$ Lemma~\ref{lm:B2} states that there are at 
	least $\alpha n^2/2$ walks in $G_{vw}$ from $c$ to $x$ of length 3.
	Let 
	\[
		X_{b_1c_1}=\{ w\in N_v(b,c)\cap N_v(x,y) \colon cb_1c_1x \text{ is a walk in } G_{vw} \}
	\]
		for $b_1,c_1 \in V$. Thus
		\[
		\sum_{(b_1,c_1)\in V^2}|X_{b_1c_1}|\geq \alpha mn^2/2
	\]
		and therefore the Cauchy-Schwarz inequality yields that
		\[
		\sum_{(b_1,c_1)\in V^2}|X_{b_1c_1}|^2\geq \alpha^2 m^2n^2/4\,.
	\]
		If $b_1,c_1\in V$ and $w_0,w_1\in X_{b_1c_1}$, then $bcw_0b_1c_1w_1xy$ has the 
	desired properties.
\end{proof}

\begin{prop}\label{pr:1}
	There is an integer $K$, such that for all edges $abc, xyz\in E$ and vertices 
	$v\in N(a,b,c)\cap N(x,y,z)$ there are for some $k=k(abc,xyz)\leq K$ with 
	$k\equiv 1 \pmod {3}$ at least~$\Omega (n^k)$ many $(u_1, \ldots ,u_k)\in V^k$ for 
	which $abcu_1\ldots u_kxyz$ is 
	\begin{itemize}
		\item a walk in $H$
		\item  a squared walk in $L_v$\,.
	\end{itemize}
\end{prop}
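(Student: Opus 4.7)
My plan is to reduce the statement to counting tight walks in a suitable auxiliary $3$-uniform hypergraph and then exploit the connecting power of $G_v$ provided by Proposition~\ref{prop:zwei}, combined with the Cauchy--Schwarz arrangement underlying Lemma~\ref{cl:1}.

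First, consider the auxiliary $3$-uniform hypergraph $H_v^{*}$ on vertex set $V\setminus\{v\}$ whose edges are those triples $pqr\in E(H)$ for which $\{v,p,q,r\}$ spans a $K_4^{(3)}$. Unwinding definitions, a sequence $abcu_1\ldots u_k xyz$ is simultaneously a walk in $H$ and a squared walk in $L_v$ if and only if every three consecutive vertices of the sequence together with $v$ span a tetrahedron, that is, if and only if it is a tight walk in $H_v^{*}$. The task thus reduces to producing $\Omega(n^k)$ tight walks in $H_v^{*}$ from $abc$ to $xyz$ with $k$ internal vertices for some $k\le K$ with $k\equiv 1\pmod 3$.

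Since $v\in N(a,b,c)\cap N(x,y,z)$, both $c$ and $x$ lie in $V(G_v)$ and are distinct. Applying Proposition~\ref{prop:zwei} to $G_v$ yields some $t=t(c,x)\le \ell$ such that there are $\Omega(n^{t-1})$ walks $c=z_0z_1\cdots z_t=x$ of length $t$ in $G_v$. Each edge $z_{i-1}z_i$ of such a walk comes, by the definition of $G_v$, with at least $\beta n^2$ ``witness pairs'' $(\alpha_i,\beta_i)\in V^2$ such that $\{v,z_{i-1},\alpha_i,\beta_i\}$ and $\{v,z_i,\alpha_i,\beta_i\}$ are both $K_4^{(3)}$s. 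The plan is to convert each $G_v$-edge into a block of three fresh internal vertices of the tight walk in $H_v^{*}$, mimicking the local construction of Lemma~\ref{cl:1}: the base case $t=1$ is settled by that lemma directly (contributing four internal vertices of the shape $w_0,b_1,c_1,w_1$), while each further hop contributes one further block of three vertices. This yields $k=3t+1\equiv 1\pmod 3$ internal vertices with $k\le 3\ell+1=:K$.

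The main technical obstacle is that an edge $z_{i-1}z_i\in E(G_v)$ only controls ``local'' compatibility, whereas a tight walk in $H_v^{*}$ requires every three consecutive vertices to form an edge of $H_v^{*}$; in particular, the tail of one block and the head of the next must combine across each interior $z_i$ to give an edge of $H_v^{*}$, which is not automatic from the definition of $G_v$. Following the idea of Lemma~\ref{cl:1}, I will resolve this by introducing, at each boundary, a ``transition'' vertex $w_i$ chosen from a suitable set of common neighbours and selecting $w_i$ along with the adjacent witness pairs by a Cauchy--Schwarz/pigeonhole argument: summing the quadratic number of witness pairs on either side over admissible choices of $w_i$ and applying Cauchy--Schwarz shows that a positive proportion of the joint choices fit together. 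Multiplying the $\Omega(n^{t-1})$ walks in $G_v$ with the quadratic gain per block and the linear gain per transition delivers the required $\Omega(n^{k})$ lower bound on the number of valid $(u_1,\ldots,u_k)$, completing the argument.
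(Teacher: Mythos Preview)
Your high-level ingredients---walks in $G_v$, witness pairs, the four-vertex insertion of Lemma~\ref{cl:1}, and a Cauchy--Schwarz aggregation---are exactly those of the paper's proof, but your choice of endpoints for the $G_v$-walk creates a genuine gap at both boundaries.

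You run the $G_v$-walk from $z_0=c$ to $z_t=x$ and then assert that the base case $t=1$ ``is settled by [Lemma~\ref{cl:1}] directly''. It is not: Lemma~\ref{cl:1} connects the pairs $bc$ and $xy$ via four internal vertices, but its output count is $\alpha^2 m^2 n^2/4$ with $m=|N_v(b,c)\cap N_v(x,y)|$, and a $G_v$-edge between $c$ and $x$ gives no lower bound on $m$. Indeed $N_v(b,c)\cap N_v(x,y)$ is an intersection of six pair-neighbourhoods, and under $\delta_2(H)\ge(4/5+\alpha)n$ it may well be empty. Put differently, the witness pair $(\alpha_1,\beta_1)$ promised by the edge $cx\in E(G_v)$ satisfies $c,x\in N_v(\alpha_1,\beta_1)$, which says nothing about how $bc$ attaches to $\alpha_1\beta_1$ or how $\alpha_1\beta_1$ attaches to $xy$ in $H_v^{*}$. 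Your proposed ``one transition vertex per hop'' (leading to $k=3t+1$) hits the same wall: a single $w_0$ bridging $bc$ to $b_1c_1$ would have to lie in $N_v(b,c)\cap N_v(b_1,c_1)\cap N(c,b_1)$, a seven-condition intersection that need not be nonempty.

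The paper's remedy is to move the endpoints: take the $G_v$-walk from some $v_0\in N_v(b,c)$ to some $v_t\in N_v(x,y)$, pigeonholing over the $\Omega(n^2)$ such pairs to fix a common length $t$. Then for every sequence of witness pairs $(b_1,c_1,\ldots,b_t,c_t)$ the walk vertices $v_0,\ldots,v_t$ certify that each $I_i=N_v(b_i,c_i)\cap N_v(b_{i+1},c_{i+1})$ (with $b_0c_0=bc$ and $b_{t+1}c_{t+1}=xy$) is nonempty; summing $\prod_i|I_i|$ over all witness sequences \emph{is} the number of such $(3t{+}1)$-tuples and equals $\Omega(n^{3t+1})$. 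Lemma~\ref{cl:1} is then applied $t+1$ times---once between each consecutive pair among $bc,b_1c_1,\ldots,b_tc_t,xy$---and Cauchy--Schwarz converts $\sum\prod|I_i|$ into a lower bound on $\sum\prod|I_i|^2$, giving $\Omega(n^{6t+4})$ valid $(6t{+}4)$-tuples. Note in particular that the walk vertices $v_i$ never appear in the final sequence; they serve only to make the averaging work.
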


\begin{proof}
	Recall that in Proposition~\ref{prop:zwei} we found an integer $\ell$ and a 
	function $t\colon V^{(2)}\rightarrow [\ell]$ such that for all distinct $r,s\in V$ 
	there are $\Omega(n^{t(r,s)-1})$ walks of length $t(r,s)$ from $r$ to $s$ in $G_v$. 
	By the box principle there exists an integer $t\leq \ell$ such that the set 
	$\mathcal {Q}\subseteq N_v(b,c)\times N_v(x,y)$ of  all pairs 
	$(u,u')\in N_v(b,c)\times N_v(x,y)$ with $t(u,u')=t$ satisfies
		\[
		|\mathcal{Q}|\geq \dfrac{|N_v(b,c)|\cdot |N_v(x,y)|}{\ell}
		\overset{\eqref{eq:Nv}}{\geq}
		\dfrac{n^2}{16 \ell}\,.
	\]
		For each walk $v_0v_1\ldots v_t$ in $G_v$ there are by Definition \ref{dfn:Gv} 
	at least $(\beta n^2)^t$ many $(2t)$-tuples $(b_1,c_1,\ldots ,b_t,c_t)$ such that
	\begin{enumerate}[label=\rmlabel, series=B]
		\item \label{it:3161} $b_ic_iv\in E$ for $i=1,\ldots ,t$,
		\item \label{it:3162}$v_0\in N_v(b_1,c_1)$ and $v_t\in N_v(b_t,c_t)$, 
		\item \label{it:3163}$v_i\in N_v(b_i,c_i)\cap N_v(b_{i+1},c_{i+1})$ for 
		$i=1,\ldots ,t-1$\,.
	\end{enumerate}
	
\begin{figure}[ht]
\begin{tikzpicture}[scale=1]
	
	\coordinate (b) at (-9,0.8);
	\coordinate (c) at (-8,0.8);
	\coordinate (b1) at (-6,0.8);
	\coordinate (c1) at (-5,0.8);
	\coordinate (b2) at (-3,0.8);
	\coordinate (c2) at (-2,0.8);
	
	\coordinate (w0) at (-7,3);
	\coordinate (w1) at (-4,3);
	\coordinate (d) at (-1,1.75);
	\coordinate (d1) at (-0.5,1.75);
	\coordinate (d2) at (0,1.75);

\coordinate (w3) at (1,3);
	\coordinate (w2) at (4,3);
\coordinate (bt) at (2,0.8);
	\coordinate (ct) at (3,0.8);
\coordinate (x) at (5,0.8);
	\coordinate (y) at (6,0.8);		
   
	\begin{pgfonlayer}{front}
	
	\draw[green, line width=2pt] (b) -- (c);
		\draw[green, line width=2pt] (b2) -- (w1);
		\draw[green, line width=2pt] (b2) -- (c2);
		\draw[green, line width=2pt] (w1) -- (c2);
		\draw[green, line width=2pt] (c1) -- (b1);
		\draw[green, line width=2pt] (b) -- (w0);
		\draw[green, line width=2pt] (c) -- (w0);
		\draw[green, line width=2pt] (b1) -- (w0);
		\draw[green, line width=2pt] (c1) -- (w0);
		\draw[green, line width=2pt] (b1) -- (w1);
		\draw[green, line width=2pt] (c1) -- (w1);
		\draw[green, line width=2pt] (x) -- (w2);
		\draw[green, line width=2pt] (y) -- (w2);
		\draw[green, line width=2pt] (bt) -- (w2);
		\draw[green, line width=2pt] (ct) -- (w3);
		\draw[green, line width=2pt] (bt) -- (w3);
		\draw[green, line width=2pt] (ct) -- (w2);
		\draw[green, line width=2pt] (bt) -- (ct);
				\draw[green, line width=2pt] (x) -- (y);
	
		\fill (b) circle (2pt);
		
		\fill (c) circle (2pt);
		
		\fill (b1) circle (2pt);
		
		\fill (c1) circle (2pt);
		\fill (b2) circle (2pt);
		\fill (c2) circle (2pt);
		\fill (bt) circle (2pt);
		\fill (ct) circle (2pt);
		
		\fill (w2) circle (2pt);
		\fill (w3) circle (2pt);
		\fill (x) circle (2pt);
		\fill (y) circle (2pt);
		\fill (w0) circle (2pt);
		\fill (w1) circle (2pt);
		\fill (d) circle (1pt);
		\fill (d1) circle (1pt);
		\fill (d2) circle (1pt);
		
		\node at ($(w2)+(90:0.34)$) {\footnotesize  $v_t$};
	\node at ($(w3)+(90:0.34)$) {\footnotesize $v_{t-1}$};
	\node at ($(w1)+(90:0.34)$) {\footnotesize $v_1$};
	\node at ($(w0)+(90:0.34)$) {\footnotesize $v_0$};
		\node at ($(b)+(270:0.34)$) {\footnotesize $b$};
		\node at ($(c)+(270:0.34)$) {\footnotesize $c$};
		\node at ($(b1)+(270:0.34)$) {\footnotesize $b_1$};
		\node at ($(c1)+(270:0.34)$) {\footnotesize $c_1$};
		\node at ($(b2)+(270:0.34)$) {\footnotesize $b_2$};
		\node at ($(c2)+(270:0.34)$) {\footnotesize $c_2$};
			\node at ($(bt)+(270:0.34)$) {\footnotesize $b_t$};
		\node at ($(ct)+(270:0.34)$) {\footnotesize $c_t$};
		\node at ($(x)+(270:0.34)$) {\footnotesize $x$};
				\node at ($(y)+(270:0.34)$) {\footnotesize $y$};
						
	\end{pgfonlayer}
	
	\begin{pgfonlayer}{background}
			
					\end{pgfonlayer} 

\qedge{(b)}{(w0)}{(c)}{4.5pt}{1.5pt}{red!70!white}{red!70!white,opacity=0.2};
\qedge{(b1)}{(w0)}{(c1)}{4.5pt}{1.5pt}{red!70!white}{red!70!white,opacity=0.2};
\qedge{(b1)}{(w1)}{(c1)}{4.5pt}{1.5pt}{red!70!white}{red!70!white,opacity=0.2};
\qedge{(b2)}{(w1)}{(c2)}{4.5pt}{1.5pt}{red!70!white}{red!70!white,opacity=0.2};
\qedge{(bt)}{(w2)}{(ct)}{4.5pt}{1.5pt}{red!70!white}{red!70!white,opacity=0.2};
\qedge{(bt)}{(w3)}{(ct)}{4.5pt}{1.5pt}{red!70!white}{red!70!white,opacity=0.2};
\qedge{(x)}{(w2)}{(y)}{4.5pt}{1.5pt}{red!70!white}{red!70!white,opacity=0.2};
	\end{tikzpicture}
	\caption {A $(3t+1)$-tuple $(v_0, v_1,\ldots ,v_t,b_1,c_1,\ldots ,b_t,c_t)\in V^{3t+1}$ 
	satisfying \ref{it:3161}, \ref{it:3162}, \ref{it:3163}, and \ref{it:3164}, 
	where the \textcolor{green}{ link graph of $v$} is indicated in green 
	and \textcolor{red}{hyperedges of $H$} in red.}
	\label{fig:35}
	\end{figure}
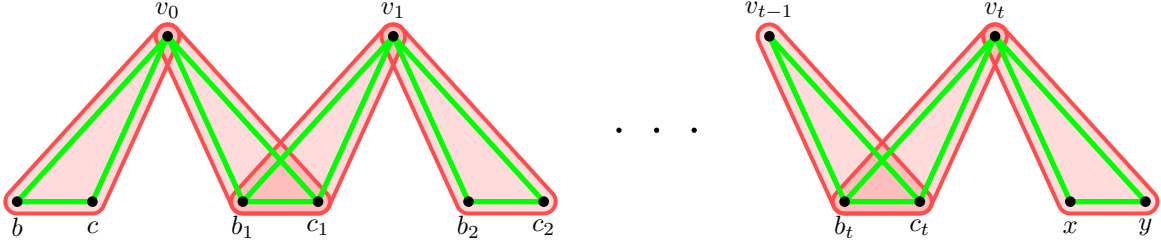

	Consequently, there are at least 
		\[
		\dfrac{n^2}{16\ell}\cdot \Omega (n^{t-1})\cdot (\beta n^2)^t=\Omega (n^{3t+1})
	\]
		$(3t+1)$-tuples $(v_0,v_1,\ldots ,v_t,b_1,c_1,\ldots ,b_t,c_t)\in V^{3t+1}$ 
	satisfying \ref{it:3161}, \ref{it:3162}, \ref{it:3163} as well as
	\begin{enumerate}[resume*=B]
		\item \label{it:3164} $v_0\in N_v(b,c)$ and $v_t\in N_v(x,y)$\,.
	\end{enumerate}
	On the other hand, we can also write the number of these $(3t+1)$-tuples as
		\[
		\sum\limits_{\seq{v} \in\Psi}
			|I_0(\seq{v})| \cdot |I_1(\seq{v})| \cdot \ldots \cdot |I_{t}(\seq{v})| \,,
	\]
		where
		\[
		\Psi =\{(b_1,c_1,\ldots , b_t,c_t)\in V^{2t}\colon  b_ic_iv\in E \text{ for } 
			i=1, \ldots ,t\}
	\]
		and for fixed $\seq{v}=(b_1,c_1,\ldots ,b_t,c_t)\in \Psi$
	\begin{itemize}
		\item  $I_0(\seq{v})=N_v(b,c)\cap N_v(b_{1},c_{1})$
		\item $I_i(\seq{v})=N_v(b_{i},c_{i})\cap N_v(b_{i+1},c_{i+1})$ 
			for $i=1,\ldots ,t-1$
		\item $I_{t}(\seq{v})=N_v(b_{t},c_t)\cap N_v(x,y)\, .$
	\end{itemize}
	Altogether we have thereby shown that
		\begin{equation}\label{eq:4}
 		\sum\limits_{\seq{v} \in\Psi}
		|I_0(\seq{v})| \cdot |I_1(\seq{v})| \cdot \ldots \cdot 
		|I_{t}(\seq{v})|\geq \Omega ( n^{3t+1})\, .
	\end{equation}
		Due to \eqref{eq:4} and Lemma \ref{cl:1} there are at least
		\begin{align*}
 		&\sum\limits_{\seq{v} \in\Psi} 
			\Omega (|I_0(\seq{v})|^{2}n^{2})
			\cdot \ldots \cdot 
			\Omega (|I_{t}(\seq{v})|^{2}n^{2})\\
 		&\geq \Omega (n^{2t+2})  \sum\limits_{\seq{v} \in\Psi} 
			(|I_0(\seq{v})|\cdot \ldots \cdot 
			|I_{t}(\seq{v})|)^2\\
 		&\geq \Omega (n^{2t+2})  \dfrac{\Big (\sum\limits_{\seq{v} \in\Psi} 
			|I_0(\seq{v})|\cdot \ldots \cdot 
				|I_{t}(\seq{v})| \Big )^2}{|\Psi|} \\
 		&\geq \Omega (n^{2t+2})\Big (\dfrac{\Omega ( n^{3t+1})}{ n^{t}} \Big ) ^{2}
			=\Omega (n^{6t+4})
	\end{align*}
		$(6t+4)$-tuples, which fulfill the conditions of 
	Proposition~\ref{pr:1}. Since $6t+4\equiv 1 \pmod {3}$ this concludes the proof.
\end{proof}

\begin{dfn}
	We call a sequence of vertices $v_1\ldots v_h$ a {\it squared $v$-walk} from $abc$ to $xyz$ 
	with $h$ {\it interior vertices} if $abcv_1\ldots v_hxyz$ is a walk in $H$ and a squared 
	walk in $L_v$.
\end{dfn}

\begin{prop}\label{pr:2}
	For all $abc, xyz\in E$ and $v\in N(a,b,c)\cap N(x,y,z)$ there are for some 
	$k'=k'(abc,xyz,v)\leq K+2$ with $k '\equiv 0 \pmod {3}$ at least $\Omega ( n^{k '})$ 
	many squared $v$-walks with~$k '$ interior vertices from $abc$ to $xyz$.
\end{prop}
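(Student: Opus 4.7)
The plan is to derive Proposition~\ref{pr:2} from Proposition~\ref{pr:1} by prepending two extra interior vertices to the walks produced there. Proposition~\ref{pr:1} supplies walks with $k \leq K$ interior vertices and $k \equiv 1 \pmod{3}$; adding two more vertices puts us at $k' = k+2 \equiv 0 \pmod{3}$ with $k' \leq K+2$, which is exactly what the statement demands. The only real content is to choose the two extra vertices so that the new windows in the walk still form $K_4^{(3)}$'s with $v$.

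First I would select auxiliary vertices $p, q \in V$ such that both $bcpv$ and $cpqv$ are copies of $K_4^{(3)}$ in $H$. By the definition of $N_v(\cdot,\cdot)$ it suffices to pick $p \in N_v(b,c)$ and then $q \in N_v(c,p)$, and inequality~\eqref{eq:Nv} supplies $\Omega(n^2)$ admissible pairs (even after forbidding the $O(1)$ vertices in $\{a,x,y,z\}$). For each such pair the triple $cpq$ is an edge of $H$ and $v \in N(c,p,q) \cap N(x,y,z)$, so Proposition~\ref{pr:1} applies to $(cpq, xyz)$ and yields some $k(p,q) \leq K$ with $k(p,q) \equiv 1 \pmod{3}$ and $\Omega(n^{k(p,q)})$ tuples $(u_1, \ldots, u_{k(p,q)})$ for which $cpq\,u_1 \ldots u_{k(p,q)}\, xyz$ is a walk in $H$ and a squared walk in $L_v$.

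Prepending $a, b$ yields the candidate sequence $abc\,pq\,u_1 \ldots u_{k(p,q)}\, xyz$. Its consecutive three-vertex windows are $abc$, $bcp$, $cpq$, followed by those from the Proposition~\ref{pr:1} walk. The first and the last windows are $K_4^{(3)}$'s with $v$ by the hypothesis $v \in N(a,b,c) \cap N(x,y,z)$; the windows $bcp$ and $cpq$ are $K_4^{(3)}$'s with $v$ by our choice of $p$ and $q$; the rest come for free from Proposition~\ref{pr:1}. Hence the whole sequence is a squared $v$-walk from $abc$ to $xyz$ with exactly $k(p,q) + 2$ interior vertices.

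Finally I would pigeonhole over the at-most-$\lceil K/3\rceil + 1$ possible values of $k(p,q)$ to isolate a single value $k_0$ that arises for $\Omega(n^2)$ pairs $(p,q)$, producing $\Omega(n^2) \cdot \Omega(n^{k_0}) = \Omega(n^{k_0+2})$ distinct walks; distinctness is automatic because the first five vertices of each walk already determine $p$ and $q$. Setting $k' = k_0 + 2$ then finishes the proof. I do not anticipate any genuine obstacle: the hidden constant in Proposition~\ref{pr:1} depends only on the global parameters $\alpha, \beta, \gamma, \ell, K$, so the bound is uniform over the $\Omega(n^2)$ choices of $(p,q)$, and everything else reduces to the bookkeeping of window conditions sketched above.
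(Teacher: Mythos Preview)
Your proposal is correct and follows essentially the same approach as the paper: the paper likewise chooses $d\in N_v(b,c)$ and $e\in N_v(c,d)$, applies Proposition~\ref{pr:1} to $(cde,xyz)$, prepends $d,e$ to the resulting walks, and then pigeonholes over the finitely many possible values of $k$ to fix a common length. Your verification of the window conditions and the distinctness remark are accurate, and the pigeonhole bound you state is in fact slightly sharper than the one the paper writes down.
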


\begin{proof}
	We choose vertices $d\in N_v(b,c)$ and $e\in N_v(c,d)$, and with Proposition \ref{pr:1} 
	we find at least $\Omega (n^k)$ many squared $v$-walks from $cde$ to $xyz$, where
	$k=k(cde,xyz)\leq K$ and $k\equiv 1 \pmod 3$. Notice that if $u_1\ldots u_k$ is such 
	a walk, then $deu_1\ldots u_k$ is a squared $v$-walk from $abc$ to $xyz$. 
	Since $|N_v(b,c)|,|N_v(c,d)|\geq n/4$ holds by \eqref{eq:Nv}, there are  for some 
	$k\leq K$ with $k\equiv 1 \pmod 3$ at least $\frac{n^2/16}{K}=\Omega (n^2)$ pairs 
	$(d,e)$ with $k(cde,xyz)=k$. Now altogether there are $\Omega (n^{k+2})$ 
	squared $v$-walks from $abc$ to $xyz$ with $k+2$ interior vertices.
	This implies Proposition \ref{pr:2}, since $k+2\equiv 0 \pmod 3$.
\end{proof}
	
\begin{lemma}\label{lm:Lc}
	If $abc, xyz\in E$ and $|N(a,b,c)\cap N(x,y,z)|=m$, then there is an integer 
	$t=t(abc,xyz)\leq (K+2)/3$ such that at least $\Omega (m^{t+1}n^{3t})$ squared 
	walks from $abc$ to $xyz$ with  $4t+1$ interior vertices exist.
\end{lemma}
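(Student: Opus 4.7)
The plan is to apply Proposition~\ref{pr:2} to every vertex $v\in N(a,b,c)\cap N(x,y,z)$, pigeonhole on the interior length $k'(v)$ to fix a common value $3t$, and then transform each squared $v$-walk into a squared walk in $H$ of length $4t+1$ by inserting $t+1$ pivot vertices between the interior vertices of the $v$-walk. The extra factor $m^{t+1}$ (beyond the straightforward bound $\Omega(m\,n^{3t})$) is harvested by letting the pivots vary over $N(a,b,c)\cap N(x,y,z)$.

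For every $v\in N(a,b,c)\cap N(x,y,z)$, Proposition~\ref{pr:2} provides some $k'(v)\leq K+2$ with $k'(v)\equiv 0\pmod 3$ and at least $\Omega(n^{k'(v)})$ squared $v$-walks from $abc$ to $xyz$. Since $k'(v)$ attains at most $(K+2)/3$ values, the pigeonhole principle yields $t=t(abc,xyz)\leq(K+2)/3$ and a set $S\subseteq N(a,b,c)\cap N(x,y,z)$ with $|S|\geq\Omega(m)$ on which $k'(v)=3t$ uniformly. For each $v\in S$ and each squared $v$-walk $w_1\dots w_{3t}$, I consider sequences of the shape
\[
a\,b\,c\,V_1\,w_1\,w_2\,w_3\,V_2\,w_4\,w_5\,w_6\,V_3\,\dots\,V_{t+1}\,x\,y\,z,
\]
which have exactly $4t+1$ interior vertices. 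Every four-window of this sequence contains exactly one pivot $V_i$, so each $K_4^{(3)}$-condition decomposes into (i)~a triple of three consecutive fillers (or a boundary triple involving $abc$ or $xyz$), which is an edge of $H$ by the walk-in-$H$ property of the source walk $abc\,w_1\dots w_{3t}\,xyz$, and (ii)~three triples of the form $V_i w_a w_b$ requiring $V_i\in N(w_a,w_b)$. Choosing $V_1=\dots=V_{t+1}=v$ always works because the pair $\{w_a,w_b\}$ lies in some four-window of the source walk and hence $w_a w_b\in E(L_v)$.

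Crucially, the four-windows containing different pivots are pairwise disjoint, so given fixed fillers the $t+1$ pivots can be chosen independently, each subject only to $O(1)$ pair-neighborhood constraints. I plan to show via an averaging argument, exploiting $\delta_2(H)\geq(4/5+\alpha)n$, that for a positive fraction of the $\Omega(n^{3t})$ squared $v$-walks and every $i\in[t+1]$, the set $P_i(w)\cap S$ of admissible pivots at position $i$ inside $S$ has size at least $\Omega(m)$. Concretely, each individual pair-neighborhood constraint removes only a bounded fraction of $S$ on average over the squared $v$-walks, analogously to the Cauchy-Schwarz step at the end of the proof of Proposition~\ref{pr:1}.

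Summing $\prod_{i=1}^{t+1}|P_i(w)\cap S|\geq\Omega(m^{t+1})$ over the $\Omega(n^{3t})$ squared $v$-walks and over $v\in S$ yields $\Omega(m^{t+2}n^{3t})$ quadruples $(v,w,V)$. Each resulting squared walk $W$ in $H$ with fillers $w$ and pivots $V$ is produced by this procedure once for every $v\in S$ for which $w$ is a squared $v$-walk, and this overcounting is bounded by $|S|\leq m$; dividing, we still obtain at least $\Omega(m^{t+1}n^{3t})$ distinct squared walks from $abc$ to $xyz$ with $4t+1$ interior vertices. The main obstacle is the averaging argument of the previous paragraph: controlling $\prod_{i}|P_i(w)\cap S|$ uniformly in $i$ requires careful bookkeeping of how the $O(1)$ pair-neighborhood constraints interact with the restriction $V_i\in N(a,b,c)\cap N(x,y,z)$ and with the squared-walk structure imposed on $w$.
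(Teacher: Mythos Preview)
Your structural insight is correct and matches the paper exactly: for a fixed filler sequence $P=(u_1,\dots,u_{3t})$, any vertex $v$ for which $P$ is a squared $v$-walk can be placed at \emph{any} of the $t+1$ pivot positions, and distinct choices of $(P,V_1,\dots,V_{t+1})$ give distinct squared walks. The difficulty lies solely in the counting, and there your proposed averaging step does not go through.

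You want to show that for a positive fraction of squared $v$-walks $w$ one has $|P_i(w)\cap S|\ge\Omega(m)$, and you plan to argue that ``each individual pair-neighbourhood constraint removes only a bounded fraction of $S$'' using $\delta_2(H)\ge(4/5+\alpha)n$. But a single constraint $V_i\in N(w_a,w_b)$ can exclude up to $(1/5-\alpha)n$ vertices from $S$, and when $m=o(n)$ this may wipe out all of $S$. The set $S$ is defined purely by the length-pigeonholing and carries no information about how it sits inside arbitrary pair-neighbourhoods; moreover the lemma is later applied precisely in situations where $m$ may be much smaller than $n$ (the sets $I_i(\seq{v})$ in the proof of Proposition~\ref{pr:CL}). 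So the pair-degree hypothesis alone cannot rescue this step.

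The paper avoids the issue by reversing the order of summation. Writing $X_P=\{v\in S: P\text{ is a squared }v\text{-walk}\}$, one has $X_P\subseteq P_i(P)\cap S$ for every $i$, so every $(t+1)$-tuple in $X_P^{t+1}$ is a valid pivot choice. The total count is therefore at least $\sum_P |X_P|^{t+1}$, and since $\sum_P|X_P|=\sum_{v\in S}\#\{P\}\ge\Omega(mn^{3t})$ the power-mean inequality gives
\[
\sum_{P\in V^{3t}}|X_P|^{t+1}\ge n^{3t}\Bigl(\tfrac{1}{n^{3t}}\sum_P|X_P|\Bigr)^{t+1}=\Omega(m^{t+1}n^{3t})\,.
\]
This is exactly the ``Cauchy--Schwarz step'' you invoke from Proposition~\ref{pr:1}, just with exponent $t+1$ instead of $2$; it requires no pair-degree input at all, no preliminary choice of $v$, and no overcounting to divide out. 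Your framework can in fact be repaired along these lines --- bounding $\prod_i|P_i(w)\cap S|\ge|X_w|^{t+1}$ and then applying convexity to $\sum_w|X_w|^{t+2}$ --- but this amounts to the paper's argument with a redundant outer sum over $v\in X_w$.
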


\begin{proof}
	For every $w\in N(a,b,c)\cap N(x,y,z)$ Proposition \ref{pr:2}~states that for some 
	integer $k ' =k ' (w)\leq~K+2$ with $k ' \equiv 0 \pmod 3$ there are at least 
	$\Omega (n^{k '})$ many squared $w$-walks from $abc$ to $xyz$ with $k'$ interior vertices. 
	By the box principle there exists an 
	integer $k ''\leq K+2$ with $k '' \equiv 0 \pmod 3$  such that the set 
	$\mathcal{Q}\subseteq N(a,b,c)\cap N(x,y,z)$ of all vertices 
	$w'\in N(a,b,c)\cap N(x,y,z)$ with $k ' (w)=k ''$ satisfies
		\[
		|\mathcal{Q}|\geq \dfrac{|N(a,b,c)\cap N(x,y,z)|}{K+2 }= \dfrac{m}{K+2} \,.
	\]
	 	For $P=(u_1, \ldots , u_{k''})\in V^{k''}$ let $X_P\subseteq \mathcal{Q}$ be the 
	set of vertices $u\in {\mathcal Q}$ such that $P$ is a squared $u$-walk from $abc$ to $xyz$. 
	Since $|\mathcal{Q} |\geq m/(K+2)$, the average size of $X_P$ is at least 
	$\Omega (m/(K+2))=\Omega (m)$ by Proposition~\ref{pr:2} and double counting. 
	Since
		\[
		\dfrac{ \sum_{P\in V^{k''}} X_P^{k ''/3+1}}{n^{k''}}
		\geq 
		\Big (\dfrac{\sum_{ P\in V^{k''}} X_P}{n^{k''}}\Big )^{k ''/3+1}
		\geq 
		\Omega ( m^{k ''/3+1}) \,,
	\]
		we get
		\[
		\sum_{P\in V^{k''}} X_P^{k ''/3+1}\geq \Omega (m^{k ''/3+1} n^{k ''})\,. 
	\]
	Since $k'' \equiv 0 \pmod 3$ and every ordered $k''$-tuple $P$ of vertices 
	gives rise to $X_P^{k ''/3+1}$ squared walks from $abc$ to $xyz$ with $4k''/3+1$
	interior vertices, this implies Lemma \ref{lm:Lc} with~$t=k''/3$. 
\end{proof}

Finally we come to the main result of this section stated earlier as Proposition~\ref{p:cl}.

\begin{prop}[Connecting Lemma]\label{pr:CL}
	Given $\alpha>0$, 
	there are an integer $M$ and $\vartheta _*>0$, such that for all sufficiently large
	hypergraphs $H=(V,E)$ with $\delta_2(H)\geq (4/5+\alpha)|V|$ and all disjoint triples 
	$(a,b,c)$ and $(x,y,z)$ with $abc,xyz\in E$ there exists some $m<M$ for which there are at 
	least $\vartheta _* n^m$ squared paths from $abc$ to $xyz$ with $m$ internal vertices.
\end{prop}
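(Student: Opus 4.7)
My plan is to reduce the Connecting Lemma to Lemma~\ref{lm:Lc} by constructing a short chain of edges $abc=e_0,e_1,\ldots,e_s=xyz$ in $H$ along which consecutive edges share two vertices. The key quantitative observation is that whenever two edges $e,e'\in E$ share two vertices, the minimum pair-degree hypothesis yields
\[
	|N(e)\cap N(e')|\ \geq\ 5\delta_2(H)-4n\ \geq\ 5\alpha n \,,
\]
so Lemma~\ref{lm:Lc} applied to such a pair produces $\Omega(n^{4t+1})$ squared walks with $4t+1$ internal vertices---matching the desired order of magnitude in the Connecting Lemma.

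To build the chain, I would invoke Proposition~\ref{prop:033}. Pick $u_0\in N(a,b,c)$ and $u_s\in N(x,y,z)$, noting that each of these sets has size at least $(2/5+3\alpha)n$; Proposition~\ref{prop:033} provides at least $\Omega(n^{s-1})$ walks of bounded length $s\leq\ell$ in $G_3$ from $u_0$ to $u_s$. For each $G_3$-edge $u_iu_{i+1}$, the definition furnishes $\geq\beta n^3$ bridging triples $T$ with $T\cup\{u_i\},T\cup\{u_{i+1}\}\in K_4^{(3)}$. Among these bridging triples one can select $e_{i+1}$ sharing two vertices with $e_i$ (and at the boundaries also with $abc$ or $xyz$), since every extra ``shared vertex'' constraint imposes only a constant number of additional pair-degree conditions, and the simultaneous intersection remains of size $\Omega(\alpha n)>0$ by the usual union bound on pair-neighbourhoods. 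At every transition one has $\Omega(n)$ free choices, which will be crucial for the counting.

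Applying Lemma~\ref{lm:Lc} to each consecutive pair $(e_i,e_{i+1})$ produces $\Omega(n^{4t_i+1})$ squared walks with $4t_i+1$ internal vertices. These segments can be concatenated end-to-end: consecutive squared walks share the three vertices of the common edge $e_i$, so every $4$-window either lies entirely within a single segment or meets the adjacent edge in at least three vertices, and the $K_4^{(3)}$ property is preserved throughout the join. The resulting squared walks from $abc$ to $xyz$ have some number of internal vertices $m\leq M$, with $M=M(\alpha,\ell,K)$ a constant, and there are $\Omega(n^m)$ of them.

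Finally, since $m$ is a constant independent of $n$, the number of these squared walks with a repeated internal vertex is $O(n^{m-1})=o(n^m)$; hence at least $\vartheta_*n^m$ of them are genuine squared paths, which is the required conclusion. The most delicate step will be the chain construction: selecting the bridging triples along the $G_3$-walk so that the required two-vertex overlaps hold at every transition and also with the boundary triples $abc$ and $xyz$. Once that is carried out, the rest---iterated application of Lemma~\ref{lm:Lc}, concatenation, and the walks-to-paths reduction---follows routinely from counts already established in the previous subsections.
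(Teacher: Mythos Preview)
Your proposal contains a fatal gap in the walks-to-paths reduction. Since consecutive edges $e_i,e_{i+1}$ in your chain share two vertices by design, the concatenated walk
\[
	e_0,\ [\mathrm{int}_0],\ e_1,\ [\mathrm{int}_1],\ \ldots,\ e_{s-1},\ [\mathrm{int}_{s-1}],\ e_s
\]
has each three-vertex block $e_i$ (for $1\le i\le s-1$) repeating two vertices already listed in the block $e_{i-1}$. Thus \emph{every} walk you construct has built-in repetitions among its $m$ internal vertex-slots, so your claim that only $O(n^{m-1})$ of them fail to be paths is false---none of them is a path. Relatedly, since each intermediate $e_i$ contributes only one genuinely free vertex, the count you actually obtain is $\Omega(n^{m-2(s-1)})$, not $\Omega(n^m)$.

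The chain construction itself is also problematic: asking a bridging triple for the $G_3$-edge $u_iu_{i+1}$ to contain two prescribed vertices $q,r\in e_i$ forces in particular $qru_{i+1}\in E$, a relation between vertices fixed at two independent earlier stages that neither the pair-degree hypothesis nor the definition of $G_3$ guarantees. (This second issue alone would be repairable---under $\delta_2(H)\ge(4/5+\alpha)n$ one can exhibit a tight walk $a,b,c,w,x,y,z$ directly and bypass $G_3$---but the repetition problem above would remain.) The paper avoids both difficulties by letting the intermediate edges $a_ib_ic_i$ be unconstrained and hence generically vertex-disjoint: the sizes $|I_i|=|N(e_i)\cap N(e_{i+1})|$ are then not individually controlled, but summing the Lemma~\ref{lm:Lc} contribution $\prod_i|I_i|^{t_i+1}$ over all chains and applying a power-mean inequality to the bound $\sum_{\seq{v}}\prod_i|I_i|=\Omega(n^{4t+1})$ recovers the correct order $\Omega(n^m)$, after which discarding $O(n^{m-1})$ walks with accidental coincidences yields genuine paths.
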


\begin{proof}
	Recall that in Proposition \ref{prop:033} we found an integer $\ell$ and a function 
	$t\colon V^{(2)}\rightarrow [\ell]$ such that for all distinct $r,s\in V$ there are 
	$\Omega(n^{t(r,s)-1})$ walks of length $t(r,s)$ from $r$ to $s$ in~$G_3$. 
	By the box principle there exists an integer $t\leq \ell$ such that the set 
	$\mathcal{Q}\subseteq N(a,b,c)\times N(x,y,z)$ of pairs 
	$(u,u')\in N(a,b,c)\times N(x,y,z)$ with $t(u,u')=t$ satisfies 
		\[
		|\mathcal{Q}|\geq \dfrac{|N(a,b,c)|\cdot |N(x,y,z)|}{\ell}\geq \dfrac{n^2}{16 \ell}\,.
	\]
		For each walk $v_0v_1\ldots v_t$ in $G_3$ there are by Definition~\ref{dfn:G3} at least 
	$(\beta n^3)^t$ many $(3t)$-tuples $(a_1,b_1,c_1,\ldots ,a_t ,b_t,c_t)$ such that
	\begin{enumerate}[label=\rmlabel, series=C]
		\item \label{it:3201} $a_ib_ic_i\in E$ for $i=1,\ldots ,t$
		\item \label{it:3202}$v_0\in N(a_1,b_1,c_1)$ and $v_t\in N(a_t,b_t,c_t)$ 
		\item \label{it:3203}$v_i\in N(a_i,b_i,c_i)\cap N(a_{i+1},b_{i+1},c_{i+1})$ 
			for $i=1,\ldots ,t-1$\,.
	\end{enumerate}
	Consequently, there are at least 
		\[
		\dfrac{n^2}{16\ell}\cdot \Omega (n^{t-1}) \cdot (\beta n^3)^t=\Omega (n^{4t+1})
	\]
		$(4t+1)$-tuples $(v_0,\ldots ,v_t,a_1,b_1,c_1,\ldots ,a_t ,b_t,c_t)\in V^{4t+1}$ 
	satisfying~\ref{it:3201},~\ref{it:3202},~\ref{it:3203} as well as
	\begin{enumerate}[resume*=C]
		\item \label{it:3204} $v_0\in N(a,b,c)$ and $v_t\in N(x,y,z)$\,.
	\end{enumerate}
		
\begin{figure}[ht]
\begin{tikzpicture}[scale=1]
	
	\coordinate (a) at (-10,0.8);
	\coordinate (b) at (-9,0);
	\coordinate (c) at (-8,0.8);
	\coordinate (a1) at (-7,0.8);
	\coordinate (b1) at (-6,0);
	\coordinate (c1) at (-5,0.8);
	\coordinate (a2) at (-4,0.8);
	\coordinate (b2) at (-3,0);
	\coordinate (c2) at (-2,0.8);
	
	\coordinate (w0) at (-7.5,3);
	\coordinate (w1) at (-4.5,3);
	\coordinate (d) at (-1.5,1.75);
	\coordinate (d1) at (-1,1.75);
	\coordinate (d2) at (-0.5,1.75);

\coordinate (w3) at (0.5,3);
	\coordinate (w2) at (3.5,3);
	\coordinate (at) at (1,0.8);
\coordinate (bt) at (2,0);
	\coordinate (ct) at (3,0.8);
\coordinate (x) at (4,0.8);
	\coordinate (y) at (5,0);	
	\coordinate (z) at (6,0.8);			
   
	\begin{pgfonlayer}{front}
	
\draw[green, line width=2pt] (b) -- (c);
		\draw[green, line width=2pt] (b2) -- (a2);
		\draw[green, line width=2pt] (b2) -- (c2);
						\draw[green, line width=2pt] (a) -- (b);
		\draw[green, line width=2pt] (a1) -- (b1);
		\draw[green, line width=2pt] (c1) -- (b1);
		
												\draw[green, line width=2pt] (bt) -- (at);
		\draw[green, line width=2pt] (bt) -- (ct);
		\draw[green, line width=2pt] (y) -- (z);
		\draw[green, line width=2pt] (x) -- (y);
		\draw[green, line width=2pt] (a) -- (c);
		\draw[green, line width=2pt] (a1) -- (c1);
		\draw[green, line width=2pt] (a2) -- (c2);
		\draw[green, line width=2pt] (at) -- (ct);
		\draw[green, line width=2pt] (x) -- (z);

		\fill (a) circle (2pt);		
		\fill (b) circle (2pt);
		\fill (c) circle (2pt);
		\fill (a1) circle (2pt);
		\fill (b1) circle (2pt);
		
		\fill (c1) circle (2pt);
		\fill (a2) circle (2pt);
		\fill (b2) circle (2pt);
		\fill (c2) circle (2pt);
		\fill (at) circle (2pt);
		\fill (bt) circle (2pt);
		\fill (ct) circle (2pt);
		
		\fill (w2) circle (2pt);
		\fill (w3) circle (2pt);
		\fill (x) circle (2pt);
		\fill (y) circle (2pt);
		\fill (z) circle (2pt);
		\fill (w0) circle (2pt);
		\fill (w1) circle (2pt);
		\fill (d) circle (1pt);
		\fill (d1) circle (1pt);
		\fill (d2) circle (1pt);
		
		\node at ($(w2)+(90:0.36)$) {\footnotesize  $v_{t}$};
	\node at ($(w3)+(90:0.34)$) {\footnotesize $v_{t-1}$};
	\node at ($(w1)+(90:0.36)$) {\footnotesize $v_1$};
	\node at ($(w0)+(90:0.36)$) {\footnotesize $v_0$};
	\node at ($(a)+(270:0.34)$) {\footnotesize $a$};
		\node at ($(b)+(270:0.34)$) {\footnotesize $b$};
		\node at ($(c)+(270:0.34)$) {\footnotesize $c$};
		\node at ($(a1)+(270:0.34)$) {\footnotesize $a_1$};
		\node at ($(b1)+(270:0.34)$) {\footnotesize $b_1$};
		\node at ($(c1)+(270:0.34)$) {\footnotesize $c_1$};
		\node at ($(a2)+(270:0.34)$) {\footnotesize $a_2$};
		\node at ($(b2)+(270:0.34)$) {\footnotesize $b_2$};
		\node at ($(c2)+(270:0.34)$) {\footnotesize $c_2$};
			\node at ($(at)+(270:0.34)$) {\footnotesize $a_t$};
			\node at ($(bt)+(270:0.34)$) {\footnotesize $b_t$};
		\node at ($(ct)+(270:0.34)$) {\footnotesize $c_t$};
		\node at ($(x)+(270:0.34)$) {\footnotesize $x$};
				\node at ($(y)+(270:0.34)$) {\footnotesize $y$};
		\node at ($(z)+(270:0.34)$) {\footnotesize $z$};
						
	\end{pgfonlayer}
	
	\begin{pgfonlayer}{background}
	
	\end{pgfonlayer} 

\qedge{(b)}{(a)}{(c)}{4.5pt}{1.5pt}{red!70!white}{red!70!white,opacity=0.2};
\qedge{(b1)}{(a1)}{(c1)}{4.5pt}{1.5pt}{red!70!white}{red!70!white,opacity=0.2};
\qedge{(b2)}{(a2)}{(c2)}{4.5pt}{1.5pt}{red!70!white}{red!70!white,opacity=0.2};
\qedge{(bt)}{(at)}{(ct)}{4.5pt}{1.5pt}{red!70!white}{red!70!white,opacity=0.2};
\qedge{(y)}{(x)}{(z)}{4.5pt}{1.5pt}{red!70!white}{red!70!white,opacity=0.2};
\redge{(a1)}{(w0)}{(c1)}{(b1)}{6pt}{1.5pt}{orange!70!white}{orange!70!white,opacity=0.2};
\redge{(a1)}{(w1)}{(c1)}{(b1)}{6pt}{1.5pt}{orange!70!white}{orange!70!white,opacity=0.2};
\redge{(a)}{(w0)}{(c)}{(b)}{6pt}{1.5pt}{orange!70!white}{orange!70!white,opacity=0.2};
\redge{(a2)}{(w1)}{(c2)}{(b2)}{6pt}{1.5pt}{orange!70!white}{orange!70!white,opacity=0.2};
\redge{(at)}{(w3)}{(ct)}{(bt)}{6pt}{1.5pt}{orange!70!white}{orange!70!white,opacity=0.2};
\redge{(at)}{(w2)}{(ct)}{(bt)}{6pt}{1.5pt}{orange!70!white}{orange!70!white,opacity=0.2};
\redge{(x)}{(w2)}{(z)}{(y)}{6pt}{1.5pt}{orange!70!white}{orange!70!white,opacity=0.2};

	\end{tikzpicture}
	\caption {A $(4t+1)$-tuple 
	$(v_0,\ldots ,v_t,a_1,b_1,c_1,\ldots ,a_t,b_t,c_t)\in V^{4t+1}$ 
	satisfying~\ref{it:3201},~\ref{it:3202},~\ref{it:3203},~and \ref{it:3204}, 
	where orange quadruples indicate a \textcolor{orange}{copy of~$K_4^{(3)}$}, 
	\textcolor{red}{hyperedges of $H$ }are indicated in red, and green pairs are 
	in the \textcolor{green}{ link graph }of the corresponding $v_i$.}
	\label{fig:36}
	\end{figure}
	
	On the other hand, we can also write the number of these $(4t+1)$-tuples as
		\[
		 \sum\limits_{\seq{v} \in\Psi}
		 |I_0(\seq{v})| \cdot |I_1(\seq{v})| \cdot \ldots \cdot |I_{t}(\seq{v})| \,,
	\]
		where
		\[
		\Psi =\{(a_1,b_1,c_1,\ldots , a_t,b_t,c_t)\in V^{3t}\colon 
			a_ib_ic_i\in E \text{ for } i=1,\ldots, t \}
	\]
		and for fixed
	$\seq{v}=(a_1,b_1,c_1,\ldots ,a_t,b_t,c_t)\in \Psi$
	\begin{itemize}
		\item  $I_0(\seq{v})=N(a,b,c)\cap N(a_1,b_1,c_1)$
		\item $I_i(\seq{v})=N(a_{i},b_{i},c_{i})\cap N(a_{i+1},b_{i+1},c_{i+1})$ 
			for $i=1,\ldots ,t-1$
		\item $I_{t}(\seq{v})=N(a_t,b_{t},c_t)\cap N(x,y,z)$
	\end{itemize}
	Altogether we have thereby shown that
		\begin{equation*}\label{eq:6}
 		\sum\limits_{\seq{v} \in\Psi}
		|I_0(\seq{v})| \cdot |I_1(\seq{v})| 
		\cdot \ldots \cdot |I_{t}(\seq{v})|
		\geq 
		\Omega (n^{4t+1}) \,.
	\end{equation*}
		Lemma~\ref{lm:Lc} gives us for every $\seq{v} \in\Psi$ some integers 
	\begin{itemize}
		\item $t_0(\seq{v})=t(abc, a_1b_1c_1)$
		\item $t_i(\seq{v})=t(a_ib_ic_i, a_{i+1}b_{i+1}c_{i+1})$ for $i=1, 2, \ldots, t-1$
		\item and $t_t(\seq{v})=t(a_tb_tc_t, xyz)$.
	\end{itemize}
	By the box principle there are $\Psi_\star\subseteq \Psi$ and a $(t+1)$-tuple 
	$(t_0, \ldots, t_t)\in [1, (K+2)/3]^{t+1}$ such that 
	\begin{equation}\label{eq:666}
 		\sum\limits_{\seq{v} \in\Psi_\star}
		|I_0(\seq{v})| \cdot |I_1(\seq{v})| 
		\cdot \ldots \cdot |I_{t}(\seq{v})|
		\geq 
		\Omega (n^{4t+1}) 
	\end{equation}
	and $t_i(\seq{v})=t_i$ for all $i\in\{0, \ldots, t\}$ and $\seq{v}\in\Psi_*$.
	Set  $m=4t+4\sum_{i=0}^{t} t_i+1$.
	Due to Lemma~\ref{lm:Lc} there are at least
		\begin{align*}
 		&\sum\limits_{\seq{v} \in\Psi_\star} 
		\Omega (|I_0(\seq{v})|^{t_0+1}n^{3t_0})
		\cdot \ldots \cdot 
		\Omega (|I_{t}(\seq{v})|^{t_{t}+1}n^{3t_{t}})\\
 		&=\Omega (n^{3\sum_{i=0}^{t} t_i})  
			\sum\limits_{\seq{v} \in\Psi_\star
			}
			|I_0(\seq{v})|^{t_0+1}\cdot \ldots \cdot 
			|I_{t}(\seq{v})|^{t_{t}+1}
	\end{align*}
	 	$m$-tuples, which up to repeated vertices fulfill the conditions of 
		Proposition~\ref{pr:CL}. Let $T=\max(t_0,\ldots ,t_{t})$. Since 
		\[
		|I_i(\seq{v})|^{T+1}
		=
		|I_i(\seq{v})|^{t_i+1} \cdot 
			|I_i(\seq{v})|^{T-t_i}
		\leq 
		|I_i(\seq{v})|^{t_i+1} \cdot n^{T-t_i} \,,
	\]
		we get 
		\begin{align*}
		n^{T(t+1)-\sum_{i=0}^{t} t_i} & \sum\limits_{\seq{v}\in \Psi_\star} 
			\prod _{i=0}^{t}  |I_i(\seq{v})|^{t_i+1} 
		=\sum\limits_{\seq{v}\in \Psi_\star} \prod _{i=0}^{t}  n^{T-t_i}
			|I_i(\seq{v})|^{t_i+1} 
 		\geq \sum\limits_{\seq{v} \in\Psi_\star}  
			|I_0(\seq{v})|^{T+1}\cdot \ldots \cdot 
			|I_{t}(\seq{v})|^{T+1}\\
  		&=\sum\limits_{\seq{v} \in\Psi_\star} 
			(|I_0(\seq{v})| \cdot \ldots \cdot 
			|I_{t}(\seq{v})|)^{T+1}
		\geq \Bigg ( \dfrac {\sum\limits_{\seq{v} \in\Psi_\star}  
			|I_0(\seq{v})|\cdot \ldots \cdot 
			|I_{t}(\seq{v})| }{|\Psi_\star |}\Bigg )^{T+1} \cdot |\Psi_\star|\\
 		&\overset{\eqref{eq:666}}{\geq} \Big (\dfrac{\Omega  (n^{4t+1})}{ n^{3t}} \Big ) ^{T+1}\cdot  n^{3t}
		\geq \Omega  (n^{3t+(t+1)(T+1)}) \,,
	\end{align*}
		which implies that
		\begin{align*}
		&\Omega (n^{3\sum_{i=0}^{t} t_i}) \sum\limits_{\seq{v} \in\Psi_\star}
			|I_0(\seq{v})|^{t_0+1}
			\cdot \ldots \cdot 
			|I_{t}(\seq{v})|^{t_{t}+1}\\
	&\geq \Omega ( n^{3t+(t+1)+\sum_{i=0}^{t} t_i+3\sum_{i=0}^{t} t_i})=\Omega (n^{m}) \,.
	\end{align*}
		At most $O(n^{m-1})$ tuples can fail being paths due to repeated vertices, thus there are 
	$\Omega(n^m)$ squared paths from $abc$ to $xyz$. This proves Proposition \ref{pr:CL} with 
	$M=\lceil 4\ell + 4(\ell+1)\cdot \frac{K+2}{3}+2\rceil$, since 
	$m=4t+4\sum_{i=0}^{t} t_i+1\leq 4\ell + 4(\ell+1)\cdot \frac{K+2}{3}+1$.
\end{proof}

\section{Reservoir Set} \label{sec:res} 

Our treatment of the reservoir set follows closely the approach of \cite{R3S2}. The 
setup discussed in this section is that we have 
\begin{enumerate}
	\item[$\bullet$] $1\gg \alpha\gg M^{-1} \gg \theta_* \gg n^{-1}$ such that the conclusion 
		of the connecting lemma holds, 
	\item[$\bullet$] and a hypergraph $H=(V, E)$ with $|V|=n$ 
		and $\delta_2(H)\ge \bigl(\frac45+\alpha\bigr)n$.	
\end{enumerate}

\begin{prop}\label{prop:r41}
	There exists a reservoir set $\mathcal {R} \subseteq V$ with 
	$\frac{\vartheta_*^2 n}{2}\leq |\mathcal {R}|\leq \vartheta_*^2 n$, 
	such that for all disjoint triples $(a,b,c)$ and $(x,y,z)$ with $abc,xyz\in E$ 
	there exists $m<M$ such that there are at least $\vartheta_{*}|\cR|^m/2$ connecting 
	squared paths in $H$ all of whose $m$ internal vertices belong to $\mathcal{R}$.
\end{prop}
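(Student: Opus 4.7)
The plan is a probabilistic construction of $\mathcal R$. Set $p := \vartheta_*^2$ and include each vertex of $V$ in $\mathcal R$ independently with probability $p$, so that $\mathbb E|\mathcal R| = \vartheta_*^2 n$. I will argue that with strictly positive probability the resulting random set $\mathcal R$ satisfies both
(a)~$\tfrac12\vartheta_*^2 n \le |\mathcal R| \le \vartheta_*^2 n$, and
(b)~for every ordered pair of disjoint edge-triples $(a,b,c),(x,y,z)$ with $abc, xyz\in E$, the number of squared paths from $abc$ to $xyz$ whose internal vertices all lie in $\mathcal R$ is at least $\vartheta_*|\mathcal R|^m/2$, where $m = m(abc,xyz) < M$ is the integer supplied by the Connecting Lemma~\ref{p:cl}.

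Property (a) is routine: Chernoff gives $|\mathcal R|\ge \tfrac12\vartheta_*^2 n$ with probability $1-\exp(-\Omega(n))$, while the median of $\mathrm{Bin}(n,p)$ lies within $1$ of $\mathbb E|\mathcal R|$, so $|\mathcal R|\le \vartheta_*^2 n$ holds with probability at least $\tfrac12 - o(1)$. For (b), fix an ordered pair $(abc,xyz)$ and let $m<M$ and $N \ge \vartheta_* n^m$ be as in Proposition~\ref{p:cl}. Let $X := X_{abc,xyz}$ denote the number of these paths all of whose $m$ internal vertices lie in $\mathcal R$. Then
\[
\mu := \mathbb E[X] = N p^m \ge \vartheta_*(\vartheta_*^2 n)^m.
\]

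The heart of the argument is a lower-tail concentration inequality for $X$, for which I plan to invoke Janson's inequality for monotone subset-containment events. Writing
\[
\Delta := \sum_{\substack{P\ne P'\\ V_P^{\mathrm{int}} \cap V_{P'}^{\mathrm{int}} \ne \varnothing}} p^{|V_P^{\mathrm{int}} \cup V_{P'}^{\mathrm{int}}|},
\]
and grouping pairs by the size $j \ge 1$ of their internal-vertex overlap, the crude bound $N\le n^m$ together with the geometric estimate $\sum_{j\ge 1}(M^2/(np))^j \le 2M^2/(np)$ yields $\Delta \le C(M)(np)^{2m-1}$ for some constant $C(M)$. Hence $\mu^2/(\mu+\Delta) = \Omega(\vartheta_*^4 n/C(M))$, and Janson's lower-tail inequality gives
\[
\mathbb P\bigl[X \le \tfrac12\mu\bigr] \le \exp\bigl(-\Omega(\vartheta_*^4 n)\bigr).
\]
A union bound over the at most $n^6$ ordered pairs of edge-triples then shows (b) fails with probability $o(1)$. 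Combining with (a), both events hold simultaneously with probability at least $\tfrac12 - o(1) > 0$, so the desired $\mathcal R$ exists. On this event $|\mathcal R|^m \le (\vartheta_*^2 n)^m$, hence $\vartheta_*|\mathcal R|^m/2 \le \vartheta_*(\vartheta_*^2 n)^m/2 \le \mu/2 \le X$, which is exactly the conclusion.

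The main obstacle is precisely this concentration step: a second-moment/Chebyshev estimate yields variance only of order $1/n$, far too weak to survive the union bound over $\Theta(n^6)$ pairs. Janson's inequality, exploiting the fact that each event $\{V_P^{\mathrm{int}} \subseteq \mathcal R\}$ is an upward-monotone subset-containment event with support of bounded size~$m<M$, supplies the exponential decay required.
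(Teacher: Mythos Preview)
Your argument is correct; the differences from the paper's proof are in two technical choices rather than in the overall strategy.

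First, the paper takes $p=(1-\tfrac{3}{10M})\vartheta_*^2$, i.e.\ slightly below $\vartheta_*^2$, so that \emph{both} bounds $\tfrac12\vartheta_*^2 n\le |\mathcal R|\le \vartheta_*^2 n$ hold with probability $1-o(1)$ by Chernoff. Your choice $p=\vartheta_*^2$ forces you to rely on the median of the binomial to get the upper bound with probability only $\tfrac12-o(1)$; this still suffices, and in fact makes the final comparison $\vartheta_*|\mathcal R|^m/2\le \mu/2$ immediate, whereas the paper has to unwind the factor $(4/3)^{1/M}$.

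Second, and more substantively, for the concentration of $X$ the paper uses the Azuma--Hoeffding (bounded differences) inequality: flipping the membership of a single vertex changes $X$ by at most $mn^{m-1}$, giving
\[
\mathbb P\bigl(X\le \tfrac23\mathbb E[X]\bigr)\le \exp\Bigl(-\frac{2\,\mathbb E[X]^2}{9\,n\,(mn^{m-1})^2}\Bigr)=\exp(-\Omega(n))\,.
\]
This avoids any dependency computation. Your route via Janson's lower-tail inequality also works, since the events $\{V_P^{\mathrm{int}}\subseteq\mathcal R\}$ are of the required subset-containment type and your estimate $\Delta\le C(M)(np)^{2m-1}$ is valid; it yields $\exp(-\Omega(\vartheta_*^4 n))$, which is equally good for the union bound. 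The Azuma--Hoeffding route is arguably the more elementary one here, as it sidesteps the $\Delta$-calculation and does not require the (slightly less standard) lower-tail form of Janson's inequality; on the other hand, your approach makes the dependency structure explicit and would generalise more readily if the Lipschitz constant were not as favourable.
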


\begin{proof}
	Consider a random subset $\mathcal{R} \subseteq V$ with elements included independently 
	with probability
		\[
		p=\Big(1-\dfrac{3}{10M}\Big)\vartheta_*^2 \,.
	\]
		Therefore $|\mathcal{R}|$ is binomially distributed and Chernoff's inequality yields
		\begin{equation}\label{eq:7}
		\PP(|\mathcal{R} |< \vartheta_*^2 n/2 )=o(1)\, .
	\end{equation}
		Since 
		\begin{equation*}		\vartheta_*^2 n \geq (4/3)^{1/M} pn\geq (1+c) \EE[|\mathcal{R} |] 
	\end{equation*}
		for some sufficiently small $c=c(M)>0$, we have
		\begin{equation}\label{eq:8}
		\PP(|\mathcal{R} |> \vartheta_*^2 n )
		 \leq 
		 \PP\big(|\mathcal{R} |> (4/3)^{1/M} pn \big ) =o(1) \,.
	\end{equation}
	 	
	The connecting lemma ensures that for all triples $(a,b,c)$ and $(x,y,z)$ with $avc, xyz\in E$
	there are at least~$\vartheta_{*}n^m$ squared paths connecting them with $m=m(abc,xyz)<M$ internal
	vertices. 
 
 	Let $X=X((a,b,c),(x,y,z))$ be the random variable counting the number of squared paths 
	from $(a,b,c)$ to $(x,y,z)$ with $m$ internal vertices in $\mathcal{R}$. 
	We get
 		\begin{equation}\label{eq:9}
		\EE [X]\geq p^m \vartheta_{*}n^m\, .
 	\end{equation}
		Including or not including a particular vertex into $\mathcal{R}$ affects the 
	random variable $X$ by at most $mn^{m-1}$, wherefore the Azuma-Hoeffding 
	inequality (see, e.g.,~\cite{JLR00}*{Corollary~2.27}) implies
		\begin{align} \label{eq:10}
		\PP\big(X\leq \tfrac{2}{3} \vartheta_{*}(pn)^m \big)&
			\overset {\eqref{eq:9}}{\leq}  
			\PP\big (X\leq \tfrac{2}{3}\EE [X]\big )\nonumber \\
		&~\leq \exp \Bigg(-\frac{2\EE [X]^2}{9n(mn^{m-1})^2}\Bigg)=\exp (-\Omega (n)) \,.  
	\end{align}
		Since there are at most $n^6$ pairs of triples that we have to consider, the union bound 
	and \eqref{eq:7}, \eqref{eq:8} tell us that asymptotically almost surely the reservoir 
	$\mathcal{R}$ satisfies 
		\begin{equation}\label{equ:46}
		\dfrac{\vartheta_*^2 n}{2}\leq |\mathcal{R}|
		\leq (4/3)^{1/M}pn
		\leq \vartheta_*^2 n 
	\end{equation}
		and
 		\begin{equation} \label{equ:47}
		X((a,b,c),(x,y,z))\geq  \dfrac{2}{3} \vartheta_{*}(pn)^m 
	\end{equation}
		for all pairs of disjoint edges $abc,xyz\in E$.
	In particular, there is some $\mathcal{R}\subseteq V$ satisfying \eqref{equ:46} 
	and \eqref{equ:47}.
	Now it follows that
		\[
		X((a,b,c),(x,y,z))
		\geq  
		\frac 23 \theta_* \bigl((4/3)^{-1/M}|\cR|\bigr)^m
		\ge
		\vartheta_{*} |\mathcal{R}|^m/2
	\]
	  	holds for all disjoint $abc,xyz\in E$ as well, meaning that $\mathcal{R}$ has the desired 
	properties.
\end{proof}

\begin{lemma}\label{lm:res42}
	Let $\mathcal{R}\subseteq V$ be a reservoir set as given by Proposition~\ref{prop:r41}
	and let $\mathcal{R'}\subseteq \mathcal{R}$ be an arbitrary subset 
	of size at most $\vartheta_{*}^4n$. Then for all disjoint triples $(a,b,c)$ and $(x,y,z)$ 
	with $abc, xyz\in E$ there exist for some $m<M$ a connecting squared path with $m$ 
	internal vertices in $H$ 
	whose internal vertices belong to $\mathcal{R}\setminus \mathcal{R'}$.
\end{lemma}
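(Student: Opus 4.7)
The plan is to deduce Lemma~\ref{lm:res42} from Proposition~\ref{prop:r41} by a simple first-moment argument: among the many paths whose internal vertices lie in $\cR$, only a negligible fraction can touch the tiny forbidden set $\cR'$. More concretely, I would fix disjoint triples $(a,b,c)$ and $(x,y,z)$ with $abc,xyz\in E$ and apply Proposition~\ref{prop:r41} to produce some integer $m<M$ together with a family $\mathcal{P}$ of at least $\vartheta_*|\cR|^m/2$ squared paths from $(a,b,c)$ to $(x,y,z)$ whose $m$ internal vertices all belong to $\cR$.

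Next I would estimate the number of paths in $\mathcal{P}$ that use at least one vertex of $\cR'$. Choosing first which of the $m$ internal positions is occupied by a vertex of $\cR'$ (at most $M$ choices), then the $\cR'$-vertex itself (at most $|\cR'|\le \vartheta_*^4 n$ choices), and finally any assignment of the remaining $m-1$ internal positions from within $\cR$ (at most $|\cR|^{m-1}$ choices) yields the crude upper bound
\[
	M\cdot \vartheta_*^4 n\cdot |\cR|^{m-1}.
\]
Since $|\cR|\ge \vartheta_*^2 n/2$ gives $n\le 2|\cR|/\vartheta_*^2$, this is at most $2M\vartheta_*^2 |\cR|^m$. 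By the hierarchy $1/M\gg \vartheta_*$ we have $4M\vartheta_*<1$, so
\[
	2M\vartheta_*^2 |\cR|^m < \tfrac{\vartheta_*}{2}\cdot |\cR|^m/2,
\]
which is strictly less than the size $\vartheta_*|\cR|^m/2$ of $\mathcal{P}$.

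Consequently there remains at least one (in fact, at least $\vartheta_*|\cR|^m/4$) squared path in $\mathcal{P}$ whose $m<M$ internal vertices all lie in $\cR\setminus \cR'$, delivering the required connecting path. There is no real obstacle here beyond the counting: the constants were chosen in Section~\ref{sec:idea} precisely so that absorbing, reservoir-restriction, and connecting arguments of this kind go through with a comfortable margin.
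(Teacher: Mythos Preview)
Your argument is correct and follows essentially the same route as the paper: invoke Proposition~\ref{prop:r41} to get $\vartheta_*|\cR|^m/2$ paths through $\cR$, then overcount the paths touching $\cR'$ by $m|\cR'|\,|\cR|^{m-1}$ (you use the cruder $M$ in place of $m$, which is harmless) and check via $|\cR|\ge \vartheta_*^2 n/2$ and $1/M\gg \vartheta_*$ that this is strictly smaller. One cosmetic slip: the displayed inequality $2M\vartheta_*^2|\cR|^m<\tfrac{\vartheta_*}{2}\cdot|\cR|^m/2$ actually requires $8M\vartheta_*<1$ rather than the stated $4M\vartheta_*<1$, but the hierarchy delivers either bound with room to spare.
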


\begin{proof}
	Let $m<M$ be such that there are $\theta_*|\cR|^m/2$ squared path from $(a, b, c)$ to 
	$(x, y, z)$ with $m$ internal vertices all of which belong to $\cR$. 
	Since $|\mathcal{R}|\geq \dfrac{\vartheta_*^2 n}{2}$ and 
	$\vartheta_*\ll M^{-1}$, we can arrange that 
		\[
		|\mathcal{R'}|\leq \vartheta_{*}^4n\leq \dfrac {\vartheta_{*}}{4m}|\mathcal{R}| \,.
	\]
		Every vertex in $\mathcal{R'}$ is a member of at most $m|\mathcal{R}|^{m-1}$ squared 
	paths with internal vertices in $\mathcal{R}$. Consequently, there are at least
		\[
		\dfrac{\vartheta_{*}}{2}|\mathcal{R}|^m-|\mathcal{R'}| m |\mathcal{R}|^{m-1}
		\geq 
		\dfrac{\vartheta_{*}}{2}|\mathcal{R}|^m- \dfrac {\vartheta_{*}}{4m}m |\mathcal{R}|^{m}>0
	\]
		such squared paths with all internal vertices in $\mathcal{R}\setminus \mathcal{R'}$.
\end{proof}

To conclude this section we remark that taken together Proposition~\ref{prop:r41} and 
Lemma~\ref{lm:res42} entail Proposition~\ref{p:res}.

\section{Absorbing Path} \label{sec:abs}
The goal of this section is to establish Proposition \ref{p:abs} which, let us recall, 
requires the minimum degree condition $\delta_2(H)\geq (4/5+\alpha) |V(H)|$. 
The common assumptions of all statements of this section are that we have
\begin{itemize}
	\item $1\gg \alpha \gg M^{-1}\gg \vartheta_* \gg n^{-1}$ such that the conclusion of the 
		connecting lemma holds,
	\item a hypergraph $H=(V,E)$ with $|V|=n$ and $\delta_2(H)\geq (4/5+\alpha) n$,
	\item and a reservoir set $\mathcal{R}\subseteq V$ satisfying, in particular, 
		that $|\mathcal{R}|\leq \vartheta_*^2n$.
\end{itemize}

\begin{dfn}\label{dfn:abs}
	Given a vertex $v\in V$ we say that a $6$-tuple $(a,b,c,d,e,f)\in (V\setminus \{ v\})^6$ 
	of distinct vertices is a \it{$v$-absorber} if
	$abcdef$ and $abcvdef$ are squared paths in $H$.
\end{dfn}
	
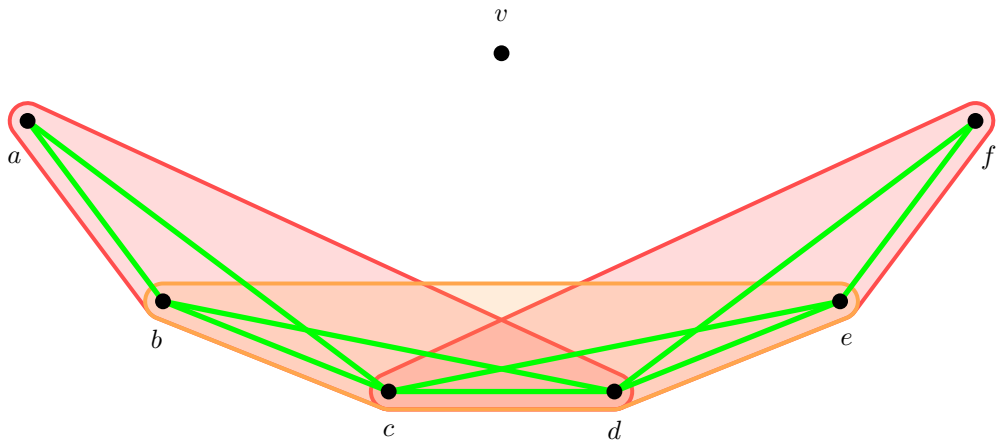
\begin{figure}[ht]
\begin{tikzpicture}[scale=1.5]
	
	\coordinate (a) at (-5.2,2.4);
	\coordinate (b) at (-4,0.8);
	\coordinate (c) at (-2,0);
	\coordinate (d) at (0,0);
	\coordinate (e) at (2,0.8);
	\coordinate (f) at (3.2,2.4);
	\coordinate (v) at (-1,3);

	\begin{pgfonlayer}{front}
	\draw[green, line width=2pt] (a) -- (b);
		\draw[green, line width=2pt] (b) -- (c);
														\draw[green, line width=2pt] (c) -- (d);
		\draw[green, line width=2pt] (e) -- (d);
		\draw[green, line width=2pt] (f) -- (e);
		\draw[green, line width=2pt] (a) -- (c);
		\draw[green, line width=2pt] (b) -- (d);
		\draw[green, line width=2pt] (c) -- (e);
		\draw[green, line width=2pt] (d) -- (f);
	
		\fill (a) circle (2pt);
		
		\fill (b) circle (2pt);
		
		\fill (c) circle (2pt);
		
		\fill (d) circle (2pt);
		
		\fill (e) circle (2pt);
		\fill (f) circle (2pt);
		\fill (v) circle (2pt);

	\node at ($(d)+(270:0.34)$) {\footnotesize $d$};
	\node at ($(v)+(90:0.34)$) {\footnotesize $v$};
		\node at ($(a)+(250:0.34)$) {\footnotesize $a$};
		\node at ($(b)+(260:0.34)$) {\footnotesize $b$};
		\node at ($(c)+(270:0.34)$) {\footnotesize $c$};
			\node at ($(e)+(280:0.34)$) {\footnotesize $e$};
				\node at ($(f)+(290:0.34)$) {\footnotesize $f$};
						
	\end{pgfonlayer}

\redge{(a)}{(d)}{(c)}{(b)}{4.5pt}{1.5pt}{red!70!white}{red!70!white,opacity=0.2};
\redge{(c)}{(f)}{(e)}{(d)} {4.5pt}{1.5pt}{red!70!white}{red!70!white,opacity=0.2};
\redge{(b)}{(e)}{(d)}{(c)}{4.5pt}{1.5pt}{orange!70!white}{orange!70!white,opacity=0.2};

	\end{tikzpicture}
	\caption {Example of a $v$-absorber, where the \textcolor{green}{link graph of $v$} is indicated in green and orange or red $4$-edges indicate a \textcolor{red}{copy of $K_4^{(3)}$}.}
	\label{fig:absorber}
	\end{figure}

Let us note that if a squared path $P$ contains some $v$-absorber $abcdef$ as a subpath 
and $v\not\in V(P)$, then we may insert $v$ into $P$ between $c$ and $d$ thus obtaining another
squared path $P'$ having the same end-triples as $P$ and with $V(P')=V(P)\cup\{v\}$. 
When using this argument, we say that we apply the {\it absorbing property} of $abcdef$.

\begin{lemma}\label{lm:vabsorb}
	For every $v\in V$ there are at least $\alpha^3 n^6$ many $v$-absorbers in 
	$(V\setminus \mathcal{R}) ^6$.
\end{lemma}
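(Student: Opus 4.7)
The plan is a direct greedy counting argument exploiting the high minimum pair-degree $\delta_2(H)\ge(\tfrac45+\alpha)n$. First I would unpack Definition~\ref{dfn:abs}: the two squared path conditions say that each of the $4$-sets $abcd$, $bcde$, $cdef$, $abcv$, $bcvd$, $cvde$, $vdef$ spans a $K_4^{(3)}$. This amounts to $19$ distinct triple-edge requirements, but almost all of them can be handled by choosing the six vertices in a carefully chosen order rather than all at once. Throughout the argument one removes the vertex $v$ and the reservoir $\mathcal{R}$ from the pool of available vertices; since $|\mathcal{R}|\le \vartheta_*^2n\ll\alpha n$, this loss is negligible at every step.

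The vertices are built up as follows. In step (i) pick an ordered pair $(c,d)\in (V\setminus(\mathcal{R}\cup\{v\}))^2$ with $cvd\in E$; since $|N(c,v)|\ge(\tfrac45+\alpha)n$ for each $c$, there are $\Omega(n^2)$ choices. In step (ii) pick $b$ with $bcd, bcv, bvd\in E$, i.e.\ $b\in N(c,d)\cap N(c,v)\cap N(d,v)$; this intersection of three pair-neighbourhoods has size at least $3(\tfrac45+\alpha)n-2n=(\tfrac25+3\alpha)n=\Omega(n)$. In step (iii) pick $e$ with $bce,bde,cde,cve,vde\in E$, i.e.\ $e\in N(b,c)\cap N(b,d)\cap N(c,d)\cap N(c,v)\cap N(d,v)$; the intersection of these five pair-neighbourhoods has size at least $5(\tfrac45+\alpha)n-4n=5\alpha n$, so after excluding the few already used vertices and $\mathcal{R}$ one retains $\Omega(\alpha n)$ candidates. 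Steps (iv) and (v) pick $a$ and $f$ symmetrically: $a\in N(b,c)\cap N(b,d)\cap N(c,d)\cap N(b,v)\cap N(c,v)$ to secure $abc,abd,acd,abv,acv\in E$, and $f\in N(c,d)\cap N(c,e)\cap N(d,e)\cap N(v,d)\cap N(v,e)$ to secure $cdf,cef,def,vdf,vef\in E$; each of these is again an intersection of five pair-neighbourhoods, yielding $\Omega(\alpha n)$ choices in each step.

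Multiplying the counts from the five steps produces at least $\Omega(n^2)\cdot \Omega(n)\cdot \Omega(\alpha n)\cdot \Omega(\alpha n)\cdot \Omega(\alpha n)=\Omega(\alpha^3 n^6)$ ordered $6$-tuples $(a,b,c,d,e,f)$, which for $n$ large enough comfortably exceeds $\alpha^3 n^6$. A quick bookkeeping check confirms that the $19$ edge conditions partition cleanly across the five steps, so every tuple produced this way is in fact a $v$-absorber lying in $(V\setminus\mathcal{R})^6$. The only real subtlety is the arithmetic of the five-fold intersection: one needs exactly $5\cdot(4/5)-4=0$ on the main term and $5\alpha$ on the slack, which is precisely why the threshold $4/5$ (and no smaller constant) suffices for the absorbing construction. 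Everything else is routine inclusion–exclusion.
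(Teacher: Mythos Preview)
Your argument is correct and follows essentially the same greedy counting approach as the paper: both proofs build the $6$-tuple vertex by vertex, using that any intersection of $k\le 5$ pair-neighbourhoods has size at least $k(\tfrac45+\alpha)n-(k-1)n$, and both arrive at the same leading constant $40\alpha^3 n^6$. The only cosmetic differences are the order of selection (the paper uses $a,b,c,d,e,f$ while you use $c,d,b,e,a,f$) and that the paper subtracts the overcount from repeated vertices and reservoir vertices at the end rather than excluding them step by step.
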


\begin{proof}
	Given $v\in V$ we choose the vertices of the 6-tuple in alphabetic order. 
	For the first vertex we have $n$ possible choices and for the second we still 
	have more than $4n/5 $ possibilities, since we only have the condition that $vab\in E$. 
	For the third vertex we already have 3 conditions, namely $abc, vbc, vac\in E$. 
	Consequently, we have more than $2n/5$ choices for $c$. For the vertices $d, e, f$ 
	we always have 5 conditions, so we have for each of them at least~$5\alpha n$ 
	possible choices. This implies that for given $v\in V$ we find more than 
		\[
		n\cdot 4n/5 \cdot 2n/5 \cdot (5\alpha n)^3 = 40\alpha^3n^6 
	\]
		6-tuples meeting all the requirements from the $v$-absorber definition except that 
	some of the 7 vertices $v, a, \ldots, f$ might coincide. There are at most 
	$\binom {7}{2}n^5=21n^5$ such bad 6-tuples and at most $6\vartheta_*^2n^6$ 
	members of $V^6$ can use a vertex from the reservoir. Consequently, the number of 
	$v$-absorbers in $(V\setminus \mathcal{R})^6$ is at least 
	$\bigl(40\alpha ^3-\frac{21}{n}-6\vartheta_*^2\bigr)n^6\geq \alpha^3 n^6$.
\end{proof}

\begin{lemma}\label{lm:F}
	There is a set $\mathcal{F}\subseteq (V\setminus \mathcal{R})^6$ with the 
	following properties:
	\begin{enumerate}
		\item \label{it:531} $|\mathcal{F}|\leq 8 \alpha ^{-3}\vartheta_*^2n$,
		\item \label{it:532} all vertices of every 6-tuple in $\mathcal{F}$ are 
			distinct and the 6-tuples in $\mathcal{F}$ are pairwise disjoint,
		\item \label{it:533} if $(a,b,c,d,e,f)\in \mathcal{F}$, then $abcdef$ is a squared 
		path in $H$
				\item \label{it:534} and for every $v\in V$ there are at least $2\vartheta_*^2n$ 
		many $v$-absorbers in $\mathcal{F}$.
	\end{enumerate}
\end{lemma}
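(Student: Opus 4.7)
The plan is to apply the probabilistic method. Let $\mathcal{A}_0\subseteq(V\setminus\mathcal{R})^6$ denote the set of all $6$-tuples $(a,b,c,d,e,f)$ of distinct vertices for which $abcdef$ is a squared path in $H$, and for every $v\in V$ let $\mathcal{A}_v\subseteq\mathcal{A}_0$ be the subset consisting of $v$-absorbers. Trivially $|\mathcal{A}_0|\leq n^6$, while Lemma~\ref{lm:vabsorb} supplies the lower bound $|\mathcal{A}_v|\geq\alpha^3 n^6$ for every $v$. The idea is to sample $\mathcal{A}_0$ randomly and then clean up overlaps.

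Form a random family $\mathcal{F}_0$ by including each member of $\mathcal{A}_0$ independently with probability $p=4\alpha^{-3}\vartheta_*^2 n^{-5}$. Three quantities must then be controlled simultaneously: first, the total size $|\mathcal{F}_0|$, which has expectation at most $4\alpha^{-3}\vartheta_*^2 n$; second, for each fixed $v$ the count $|\mathcal{F}_0\cap\mathcal{A}_v|$, whose expectation is at least $4\vartheta_*^2 n$; and third, the number of ordered pairs of distinct tuples in $\mathcal{F}_0$ sharing a vertex, which has expectation at most $36 p^2 n^{11}=O(\alpha^{-6}\vartheta_*^4 n)$, since at most $36n^{11}$ ordered pairs of distinct tuples in $\mathcal{A}_0\times\mathcal{A}_0$ overlap (choose the first tuple, then the pair of coinciding positions, then the remaining five coordinates of the second tuple). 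A standard application of Chernoff's inequality handles the first two items, combined with a union bound over $v\in V$ for the second, while Markov's inequality suffices for the third. With positive probability one therefore finds a realisation $\mathcal{F}_0$ with $|\mathcal{F}_0|\leq 8\alpha^{-3}\vartheta_*^2 n$, with $|\mathcal{F}_0\cap\mathcal{A}_v|\geq 3\vartheta_*^2 n$ for every $v\in V$, and with at most $O(\alpha^{-6}\vartheta_*^4 n)$ overlapping pairs.

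To finish, delete one tuple from each overlapping pair to obtain a pairwise disjoint subfamily $\mathcal{F}\subseteq\mathcal{F}_0$. The hierarchy $1\gg\alpha\gg 1/M\gg\vartheta_*$ guarantees $\alpha^{-6}\vartheta_*^4\ll\vartheta_*^2$, so at most $o(\vartheta_*^2 n)$ tuples get removed, which is negligible in comparison with the $3\vartheta_*^2 n$ absorbers surviving for each $v$. The resulting $\mathcal{F}$ satisfies the size bound~\eqref{it:531}, the disjointness and distinctness required by~\eqref{it:532}, the squared-path property~\eqref{it:533} inherited from membership in $\mathcal{A}_0$, and the absorber count~\eqref{it:534} via $3\vartheta_*^2 n - o(\vartheta_*^2 n)\geq 2\vartheta_*^2 n$. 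The main point requiring care is the interplay between the probability $p$ and the constants in the hierarchy; once $p$ is chosen so that $p n^6$ has the desired scale $\alpha^{-3}\vartheta_*^2 n$, the surplus factor $\alpha^3$ in Lemma~\ref{lm:vabsorb} provides exactly the slack needed for the union bound and the cleanup to succeed.
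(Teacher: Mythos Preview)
Your argument is correct and follows essentially the same route as the paper: choose each potential absorber independently with probability of order $\alpha^{-3}\vartheta_*^2 n^{-5}$, control the size and the per-vertex absorber counts (Chernoff plus union bound), bound the number of overlapping pairs (Markov), and then delete to enforce disjointness. The only cosmetic differences are that the paper samples from all of $(V\setminus\mathcal{R})^6$ and uses Markov rather than Chernoff for the size bound, whereas you sample directly from the set $\mathcal{A}_0$ of squared paths with distinct vertices; your choice makes properties~\eqref{it:532} and~\eqref{it:533} automatic and is slightly cleaner.
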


\begin{proof}
	Consider a random selection $\mathcal{X}\subseteq (V\setminus \mathcal{R})^6$ containing 
	each 6-tuple independently with probability $p=\gamma n^{-5}$, where 
	$\gamma = 4\vartheta_*^2/\alpha^3$. Since $\EE[|\mathcal{X}|]\leq pn^6=\gamma n$, 
	Markov's inequality yields
		\begin{equation}\label{eq:11}
		\PP(|\mathcal{X}|>2\gamma n)\leq 1/2\, .
	\end{equation}
		
	We call two distinct 6-tuples from $V^6$ {\it overlapping} if there is a vertex 
	occurring in both. There are at most $36n^{11}$ ordered pairs of overlapping 6-tuples. 
	Let $P$ be the random variable giving the number of such pairs both of whose components 
	are in $\mathcal{X}$. Since $\EE[P]\leq 36n^{11}p^2=36\gamma^2 n $ and 
	$12\gamma \leq \vartheta_*$, Markov's inequality yields
		\begin{equation}\label{eq:12}
		\PP (P> \vartheta_*^2n)\leq \PP(P>144\gamma^2n)\leq \dfrac{1}{4} \,	.
	\end{equation}
	
	In view of Lemma \ref{lm:vabsorb} for each vertex $v\in V$ the set $A_v$ containing 
	all $v$-absorbers in $(V\setminus \mathcal{R})^6$ has the property 
	$\EE[|A_v\cap \mathcal{X}|]\geq \alpha^3 n^6p=\alpha^3 \gamma n=4\vartheta_*^2n$.
	Since $|A_v\cap \mathcal{X}|$ is binomially distributed, Chernoff's inequality gives for 
	every $v\in V$
		\begin{equation}\label{eq:13}
		\PP(|A_v\cap \mathcal{X}|\leq 3\vartheta_*^2n)
		\leq 
		\exp (-\Omega(n))<\dfrac{1}{5n} \,.
	\end{equation}
	
	Owing to \eqref{eq:11}, \eqref{eq:12}, and \eqref{eq:13} there is an ``instance'' 
	$\mathcal{F}_\star$ of $\mathcal{X}$ satisfying the following:
	\begin{itemize}
		\item $|\mathcal{F}_\star|\leq 2\gamma n$,
		\item $\mathcal{F}_\star$ contains at most $\vartheta_*^2n$ ordered pairs of 
			overlapping $6$-tuples,
		\item and for every $v\in V$ the number of $v$-absorbers in $\mathcal{F}_\star$ 
			is at least $3\vartheta_*^2n$.
	\end{itemize}

	If we delete from $\mathcal{F}_\star$ all the 6-tuples containing some vertex more 
	than once, all that belong to an overlapping pair, and all violating \eqref{it:533}, 
	we get a set $\mathcal{F}$ which fulfills \eqref{it:531}, since 
	$|\mathcal{F}|\leq |\mathcal{F}_\star|$. The properties~\eqref{it:532} and~\eqref{it:533}
	hold by construction and for \eqref{it:534} we recall that $v$-absorbers 
	satisfy~\eqref{it:533} by definition. Therefore the set $\mathcal{F}$ has all the 
	desired properties. 
\end{proof}

We are now ready to prove Proposition~\ref{p:abs}, which we restate for the reader's convenience.

\begin{prop}[Absorbing path] \label{pr:abs}
	Let $1\gg \alpha \gg 1/M \gg \vartheta_*$ be such that the conclusion of the 
	connecting lemma holds, let $H=(V,E)$ be a sufficiently 
	large hypergraph with $|V|=n$ and $\delta_2(H)\geq (4/5+\alpha)n$,
	and let $\cR\subseteq V$ be a reservoir set as provided by Proposition~\ref{p:res}.
	There exists an (absorbing) squared path $P_A\subseteq H-\mathcal{R}$ such that 
	\begin{enumerate}
	\item\label{it:541}  $|V(P_A)|\leq \vartheta_*n$,
	\item\label{it:542}  and for every set $X\subseteq V\setminus V(P_A)$ 
		with $|X|\leq 2\vartheta_*^2n$ there 
		is a squared path in $H$ whose set of vertices is $V(P_A)\cup X$ and whose end-triples
		are the same as those of~$P_A$.
	\end{enumerate}
\end{prop}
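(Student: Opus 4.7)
The plan is to apply Lemma~\ref{lm:F} to obtain the family $\cF$ of $v$-absorbers and then stitch the resulting squared paths together through short connections drawn from $V\setminus\cR$. Concretely, I would enumerate $\cF=\{F_1,\ldots,F_k\}$ in an arbitrary order, writing $F_i=(a_i,b_i,c_i,d_i,e_i,f_i)$, and for each $i=1,\ldots,k-1$ apply the Connecting Lemma (Proposition~\ref{pr:CL}) to the disjoint edges $d_ie_if_i$ and $a_{i+1}b_{i+1}c_{i+1}$ to obtain a connecting squared path $Q_i$ with fewer than $M$ internal vertices, chosen so that these internal vertices lie in $V\setminus\cR$ and are disjoint from everything selected earlier. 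The concatenation $P_A=F_1Q_1F_2Q_2\cdots Q_{k-1}F_k$ is then a squared path in $H-\cR$ with end-triples $(a_1,b_1,c_1)$ and $(d_k,e_k,f_k)$, and the $K_4^{(3)}$-conditions are automatic across each junction because the overlapping triples $(d_i,e_i,f_i)$ and $(a_{i+1},b_{i+1},c_{i+1})$ already belong to the respective squared paths being joined.

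For the connecting step to succeed at every iteration I would argue as follows. Proposition~\ref{pr:CL} yields at least $\vartheta_*n^{m}$ squared paths with some $m<M$ internal vertices between any two disjoint edges, while at most $Mn^{m-1}$ of these can use any prescribed vertex. Hence we are free to avoid any set of forbidden vertices of size at most $\vartheta_*n/M$. After the first $i{-}1$ connections the set of used vertices has size at most $|\cR|+6k+M(k-1)\le\vartheta_*^2 n+(M+6)\cdot 8\alpha^{-3}\vartheta_*^2 n$, which is comfortably below $\vartheta_* n/M$ once $\vartheta_*$ is chosen sufficiently small against $\alpha$ and $1/M$. The very same estimate gives $|V(P_A)|\le(M+6)k\le 8(M+6)\alpha^{-3}\vartheta_*^2 n\le\vartheta_* n$, which is part~\eqref{it:541}.

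To verify the absorbing property~\eqref{it:542}, let $X\subseteq V\setminus V(P_A)$ with $|X|\le 2\vartheta_*^2 n$ and enumerate $X=\{v_1,\ldots,v_s\}$. I would process the vertices $v_j$ one at a time. At the $j$-th step, Lemma~\ref{lm:F}\eqref{it:534} supplies at least $2\vartheta_*^2 n$ many $v_j$-absorbers in $\cF$, of which only $j-1<2\vartheta_*^2 n$ have already been consumed by previous absorptions, so some unused $v_j$-absorber $(a,b,c,d,e,f)\in\cF$ remains. Because the 6-tuples in $\cF$ are pairwise disjoint, any not-yet-used 6-tuple still appears as an intact subpath of the current version of $P_A$, so one may apply its absorbing property to insert $v_j$ between $c$ and $d$, producing a squared path with the same end-triples and one additional internal vertex. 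After $s$ such moves we obtain a squared path on $V(P_A)\cup X$ with end-triples equal to those of $P_A$, as required.

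The main obstacle is the bookkeeping in the connection phase, namely the verification that the off-limits vertex set during construction of $P_A$ always remains below the threshold $\vartheta_*n/M$ coming out of the Connecting Lemma, which reduces to the single inequality $(M+6)\alpha^{-3}\vartheta_* M\ll 1$. Under the hierarchy $\vartheta_*\ll 1/M\ll\alpha$ this is automatic, so the rest is a routine combination of Lemma~\ref{lm:F} with the absorbing property of Definition~\ref{dfn:abs}.
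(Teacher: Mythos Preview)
Your proposal is correct and follows essentially the same approach as the paper: obtain the family $\cF$ from Lemma~\ref{lm:F}, connect its members one by one via the Connecting Lemma while avoiding $\cR$ and previously used vertices, and then absorb the vertices of $X$ greedily using the pairwise disjointness of the $6$-tuples in $\cF$. The only cosmetic difference is that the paper phrases the connection phase as a maximality argument rather than an explicit iteration, but the underlying calculation and the size bounds are the same.
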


\begin{proof}
	Let $\mathcal{F}\subseteq (V\setminus \mathcal{R})^6$ be as obtained in 
	Lemma~\ref{lm:F}. Recall that $\mathcal{F}$ is a family of at most 
	$8\alpha^{-3}\vartheta_*^2n$ vertex-disjoint squared paths with six vertices.

	We will prove that there is a path $P_A\subseteq H-\mathcal{R}$ with the 
	following properties:
	\begin{enumerate}[label=(\alph*)]
		\item \label{it:54a} $P_A$ contains all members of $\mathcal{F}$ as subpaths, 
		\item \label{it:54b} $|V(P_A)|\leq (M+6)|\mathcal{F}|$.
	\end{enumerate}
	Basically we will construct such a path $P_A$ starting with any member of $\mathcal{F}$ 
	by applying the connecting lemma $|\mathcal{F}|-1$ times, attaching one further part 
	from $\mathcal{F}$ each time. 

	Let $\mathcal{F}_*\subseteq \mathcal{F}$ be a maximal subset such that some path 
	$P_A^*\subseteq H-\mathcal{R}$ has the properties~\ref{it:54a} and~\ref{it:54b} 
	with $\mathcal{F}$ replaced by $\mathcal{F_*}$. Obviously $P_A^*\neq \emptyset $. 
	From \ref{it:54b} and $1\gg \alpha, M^{-1} \gg \vartheta_*$ we infer
		\begin{equation}\label{eq:14}
		|V(P_A^*)|\leq (M+6)|\mathcal{F}_*|\leq 2M|\mathcal{F}|
		\leq 
		16M\alpha ^{-3}\vartheta_*^2n\leq \vartheta_*^{3/2}n
	\end{equation}
		and thus our upper bound on the size of the reservoir leads to
		\begin{equation}\label{eq:15}
		|V(P_A^*)|+|\mathcal{R}|\leq 2\vartheta_*^{3/2}n\leq \dfrac{\vartheta_* n}{2M} \,.
	\end{equation}
		Assume for the sake of contradiction that $\mathcal{F}_*\neq \mathcal{F}$. 
	Let $(x,y,z)$ be the ending triple of~$P_A^*$ and let $P$ be an arbitrary path 
	in $\mathcal{F}\setminus \mathcal{F}_*$ with starting triple $(u,v,w)$. 
	Then the connecting lemma tells us that there are at least $\vartheta_*n^m$ 
	connecting squared paths with $m$ interior vertices, where $m=m(xyz,uvw)<M$. 
	By \eqref{eq:15} at least half of them are disjoint to $V(P_A^*)\cup \mathcal{R}$. 
	Any such connection gives us a path $P_A^{**}\subseteq H-\mathcal{R}$ starting 
	with~$P_A^*$, ending with $P$ and satisfying
		\[
		|V(P_A^{**})|=|V(P_A^*)|+m+|V(P)|\leq |V(P_A^*)|+ m+6\leq (M+6)(|\mathcal{F}_*|+1) \,. 
	\]
		So $\mathcal{F}_*\cup \{P\}$ contradicts the maximality of $\mathcal{F}_*$ and proves 
	that we have indeed $\mathcal{F}_*=\mathcal{F}$. Therefore there exists a path $P_A$ 
	with the properties \ref{it:54a} and \ref{it:54b}.
	
	As proved in \eqref{eq:14} this path satisfies condition \eqref{it:541} of 
	Proposition~\ref{pr:abs}. To establish \eqref{it:542} one absorbs the up to at 
	most $2\vartheta_*^2n$ vertices in $X$ one by one into $P_A$. This is possible 
	due to \ref{it:54a} combined with \eqref{it:534} from Lemma \ref{lm:F}.
	More precisely, we process the vertices in $X$ one by one, and whenever  dealing 
	with some $x\in X$ we pick an $x$-absorber in $P_A$ that has not been used before 
	and use its absorbing property (as explained after Definition~\ref{dfn:abs}). 
\end{proof}

\section{Almost spanning cycle} \label{sec:cyc}

This section is dedicated to the proof of Proposition~\ref{p:alg}, which is structured 
as follows. In Subsection~\ref{sec:61} we derive an ``approximate version'' 
of Pikhurko's $K_4^{(3)}$-factor theorem (see 
Lemma~\ref{lm:61}) by imitating his proof from \cite{Pi08}. This lemma leads to 
Proposition~\ref{p:alg} in the light of the hypergraph regularity method, which we recall 
in Subsection \ref{sec:62}.

\subsection{\texorpdfstring{$K_4^{(3)}$-tilings}{Tilings}}\label{sec:61}

The subsequent lemma will later be applied to a hypergraph obtained by means of the 
regularity lemma.

\begin{lemma}\label{lm:61}
	Let $t\geq 36$, $0<\alpha <1/4$ and $\alpha \gg \tau$.
	Given a hypergraph $G$ on $t$ vertices such that all but at most  $\tau t^2$ 
	unordered pairs $xy\in V^{(2)}$ of distinct vertices satisfy 
	$d(x,y)\geq (3/4+\alpha)t$, it is possible to delete at 
	most $2\sqrt{\tau}t+14$  vertices and find a $K_4^{(3)}$-factor afterwards.
\end{lemma}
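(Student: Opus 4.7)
The plan is to reduce the claim to (the existing proof of) Pikhurko's exact $K_4^{(3)}$-factor result from~\cite{Pi08} by first cleaning the hypergraph of a small set of ``bad'' vertices, so that the survivor satisfies an approximate minimum pair-degree hypothesis, and then imitating his iterative removal argument on the cleaned hypergraph.

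For the cleaning step, call a pair $xy\in V^{(2)}$ \emph{deficient} if $d_G(x,y)<(3/4+\alpha)t$, and call a vertex $v$ \emph{bad} if it belongs to at least $\sqrt{\tau}\,t$ deficient pairs. Since the total number of deficient pairs is at most $\tau t^2$, a double-counting argument shows that the set $B$ of bad vertices satisfies $|B|\leq 2\sqrt{\tau}\,t$. Set $V_0=V\setminus B$, $t_0=|V_0|$, and $G_0=G[V_0]$. Then $t_0\geq (1-2\sqrt{\tau})t$, every $v\in V_0$ is incident to at most $\sqrt{\tau}\,t$ deficient pairs within $V_0$, and every non-deficient pair $xy\subseteq V_0$ has
\[
d_{G_0}(x,y)\geq (3/4+\alpha)t-|B|\geq (3/4+\alpha/2)t_0,
\]
where the last inequality uses $\alpha\gg\tau$. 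Thus in $G_0$ almost every pair has pair-degree at least $(3/4+\alpha/2)t_0$, with only a tiny fraction (at most $\sqrt{\tau}\,t$ per vertex) of exceptions.

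The next step is to imitate Pikhurko's proof from~\cite{Pi08} on $G_0$: one iteratively selects and removes vertex-disjoint copies of $K_4^{(3)}$, and his counting argument shows that this process can be continued until fewer than $15$ vertices remain. Each step of his proof extracts a $K_4^{(3)}$ by counting over pairs whose pair-degree exceeds $(3/4+\alpha/2)t_0$, and this counting is robust enough that throwing away the at most $\sqrt{\tau}\,t_0\cdot t_0/2$ deficient pairs changes the estimates only by $O(\sqrt{\tau})t_0^3$, which is absorbed by the $\alpha/2$ slack thanks to $\alpha\gg\tau$. Consequently, after the iteration at most $14$ vertices of $V_0$ remain uncovered, and together with the $\leq 2\sqrt{\tau}\,t$ vertices in $B$ we delete at most $2\sqrt{\tau}\,t+14$ vertices in total, as required.

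The principal obstacle is justifying that Pikhurko's iterative argument continues to function under the weaker hypothesis that only \emph{almost} all pairs of $G_0$ satisfy the pair-degree bound, rather than \emph{all} of them. The resolution is essentially that each of his counting steps can be performed while deliberately avoiding the deficient pairs: since at each step the number of deficient pairs incident to any vertex is at most $\sqrt{\tau}\,t\ll \alpha t_0$, the restrictions ``use only non-deficient pairs'' cost only lower-order terms and the existence of the required $K_4^{(3)}$ at every stage is preserved.
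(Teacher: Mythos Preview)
Your high-level plan --- delete a few vertices so that every survivor lies in at most $\sqrt{\tau}\,t$ deficient pairs, then run Pikhurko's argument on the remainder --- is exactly the paper's strategy. But two concrete problems prevent your write-up from establishing the stated bound.

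First, Pikhurko's proof is \emph{not} an iterative greedy extraction of copies of $K_4^{(3)}$. It is a structural argument: one fixes a $\{K_2^{(3)},K_3^{(3)},K_4^{(3)}\}$-tiling of maximum weight (with weights $w_2=2$, $w_3=6$, $w_4=11$) and then, via a short case analysis on ``connections'' between tiles, shows that $\ell_3\le 3$ and $\ell_2\le 2$. Your paragraph about ``robust counting'' and ``lower-order terms'' does not engage with this mechanism, and the sentence ``this process can be continued until fewer than $15$ vertices remain'' describes a different (and unproved) claim. The paper actually carries out the weighted-tiling case analysis in the presence of deficient pairs; this is where the work is.

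Second, even granting that the weighted-tiling analysis goes through in $G_0$, your arithmetic does not yield $2\sqrt{\tau}\,t+14$. Your cleaning removes up to $2\sqrt{\tau}\,t$ vertices, but every vertex of $G_0$ still lies in up to $\sqrt{\tau}\,t$ deficient pairs, so the maximal tiling may miss up to $\sqrt{\tau}\,t+1$ vertices of $V_0$ (not just one); together with the at most $2\ell_2+3\ell_3\le 13$ vertices sitting in $K_2^{(3)}$- or $K_3^{(3)}$-tiles this gives $3\sqrt{\tau}\,t+14$ rather than $2\sqrt{\tau}\,t+14$. The paper avoids this extra $\sqrt{\tau}\,t$ by a sharper cleaning step: it takes $V'\subseteq V$ maximal subject to $B(V)-B(V')\ge |V\setminus V'|\sqrt{\tau}\,t$, which removes only $\sqrt{\tau}\,t$ vertices while still guaranteeing that every survivor lies in fewer than $\sqrt{\tau}\,t$ bad pairs.
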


The following proof is similar to Pikhurko's argument establishing \cite{Pi08}*{Theorem~1}.

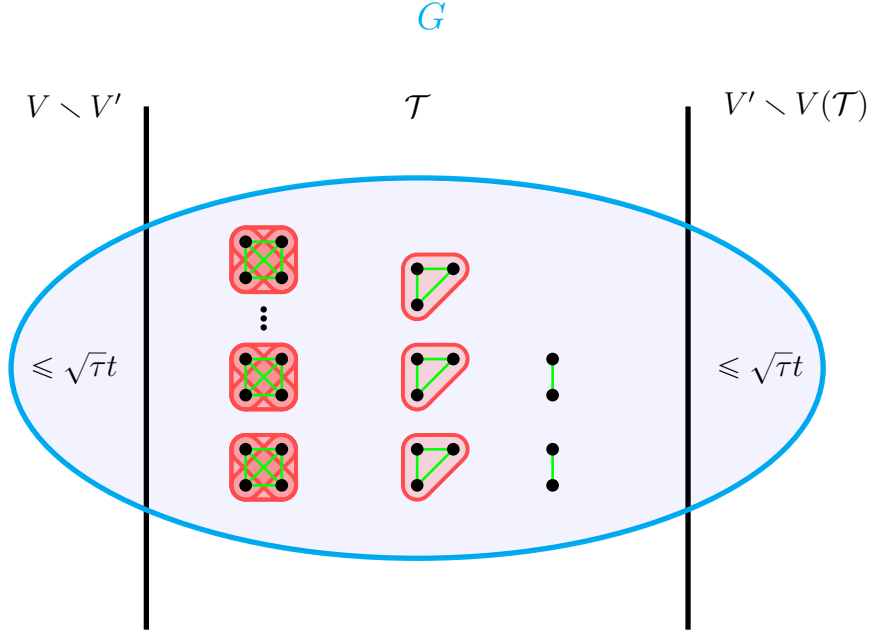
\begin{figure}[ht]
\begin{tikzpicture}[scale=1.2]
\coordinate (do1) at (2.3,2.65);	
\coordinate (do2) at (2.3,2.75);
\coordinate (do3) at (2.3,2.55);
	
	\coordinate (a') at (2.5,1.8);
		\coordinate (c') at (2.1,2.2);
		\coordinate (d') at (2.1,1.8);
	\coordinate (b') at (2.5,2.2);
	
	\coordinate (a) at (2.5,0.8);
		\coordinate (c) at (2.1,1.2);
		\coordinate (d) at (2.1,0.8);
	\coordinate (b) at (2.5,1.2);
	
	\coordinate (a2) at (2.5,3.1);
		\coordinate (c2) at (2.1,3.5);
		\coordinate (d2) at (2.1,3.1);
	\coordinate (b2) at (2.5,3.5);
	
	\coordinate (a3) at (4,2.8);
		\coordinate (c3) at (4.4,3.2);
	\coordinate (b3) at (4,3.2);
	
	\coordinate (a4) at (4,1.8);
		\coordinate (c4) at (4.4,2.2);
	\coordinate (b4) at (4,2.2);
	
	\coordinate (a5) at (4,0.8);
		\coordinate (c5) at (4.4,1.2);
	\coordinate (b5) at (4,1.2);
	
	\coordinate (a6) at (5.5,0.8);
	\coordinate (b6) at (5.5,1.2);
	
	\coordinate (a7) at (5.5,1.8);
	\coordinate (b7) at (5.5,2.2);
	
	\coordinate (b8) at (7,5);
		\coordinate (a8) at (7,-0.8);
		\coordinate (b9) at (1,5);
		\coordinate (a9) at (1,-0.8);
		
	\coordinate (g) at (4.5,6);
	\coordinate (t) at (4,5);
	\coordinate (t3) at (0.2,5);
	\coordinate (t4) at (8.2,5);
	\coordinate (t1) at (7.8,2.1);
	\coordinate (t2) at (0.2,2.1);

	\begin{pgfonlayer}{front}

		\fill (a) circle (2pt);
		\fill (do1) circle (1pt);
		\fill (do2) circle (1pt);
		\fill (do3) circle (1pt);
		\fill (b) circle (2pt);
		\fill (c) circle (2pt);
		\fill (d) circle (2pt);
		\fill (a') circle (2pt);
		\fill (b') circle (2pt);
		\fill (c') circle (2pt);
		\fill (d') circle (2pt);
		\fill (a2) circle (2pt);
		\fill (b2) circle (2pt);
		\fill (c2) circle (2pt);
		\fill (d2) circle (2pt);
		\fill (a3) circle (2pt);
		\fill (b3) circle (2pt);
		\fill (c3) circle (2pt);
		\fill (a4) circle (2pt);
		\fill (b4) circle (2pt);
		\fill (c4) circle (2pt);
		\fill (a5) circle (2pt);
		\fill (b5) circle (2pt);
		\fill (c5) circle (2pt);
		\fill (b6) circle (2pt);
		\fill (a6) circle (2pt);
		\fill (b7) circle (2pt);
		\fill (a7) circle (2pt);

		\node at ($(g)+(180:0.34)$) { \large  \textcolor{cyan}{$G$}};
\node at ($(t)+(0:0)$) { $\mathcal{T}$};
\node at ($(t3)+(0:0)$) { $V\setminus V'$};
\node at ($(t4)+(0:0)$) { $V'\setminus V(\mathcal{T})$};
		\node at ($(t1)+(0:0)$) {$\leq \sqrt{\tau} t$};
		\node at ($(t2)+(0:0)$) {$\leq \sqrt{\tau} t$};
												
	\end{pgfonlayer}
	
	\begin{pgfonlayer}{background}

				\draw[black, line width=2pt] (a9) -- (b9);
		\draw[black, line width=2pt] (a8) -- (b8);
		\draw[cyan, line width=2pt] (4,2.1) ellipse (128pt and 60pt);
\fill[blue, opacity=0.05] (4,2.1) ellipse (128pt and 60pt);
	\end{pgfonlayer} 

\qedge{(a3)}{(b3)}{(c3)}{4.5pt}{1.5pt}{red!70!white}{red!70!white,opacity=0.2};
\qedge{(a4)}{(b4)}{(c4)}{4.5pt}{1.5pt}{red!70!white}{red!70!white,opacity=0.2};
\qedge{(a5)}{(b5)}{(c5)}{4.5pt}{1.5pt}{red!70!white}{red!70!white,opacity=0.2};
\qedge{(b2)}{(a2)}{(c2)}{4.5pt}{1.5pt}{red!70!white}{red!70!white,opacity=0.2};
\qedge{(b2)}{(d2)}{(c2)}{4.5pt}{1.5pt}{red!70!white}{red!70!white,opacity=0.2};
\qedge{(b2)}{(a2)}{(d2)}{4.5pt}{1.5pt}{red!70!white}{red!70!white,opacity=0.2};
\qedge{(c2)}{(a2)}{(d2)}{4.5pt}{1.5pt}{red!70!white}{red!70!white,opacity=0.2};

\qedge{(b)}{(a)}{(c)}{4.5pt}{1.5pt}{red!70!white}{red!70!white,opacity=0.2};
\qedge{(b)}{(d)}{(c)}{4.5pt}{1.5pt}{red!70!white}{red!70!white,opacity=0.2};
\qedge{(b)}{(a)}{(d)}{4.5pt}{1.5pt}{red!70!white}{red!70!white,opacity=0.2};
\qedge{(c)}{(a)}{(d)}{4.5pt}{1.5pt}{red!70!white}{red!70!white,opacity=0.2};

\qedge{(b')}{(a')}{(c')}{4.5pt}{1.5pt}{red!70!white}{red!70!white,opacity=0.2};
\qedge{(b')}{(d')}{(c')}{4.5pt}{1.5pt}{red!70!white}{red!70!white,opacity=0.2};
\qedge{(b')}{(a')}{(d')}{4.5pt}{1.5pt}{red!70!white}{red!70!white,opacity=0.2};
\qedge{(c')}{(a')}{(d')}{4.5pt}{1.5pt}{red!70!white}{red!70!white,opacity=0.2};

\draw[green, line width=1pt] (a) -- (c);
		\draw[green, line width=1pt] (a) -- (b);
		\draw[green, line width=1pt] (a) -- (d);
		\draw[green, line width=1pt] (c) -- (b);
		\draw[green, line width=1pt] (d) -- (b);
		\draw[green, line width=1pt] (c) -- (d);
		
		\draw[green, line width=1pt] (a') -- (c');
		\draw[green, line width=1pt] (a') -- (b');
		\draw[green, line width=1pt] (a') -- (d');
		\draw[green, line width=1pt] (c') -- (b');
		\draw[green, line width=1pt] (d') -- (b');
		\draw[green, line width=1pt] (c') -- (d');
		
		\draw[green, line width=1pt] (a2) -- (c2);
		\draw[green, line width=1pt] (a2) -- (b2);
		\draw[green, line width=1pt] (a2) -- (d2);
		\draw[green, line width=1pt] (c2) -- (b2);
		\draw[green, line width=1pt] (d2) -- (b2);
		\draw[green, line width=1pt] (c2) -- (d2);
		
		\draw[green, line width=1pt] (a3) -- (b3);
		\draw[green, line width=1pt] (c3) -- (b3);
		\draw[green, line width=1pt] (a3) -- (c3);
	
		\draw[green, line width=1pt] (a4) -- (b4);
		\draw[green, line width=1pt] (c4) -- (b4);
		\draw[green, line width=1pt] (a4) -- (c4);
		
		\draw[green, line width=1pt] (a5) -- (b5);
		\draw[green, line width=1pt] (c5) -- (b5);
		\draw[green, line width=1pt] (a5) -- (c5);
		
		\draw[green, line width=1pt] (a6) -- (b6);
		\draw[green, line width=1pt] (a7) -- (b7);
		
	\end{tikzpicture}
	\caption {Example of a tiling $\cT$ with maximal weight, 
	where \textcolor{green}{good pairs} are indicated by green edges.}
	\label{fig:tiling}
	\end{figure}
	
\begin{proof}	
	Let us call a pair of vertices {\it bad} if its pair-degree is smaller than $(3/4+\alpha)t$. 
	Moreover we will call a subhypergraph of $G$ {\it good} if it does not contain any bad pair 
	of vertices.
	
	For $X\subseteq V$ we denote number of bad pairs in $X^{(2)}$ by $B(X)$. Let 
	$V'\subseteq V$ be maximal with the property $B(V)-B(V')\ge |V\sm V'|\sqrt{\tau} n$. 
	Such a set exists because $V$ has this property. Note that $B(V)\le \tau n^2$ entails 
	$|V\sm V'|\le \sqrt{\tau} n$. Let $G'=(V',E')$ be the subhypergraph of $G$ induced by $V'$.
	Observe that for each $v\in V'$ the maximality of $V'$ entails
	\begin{align*}
		B(V')-B(V'\sm \{x\}) &=\bigl(B(V)-B(V'\sm \{x\})\bigr)-\bigl(B(V)-B(V')\bigr) \\
		& < (|V\sm V'|+1)\sqrt{\tau} n - |V\sm V'|\sqrt{\tau} n = \sqrt{\tau} n\,,
	\end{align*}
	meaning that in $G'$ every vertex is in less than $\sqrt{\tau} n$ bad pairs. 
		
	Let $\mathcal{F}$ be a set of hypergraphs. By an {\it $\mathcal{F}$-tiling} in $G$ we mean 
	a collection of vertex-disjoint good subgraphs, each of which is isomorphic to some member of 
	$\mathcal{F}$. Moreover let $w_2=2$, $w_3=6$, and $w_4=11$ be weight factors.

 	In the following we will consider a $\bigl\{K_2^{(3)},K_3^{(3)},K_4^{(3)}\bigr\}$-tiling 
	$\cT$ in $G'$ that maximises the weight function $w(\cT)=w_2\ell_2+w_3\ell_3+w_4\ell_4$, 
 	where $\ell_i$ denotes the number of copies of $K_i^{(3)}$ in $\cT$. 
 
	At most $\sqrt{\tau} t+1$ vertices of $V'$ are missed by the tiling $\cT$. 
	Indeed, otherwise we find a good subgraph isomorphic to $K_2^{(3)}$ not in the tiling, 
	since every vertex in $V'$ is in at most $\sqrt{\tau}t$ bad pairs. Because $w_2>0$ this 
	is a contradiction to the maximality of $\cT$.
 
	We say a hypergraph $F\in \cT$ makes a {\it connection} with the vertex 
	$x\in V'\setminus V(F)$ (denoted by $(F,x)\in \mathcal{C}$) if $|V(F)|\leq 3$ and 
	$V(F)\cup \{ x \}$ spans a complete good hypergraph. Examining the properties of  
	connections, we get the following results.
	
	\begin{enumerate}
 		\item[(A)] A $K_i^{(3)}$-subgraph $F\in \cT$ with $i\leq 3$ can only make a connection 
		to a vertex $x$ that belongs to a $K_j^{(3)}$-subgraph of $\cT$ with $j>i$.
	\end{enumerate}
 	
	Otherwise moving $x$ to $F$ would increase the weight of $\cT$, since $w_4+w_2-2w_3=1$, 
	$w_4-w_2-w_3=3$, $w_3-2w_2=2$, and all other possible weight changes are positive as well.
 	
	\begin{enumerate}
  		\item[(B)] Each $K_2^{(3)}$-subgraph $F$ in $\cT$ makes at least 
		$(\frac{3}{4}+\frac{\alpha}{2}) t$ connections.
 	\end{enumerate}
 	
	Let $\{a,b\}$ be the vertex set of $K_2^{(3)}$-subgraph $F$ of $\cT$. The subgraph $F$ 
	makes a connection with a vertex $x\in V'\setminus V(F)$ if $abx\in E(G)$ and $ab,ax,bx$ 
	are good pairs. Recalling that $ab$ is a good pair due to the definition of tiling, 
	we can relax the second condition to $ax,bx$ being good pairs.
 	There are at least $(\frac{3}{4}+\alpha -\sqrt{\tau}) t$ vertices in $V'\setminus V(F)$ 
	that form an edge  with $ab$ in $G$. Since every vertex in $V'$ is in at most 
	$\sqrt{\tau }t$ bad pairs, at most $2\sqrt{\tau }t$ vertices, which form an edge with 
	$ab$ in $G$, can fail the second condition. Thus, every $K_2^{(3)}$-subgraph~$F$ 
	of~$\cT$ makes at least $(\frac{3}{4}+\alpha -3\sqrt{\tau })t$ connections, which due 
	to $\tau<\frac{\alpha^2}{36}$ is more than $(\frac{3}{4}+\frac{\alpha}{2}) t$.
 	
	\begin{enumerate}
 		\item[(C)] Every $K_3^{(3)}$-subgraph~$F$ in $\cT$ makes at least $(\frac{1}{4}+\alpha)t$
		 connections.
 	\end{enumerate} 
 	
	For each $K_3^{(3)}$-subgraph~$F$ of $\cT$ there are by (B) at least $(\frac{9}{4}+\alpha)t$ 
	edges that intersect it in exactly two vertices and consists of no bad pairs.
	Let $c$ denote the number of connections made by a $K_3^{(3)}$-subgraph of $\cT$. 
	Thus, we get
		\[
		\Big(\frac{9}{4}+\alpha \Big)t\leq 3c+2(t-3-c)\, ,
	\]
		i.e.,
	\[
		\Big(\frac{9}{4}+\alpha \Big)t-2t+6\leq c\,.
	\]
		
	\begin{enumerate}
		\item[(D)] $\ell_3\leq 3$.
	\end{enumerate}  
	
	Otherwise let $F_1, F_2, F_3, F_4$ be $K_3^{(3)}$-subgraphs in $\cT$. Due to (A) all connections 
	made by a $F_i$ belong to a $K_4^{(3)}$-subgraph of $\cT$. 
	An upper bound for the number of $K_4^{(3)}$ in $\cT$ is~$\lfloor t/4 \rfloor$. 
	Since 
		\[
		4\Big (\frac{1}{4}+\alpha \Big) t>4\lfloor t/4 \rfloor \,,
	\]
		the vertices of some $K_4^{(3)}$-subgraph $F$ of $\cT$ make at least $5$ connections 
	with $F_1,F_2,F_3,F_4$. Therefore we find two distinct vertices $x,y\in V(F)$ and 
	$i,j\in [4]$ with $i\neq j$, such that $(F_i,x), (F_j,y)\in \mathcal{C}$. 
	Moving $x$ to $F_i$ and $y$ to $F_j$ and thereby reducing $F$ to a $K_2^{(3)}$ would 
	increase the weight of $\cT$, since $2(w_4-w_3)+(w_2-w_4)=1$. Thus, we get a contradiction 
	to the maximality of $\cT$.
	
	\smallskip
	
	{\it Case 1.} $\ell_2\geq 3$
 	
	\smallskip
	
	Let $F_1, F_2, F_3$ be $K_2^{(3)}$-subgraphs in $\cT$.
    
    \begin{enumerate}
 		\item[(E)] There is no $K_3^{(3)}$-subgraph $F\in \cT$ with the property that 
		$F_1,F_2,F_3$ make more than~$3$ connections to $F$.
 	\end{enumerate}
 	
	Otherwise we could find distinct vertices $x,y\in V(F)$ and $i,j\in [3]$ with $i\neq j$, 
	such that $(F_i,x),(F_j,y)\in \mathcal{C}$. Moving $x$ to $F_i$ and $y$ to $F_j$ and 
	thereby eliminating $F$ would increase the weight of $\cT$, since $2(w_3-w_2)-w_3=2$. 
	Thus, we get a contradiction to the maximality of $\cT$.
	
	\begin{enumerate}
		\item[(F)] There is no $K_4^{(3)}$-subgraph $F\in \cT$ with the property that 
		$F_1,F_2,F_3$ make more than~$8$ connections to $F$.
  	\end{enumerate}
 	
	Otherwise we could find distinct vertices $x_1,x_2,x_3\in V(F)$, such that 
	$(F_i,x_i)\in \mathcal{C}$ for every~$i\in [3]$. This is because every bipartite 
	graph with nine edges and partition classes of size~$3$ and~$4$ contains a matching 
	of size $3$. Moving each $x_i$ to $F_i$ and thereby eliminating~$F$ would increase 
	the weight of $\cT$, since $3(w_3-w_2)-w_4=1$. Thus, we get a contradiction to the 
	maximality of $\cT$.

	Finally (A), (E), and (F) imply an upper bound of $3\ell_3+8\ell_4$ on the number of 
	connections created by $F_1,F_2,F_3$. Because of (C) this leads to 
		\[
		3\Big (\frac{3}{4}+\frac{\alpha}{2}\Big )t\leq 3\ell_3+8\ell_4\,.
	\]
		Since $\ell_3\leq 3$ and $\ell_4\leq \lfloor t/4\rfloor$, we have
		\[
		 \Big(\frac{9}{4}+\frac{3}{2}\alpha \Big)t\leq 9+ 8 \lfloor t/4\rfloor\,, 
	\]
		which contradicts $t\geq 36$.

	\smallskip
	
	{\it Case 2.} $\ell_2\leq 2$
	
	\smallskip
	
	We have deleted $\sqrt{\tau}t$ vertices from $G$ to obtain the graph $G'$, 
	another $\sqrt{\tau}t+1$ vertices can be missed by the tiling $\cT$, and at 
	most  $2\ell_2+3\ell_3\leq 13$ vertices of $V(\cT)$ are not covered by $K_4^{(3)}$ 
	subgraphs. Therefore it is possible to delete at most $2\sqrt{\tau}t+14$ vertices 
	and find a $K_4^{(3)}$-factor afterwards.
\end{proof}

\subsection{Hypergraph regularity method}\label{sec:62}

We denote by $K(X,Y)$ the complete bipartite graph with vertex partition $X\dcup Y$. 
For a bipartite graph $P=(X \dcup Y ,E)$ we say it is $(\delta_2,d_2)$-{\it quasirandom} if
\[ 
	\big |e(X',Y')-d_2|X'||Y'|\big |\leq \delta_2|X||Y|
\]
holds for all subsets $X'\subseteq X$ and $Y'\subseteq Y$, where $e(X',Y')$ denotes the 
number of edges in~$P$ with one vertex in $X'$ and one in $Y'$. Given a $k$-partite graph
$P=(X_1\dcup \ldots  \dcup X_k,E)$ with $k\geq 2$ we say $P$ 
is {\it $(\delta_2,d_2)$-quasirandom}, 
if all naturally induced bipartite subgraphs $P[X_i,X_j]$ are $(\delta_2, d_2)$-quasirandom.
Moreover, for a tripartite graph $P=(X\dcup Y\dcup Z,E)$ we denote by
\[
	\mathcal{K}_3(P)=\big \{ \{x,y,z\}\subseteq X\cup Y\cup Z\colon xy,xz,yz\in E\big \}
\]
the triples of vertices in $P$ spanning a triangle. For a $(\delta_2,d_2)$-quasirandom 
tripartite graph $P=(X\dcup Y \dcup Z,E)$  the so-called {\it triangle counting lemma} 
(see e.g. the survey article~\cite{RS12}*{Theorem~18} or the research 
monograph~\cite{Lov12}*{Lemma 10.24}) 
implies that
\begin{equation}
	d_2^3|X||Y||Z|-3\delta_2|X||Y||Z| 
	\leq 
	|\mathcal{K}_3(P)|\leq d_2^3|X||Y||Z|+3\delta_2|X||Y||Z|\, .
\end{equation}

\begin{dfn}
	Given a $3$-uniform hypergraph $H=(V,E_H)$ and a tripartite graph $P=(X\dcup Y\dcup Z,E)$ 
	with $X\cup Y\cup Z\subseteq V$ we say $H$ is $(\delta_3,d_3)$-{\it quasirandom with 
	respect to} $P$ if for every tripartite subgraph $Q\subseteq P$ we have
		\[
		\big ||E_H\cap \mathcal{K}_3(Q)|-d_3|\mathcal{K}_3(Q)|\big |
		\leq 
		\delta_3 |\mathcal{K}_3(P)| \,.
	\]
		
	Furthermore, we say $H$ is $\delta_3$-{\it quasirandom with respect to} $P$, if it is 
	$(\delta_3,d_3)$-quasirandom for some $d_3\geq 0$.
\end{dfn}

We define the {\it relative density} of $H$ with respect to $P$ by
\[
	d(H|P)=\dfrac{|E_H\cap \cK_3(P)|}{|\cK_3(P)|} \,,
\]
where $d(H|P)=0$ if $\cK_3(P)=\emptyset$.

A refined version of the regularity lemma (see~\cite{RS07}*{Theorem~2.3}) states the following.

\begin{lemma}[Regularity Lemma]\label{thm:regularity}
	For every $\delta_3>0$, every $\delta_2\colon \NN \rightarrow (0,1]$, 
	and every $t_0\in \NN$ there exists an integer $T_0$ such that for 
	every $n\geq t_0$ and every $n$-vertex 3-uniform hypergraph $H=(V,E_H)$ the 
	following holds.
	
	There are integers $t$ and $\ell$ with $t_0\leq t\leq T_0$ and $\ell\leq T_0$ and 
	there exists a vertex partition $V_0\dcup V_1 \dcup \ldots \dcup V_t=V$ and for all 
	$1\leq i<j\leq t$ there exists a partition
		\[
		\mathcal{P}^{ij}=\{P_\alpha ^{ij}=(V_i \dcup V_j,E_\alpha^{ij})\colon1\leq \alpha \leq \ell \}
	\]
		of the edge set of the complete bipartite graph $K(V_i,V_j)$ satisfying the 
	following properties 
	\begin{enumerate}
		\item \label{it:reg1} $|V_0|\leq \delta_3 n$ and $|V_1|=\ldots =|V_t|$,
		\item \label{it:reg2}for every $1\leq i<j\leq t$ and $\alpha \in [\ell]$ 
			the bipartite graph $P_\alpha^{ij}$ is $(\delta_2(\ell),1/\ell)$-quasirandom, and
		\item \label{it:reg3} $H$ is $\delta_3$-quasirandom w.r.t $P_{\alpha \beta\gamma}^{ijk}$ 
			for all but at most $\delta_3t^3\ell^3$ tripartite graphs
						\[
				P_{\alpha \beta \gamma}^{ijk}
				=
				P_{\alpha}^{ij}\dcup P_{\beta}^{ik}\dcup P_{\gamma}^{jk}
				=
				(V_i \dcup V_j\dcup V_k, E_\alpha ^{ij} \dcup E_\beta ^{ik}\dcup E_\gamma^{jk})
				\,,
			\]
					with $1\leq i<j<k\leq t$ and $\alpha, \beta, \gamma \in [\ell]$.
	\end{enumerate}
\end{lemma}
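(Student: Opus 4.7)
The plan is to establish the Regularity Lemma by the standard energy-increment strategy, adapted to the two-level setting of $3$-uniform hypergraphs sitting on top of bipartite partitions. I would fix the inputs $\delta_3$, $\delta_2\colon\NN\to(0,1]$, $t_0$, and construct the partition iteratively: at each stage either the output already meets the conclusion, or I exhibit a refinement on which a suitable mean-square index strictly increases by a fixed amount.

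First I would define the index of a configuration $\bigl(\{V_i\}_{i=0}^t,\{\mathcal{P}^{ij}\},H\bigr)$ as a weighted sum of squared relative densities $d(H|P^{ijk}_{\alpha\beta\gamma})^2$ over all tripartite graphs built from the current bipartite partitions, normalised so that the index always lies in $[0,1]$. The initial configuration would be any equipartition of $V$ into $t_0$ classes with $V_0=\emptyset$, together with the trivial partition $\mathcal{P}^{ij}=\{K(V_i,V_j)\}$ for every pair $i<j$.

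The heart of the proof is the increment step. If the current configuration violates condition~\ref{it:reg3}, i.e.\ more than $\delta_3 t^3\ell^3$ tripartite graphs $P^{ijk}_{\alpha\beta\gamma}$ witness $H$-irregularity, I would extract from each such witness a tripartite subgraph $Q$ whose triangle set carries a density deviation of order $\delta_3$ from the prediction $d(H|P^{ijk}_{\alpha\beta\gamma})|\mathcal{K}_3(Q)|$. A defect-Cauchy-Schwarz argument applied simultaneously to all witnesses then yields a common refinement of the $\mathcal{P}^{ij}$ --- into a bounded number $\ell'\le f(\ell)$ of pieces, each still $(\delta_2(\ell'),1/\ell')$-quasirandom by the slicing lemma --- along which the mean-square index jumps up by at least $\delta_3^c$ for some absolute $c>0$. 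An additional sub-step then refines the vertex partition to restore $|V_1|=\dots=|V_t|$ by sweeping the leftover vertices into $V_0$; choosing $t_0$ sufficiently large keeps $|V_0|\le\delta_3 n$ throughout. Since the index is bounded by $1$, after at most $N=N(\delta_3,t_0)$ rounds the procedure halts with a configuration satisfying~\ref{it:reg1}--\ref{it:reg3}, and the final values of $t$ and $\ell$ are bounded by some $T_0=T_0(\delta_3,\delta_2,t_0)$.

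The main obstacle will be the coordination between the vertex level and the graph level. The bipartite quasirandomness in~\ref{it:reg2} is demanded with parameter $\delta_2(\ell)$ that shrinks as the graph partitions are refined, so every refinement step must be accompanied by a slicing lemma producing bipartite pieces of density exactly $1/\ell'$ which are $(\delta_2(\ell'),1/\ell')$-quasirandom; I would prove this slicing via a random partition of edges followed by a concentration and discarding argument, paying a negligible toll into a controlled ``garbage'' piece. Making the two-level book-keeping consistent with the triangle counting lemma --- so that irregularity of $H$ with respect to $P^{ijk}_{\alpha\beta\gamma}$ really can be converted into an energy increment on the underlying bipartite partitions, rather than merely on the hypergraph --- is the technical core and the reason why $\delta_2$ must be a function of $\ell$ rather than a fixed constant.
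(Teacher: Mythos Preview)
The paper does not prove this lemma at all: it is quoted verbatim as a known result, with the citation ``see~\cite{RS07}*{Theorem~2.3}'' immediately preceding the statement. So there is no proof in the paper to compare against; the Regularity Lemma is used as a black box.

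Your sketch is a reasonable high-level outline of the energy-increment strategy that underlies the R\"odl--Schacht regularity lemma, and you have correctly identified the central difficulty, namely the coordination between the vertex/pair partition level and the hypergraph level, which forces $\delta_2$ to be a function of $\ell$. That said, a full proof along these lines is substantially more involved than your paragraph suggests --- in particular, the ``slicing'' step producing exactly equitable $(\delta_2(\ell'),1/\ell')$-quasirandom bipartite pieces, and the precise way the defect form of Cauchy--Schwarz is applied across \emph{all} irregular triads simultaneously to yield a uniform energy increment, each require several pages of careful argument in \cite{RS07}. For the purposes of this paper, however, none of that is needed: you may simply cite the result.
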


The tripartite graphs  $P_{\alpha \beta \gamma}^{ijk}$ appearing in \eqref{it:reg3} are 
usually called {\it triads}. Next we state the following consequence of the embedding 
lemma~\cite{NPRS09}*{Corollary~2.3} (see also~\cite{CR17}*{Theorem~5.3}).

\begin{lemma}\label{lm:alang}
	Given $Q\in \NN$ and $d_3>0$, there exist $\delta_3>0$, and functions 
	$\delta_2\colon \NN \rightarrow (0,1]$ and $N\colon\NN\rightarrow \NN$, 
	such that that the following holds for every $\ell\in \NN$. 
	
	Let $P=(V_1\dcup V_2 \dcup V_3 \dcup V_4, E_P) $ be a  $4$-partite graph with 
	$|V_1|=\ldots =|V_4|=n\geq N(\ell)$ such that $P ^{ij}=(V_i \dcup V_j,E^{ij})$ is 
	$(\delta_2(\ell), 1/\ell)$-quasirandom for every pair $ij\in [4]^{(2)}$. 
	Suppose~$H$ is a $4$-partite, $3$-uniform hypergraph with vertex classes $V_1,\ldots ,V_4$, 
	which satisfies for every $ijk\in [4]^{(3)}$ that $H$  is 
	$(\delta_3, d_{ijk})$-quasirandom w.r.t. 
	the tripartite graphs $P^{ijk}=P^{ij}\dcup P^{ik}\dcup P^{jk}$ for some 
	$d_{ijk}\geq d_3$.
	Then there exists a squared path with $Q$ vertices in $H$.
\end{lemma}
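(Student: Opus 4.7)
The plan is to derive Lemma~\ref{lm:alang} as a specialisation of the hypergraph embedding lemma \cite{NPRS09}*{Corollary~2.3} applied to a fixed ``blueprint'' $4$-partite $3$-uniform hypergraph $F$ that encodes the squared path on $Q$ vertices equipped with a canonical cyclic $4$-colouring.

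First I would build the blueprint. Take $F$ on the vertex set $\{w_1,\ldots,w_Q\}$ with parts $U_r=\{w_i\colon i\equiv r\pmod{4}\}$ for $r\in [4]$, and let the edges of $F$ be all triples contained in some window $\{w_i,w_{i+1},w_{i+2},w_{i+3}\}$ with $1\le i\le Q-3$. Because the class labels cycle modulo $4$, every such window meets each $U_r$ in exactly one vertex, so each edge of $F$ has its three vertices in three distinct parts (making $F$ honestly $4$-partite), and as $i$ varies the edges use every triple of class indices $ijk\in[4]^{(3)}$, so all four densities $d_{ijk}$ in the hypothesis come into play. By construction, any partition-respecting embedding of $F$ into $H$ (mapping $U_r$ into $V_r$ for every $r$) is precisely a squared path with $Q$ vertices in $H$.

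Second I would feed $F$ into the embedding lemma. Since $|V(F)|=Q$ is fixed, \cite{NPRS09}*{Corollary~2.3} yields a threshold $\delta_3>0$ together with functions $\delta_2\colon\NN\to(0,1]$ and $N\colon\NN\to\NN$ such that the following holds for every $\ell\in\NN$. Whenever the bipartite pieces $P^{ij}$ are $(\delta_2(\ell),1/\ell)$-quasirandom, the cluster sizes satisfy $|V_i|\ge N(\ell)$, and $H$ is $(\delta_3,d_{ijk})$-quasirandom with respect to each triad $P^{ijk}$ with $d_{ijk}\ge d_3$, then $H$ contains a partition-respecting copy of $F$. These assumptions coincide with the hypotheses of our lemma, and any such copy is the desired squared path.

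The only real work is matching notation and parameters with \cite{NPRS09}*{Corollary~2.3}: one has to verify that $F$ is admissible as a partite blueprint in the framework there (immediate, since $F$ is a bounded-size $4$-partite $3$-uniform hypergraph) and that the quasirandomness thresholds can be arranged in the functional form $\delta_3$, $\delta_2(\ell)$, $N(\ell)$ as stated here. Shrinking $\delta_3$ if necessary, and taking $N(\ell)$ large enough relative to $\ell$ and $\delta_2(\ell)^{-1}$, the dense-counting estimate produced by the corollary guarantees strictly more than zero embeddings. The alternative reference \cite{CR17}*{Theorem~5.3} packages essentially the same output in a form already close to what is needed here, so one may match to whichever source is more convenient.
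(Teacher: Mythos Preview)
Your proposal is correct and aligns with the paper's treatment: the paper does not give a proof of this lemma at all but simply states it as a consequence of the embedding lemma \cite{NPRS09}*{Corollary~2.3} (with \cite{CR17}*{Theorem~5.3} as an alternative reference). Your write-up fleshes out precisely the verification one would carry out---building the $4$-partite blueprint $F$ for the squared path via the cyclic $4$-colouring and checking that a partition-respecting embedding of $F$ is a squared path in $H$---so it is exactly the intended argument made explicit.
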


An iterative application of this lemma leads to the following statement. 

\begin{lemma}\label{lm:67}
	Given $Q\in \NN$ with $Q \equiv 0 \pmod 4$, $d_3>0$, and $\nu>0$. 
	There exist $\delta_3>0$, $ \delta_2\colon\NN\rightarrow (0,1)$, 
	and $N\colon\NN \rightarrow \NN$, such that the following 
	holds for every $\ell\in \NN$. Let $P=(V_1\dcup V_2 \dcup V_3 \dcup V_4, E_P)$ 
	be a  $4$-partite graph with $|V_1|=\ldots =|V_4|=n\geq N(\ell)$ and 
	let $P^{ij}=(V_i \dcup V_j,E^{ij})$ be $(\delta_2(\ell), 1/\ell)$-quasirandom
	for every $ij\in[4]^{(2)}$. 
	Suppose that $H$ is a $3$-uniform hypergraph, which satisfies for every $ijk\in [4]^{(3)}$ 
	that $H$ is $(\delta_3, d_{ijk})$-quasirandom with respect to the tripartite graph 
	$P^{ijk}=P^{ij}\dcup P^{ik}\dcup P^{jk}$ for some $d_{ijk}\geq d_3$.
   	Then all but at most $\nu n$ vertices of $V_1\dcup \ldots \dcup V_4$ 
	can be covered by vertex-disjoint squared paths with $Q$ vertices each. 
\end{lemma}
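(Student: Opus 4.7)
The plan is to apply Lemma~\ref{lm:alang} repeatedly, extracting one squared path on $Q$ vertices at each step and removing its vertices from the 4-partite structure. We continue until every class has fewer than $\nu n/4$ uncovered vertices; the resulting family of vertex-disjoint squared paths then covers all but at most $4\cdot\nu n/4=\nu n$ vertices of $V_1\cup\ldots\cup V_4$.

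To set up the iteration, apply Lemma~\ref{lm:alang} with the pair $(Q,d_3/2)$ in place of $(Q,d_3)$ there to obtain auxiliary constants $\delta_3^*$, a function $\delta_2^*\colon\NN\to(0,1]$, and a function $N^*\colon\NN\to\NN$. We then pick
\[
	\delta_3 \leq \min\Bigl\{(\nu/4)^3\delta_3^*,\ d_3(\nu/4)^3/2\Bigr\}\,,
	\qquad
	\delta_2(\ell)\leq (\nu/4)^2\delta_2^*(\ell)\,,
	\qand
	N(\ell)\geq (4/\nu)\,N^*(\ell)\,.
\]
A key structural point is that any squared path on $Q$ vertices in a 4-partite $3$-uniform hypergraph (with $4\mid Q$) uses exactly $Q/4$ vertices from each class: every four consecutive vertices along the path span a $K_4^{(3)}$, and since each edge of such a $K_4^{(3)}$ must have its three vertices in three distinct classes, the four vertices must lie in four distinct classes, so the class index cycles modulo $4$ along the path.

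Consequently, after any number of iterations, the uncovered sets $U_i\subseteq V_i$ satisfy $|U_1|=\ldots=|U_4|=:n'$. While $n'\geq \nu n/4$, the induced bipartite graph $P^{ij}[U_i,U_j]$ inherits $(\delta_2^*(\ell),1/\ell)$-quasirandomness from $P^{ij}$: the original absolute error $\delta_2(\ell)n^2$ is bounded by $\delta_2^*(\ell)(n')^2$ by our choice of $\delta_2$. Similarly, using the triangle counting lemma, each restricted triad $P^{ijk}[U_i,U_j,U_k]$ contains approximately $(n'/n)^3$ times the original number of triangles, and one checks directly that $H$ is $(\delta_3^*, d_{ijk}')$-quasirandom with respect to it for some $d_{ijk}'\geq d_{ijk}-d_3/2\geq d_3/2$. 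Moreover $n'\geq N^*(\ell)$, so Lemma~\ref{lm:alang} applies to $H$ restricted to $U_1\cup\ldots\cup U_4$ and produces another squared path on $Q$ vertices disjoint from those extracted before.

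The main obstacle is the quantitative bookkeeping required to ensure that inherited quasirandomness remains strong enough for repeated applications of Lemma~\ref{lm:alang}. This is handled by the explicit polynomial slack in $\nu$ built into the choices of $\delta_2$, $\delta_3$, and $N$ above. The balanced structure of each extracted path is an automatic consequence of the $K_4^{(3)}$ rigidity, which keeps the class sizes equal throughout the process without any further intervention.
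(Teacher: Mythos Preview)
Your proof is correct and follows essentially the same approach as the paper: apply Lemma~\ref{lm:alang} with parameters $(Q,d_3/2)$, choose $\delta_3,\delta_2,N$ with polynomial slack in $\nu$ so that quasirandomness survives when restricting to subsets of size at least $\nu n/4$, and use the $K_4^{(3)}$-structure of four consecutive vertices to guarantee each extracted path is balanced across the four classes. The paper phrases the extraction as taking a \emph{maximal} collection of vertex-disjoint $Q$-vertex squared paths and deriving a contradiction if more than $\nu n$ vertices remain, while you phrase it as an explicit iteration; these are equivalent, and your explicit cyclic argument for balance is a welcome clarification of a step the paper leaves implicit.
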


\begin{proof}
	Let $\delta_3^*>0$, $\delta_2^*\colon\NN\rightarrow (0,1]$, 
	$N^*\colon\NN\rightarrow \NN$ be the number and functions 
	obtained by applying Lemma~\ref{lm:alang} to $Q$ and $d_3/2$. 
	Define 
		\[
		\delta_3=\frac{\delta_3^* \nu ^3 }{128},\,\,\,
				\delta_2(\ell)
		=
		\min \Big (\frac{\delta_2^*(\ell )\nu^2}{16}, \frac{\nu^2}{144 \ell^3}\Big ),\,\,\,
		N(\ell)= \Big \lceil \frac{4 N^*(\ell)}{ \nu} \Big \rceil 
	\]
		for each $\ell \in \NN$. Let $P=(V_1\dcup V_2 \dcup V_3 \dcup V_4, E_P) $ 
	and $H$ be as described above for some~$\ell \in \NN$. Consider a maximal 
	collection $S_1, \ldots , S_m$ of vertex-disjoint squared paths on $Q$ vertices in~$H$. 
	For $i\in [4]$ let $V'_i\subseteq V_i$ denote the set of vertices not belonging to any 
	of these paths. Due to $4\mid Q$ the sets $V'_1, \ldots ,V'_4$ have the same size, 
	say $n^*$. If $n^*< \nu n/4$ we are done, so assume from now on that $n^*\geq \nu n/4$. 
	Then our choice of $\delta_2(\ell )$ implies that the bipartite graphs 
	$P ^{ij}[V'_i \dcup V'_j]$ are $(\delta_2^{**}(\ell ), 1/\ell )$-quasirandom, 
	where $\delta_2^{**}(\ell)=\min ( \delta_2^{*}(\ell), \frac{1}{9\ell^3} )$. 
	So by Lemma~\ref{lm:alang} we get a contradiction to the maximality of $m$ provided 
	we can show that~$H$ is $(\delta_3^*, d_{ijk})$-quasirandom w.r.t. the subtriads $P_*^{ijk}$ 
	of $P^{ijk}$ induced by $V'_i\cup V'_j\cup V'_k$. 
	This is indeed the case, since the triangle counting lemma yields that
		\begin{align*}
		|\mathcal{K}_3(P^{123})|&\leq |V_1||V_2||V_3| \bigl(1/\ell^3+3 \delta_2(\ell)\bigr) \\
		&\overset{n^*\ge \nu n/4}{\leq} 
			\dfrac{4^3|V'_1||V'_2||V'_3|}{ \nu^3} (1/\ell^3+3 \delta_2(\ell))\\
		& \leq \dfrac{64\cdot \mathcal{K}_3(P[V'_1, V'_2, V'_3])}{\nu^3} \cdot 
			\dfrac{(1/\ell^3+3 \delta_2(\ell))}{(1/\ell^3-3 \delta^{**}_2(\ell))}\\
		&\leq 128 \cdot \dfrac{\mathcal{K}_3(P^{123}_*)}{\nu^3} \,,
	\end{align*}
			i.e., 
		\[
			\delta_3|\mathcal{K}_3(P^{123})|\le \delta_3^*|\mathcal{K}_3(P^{123}_*)|\,,
		\]
	and the same argument applies to every other triple ${ijk\in [4]^{(3)}}$.
\end{proof}

\subsection{Vertex-disjoint squared paths with \texorpdfstring{$Q$}{\it Q} vertices}
\label{sec:64}

Next we restate and prove Propostion~\ref{p:alg}. 

\begin{prop}\label{thm:67}
	Given $\alpha, \mu >0 $ and $Q\in \NN$ there exists $n_0\in \NN$ such that 
	in every hypergraph~$H$ with $v(H)=n\geq n_0$ and $\delta_2(H)\geq (3/4+\alpha)n$ all but 
	at most $\mu n$ vertices of~$H$ can be covered by vertex-disjoint squared paths with $Q$ 	
	vertices.
\end{prop}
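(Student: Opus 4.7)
I will combine the hypergraph regularity lemma with the approximate $K_4^{(3)}$-tiling result (Lemma~\ref{lm:61}) and Lemma~\ref{lm:67} in the standard three-step fashion: regularise $H$, tile the reduced hypergraph with tetrahedra, and then embed $Q$-vertex squared paths inside each tile.

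\textbf{Parameter setup and regularisation.} I may assume $4\mid Q$ (otherwise pad $Q$ up). Given $\alpha,\mu>0$ and $Q$, I first apply Lemma~\ref{lm:67} to the parameters $(Q,d_3,\nu)$ with $d_3>0$ suitably small and $\nu=\mu/4$ to extract $\delta_3^*,\delta_2^*(\cdot),N^*(\cdot)$. I then fix a tiny $\tau>0$ satisfying $\alpha\gg\tau$ and $2\sqrt{\tau}\le\mu/8$, and apply Lemma~\ref{thm:regularity} with $\delta_3\le\min\{\delta_3^*,\tau\}$ sufficiently small (with respect to the upper bound $T_0$ that the lemma will return), $\delta_2(\ell)\le\delta_2^*(\ell)$, and $t_0$ large. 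This yields clusters $V_0\dcup V_1\dcup\cdots\dcup V_t=V$ with $|V_0|\le\delta_3 n$ and $|V_i|=m\approx n/t$ for $i\ge 1$, together with bipartite partitions $\cP^{ij}$ of each complete bipartite graph $K(V_i,V_j)$.

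\textbf{Reduced hypergraph and tiling.} I define a $3$-uniform hypergraph $R$ on $[t]$ by declaring $\{i,j,k\}\in E(R)$ iff some triad $P^{ijk}_{\alpha\beta\gamma}$ makes $H$ $(\delta_3,d_{ijk})$-quasirandom with $d_{ijk}\ge d_3$. Summing $d_H(x,y)\ge(3/4+\alpha)n$ over $(x,y)\in V_i\times V_j$ and distributing the triples $(x,y,z)$ over clusters $V_k$ --- after discarding the bounded contributions of $V_0$, $V_i$, $V_j$ --- shows that every pair $\{i,j\}$ satisfies $e_H(V_i,V_j,V_k)\ge d_3 m^3$ for at least $(3/4+2\alpha/3)t$ values of $k$. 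Since at most a $\delta_3$-fraction of all triads are non-quasirandom globally, a further averaging step then yields $d_R(i,j)\ge(3/4+\alpha/2)t$ for all but at most $\tau t^2$ pairs. Lemma~\ref{lm:61} now permits deleting at most $2\sqrt{\tau}t+14$ cluster indices and leaves a $K_4^{(3)}$-factor of the rest.

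\textbf{Embedding and main obstacle.} For each tetrahedron $\{i,j,k,l\}$ of the factor I must select colors $\alpha_{pq}\in[\ell]$ for the six pairs $\{p,q\}\subseteq\{i,j,k,l\}$ such that all four triads $P^{pqr}_{\alpha_{pq}\alpha_{pr}\alpha_{qr}}$ are simultaneously $(\delta_3,\ge d_3)$-quasirandom. I call a triple $\{p,q,r\}$ of clusters \emph{very bad} if more than, say, a $1/8$-fraction of its triads fail quasirandomness; by Markov, very bad triples number at most $O(\delta_3) \binom{t}{3}$, and since factor tetrahedra are disjoint, the number of factor tetrahedra containing any very bad triple is correspondingly small. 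I skip these tetrahedra. On each remaining \emph{good} tetrahedron, a uniformly random choice of the six colors succeeds in making all four triads quasirandom-and-dense by a union bound over the four triples, and then Lemma~\ref{lm:67} covers all but $4\nu m$ of its $4m$ vertices by vertex-disjoint squared $Q$-paths. Summing the uncovered vertices --- those in $V_0$ ($\le\delta_3 n$), those in deleted or skipped clusters ($\le(2\sqrt{\tau}t+14+o(t))m$), and the $4\nu m$ leftovers per tetrahedron (totalling $\le\nu n=\mu n/4$) --- stays below $\mu n$. The main obstacle is precisely this last color-coordination step: because the six colors for a tetrahedron share coordinates across its four triples, the union-bound argument forces a careful parameter hierarchy tying $\delta_3$ to the upper bound $T_0$ from the regularity lemma, so that the bad-triple population remains genuinely lower-order and only $o(t)$ factor tetrahedra are discarded.
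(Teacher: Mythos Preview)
Your overall strategy---regularise, tile the reduced hypergraph, embed paths in each tile---matches the paper's, but the color-coordination step in your embedding phase contains a genuine gap.

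The problem is density. Your ``very bad'' condition and the ensuing union bound control only \emph{quasirandomness}: on a good tetrahedron $\{i,j,k,l\}$, a random choice of the six colours makes each of the four triads quasirandom with probability at least $7/8$, and $4\cdot\tfrac18<1$. But Lemma~\ref{lm:67} also requires each triad to have relative density at least $d_3$, and nothing in your argument guarantees this. Membership of $\{i,j,k\}$ in your reduced hypergraph $R$ only says that \emph{some} triad over $\{i,j,k\}$ is dense and quasirandom; it says nothing about what fraction of the $\ell^3$ triads are dense. Even when $e_H(V_i,V_j,V_k)\ge d_3 m^3$, the average relative density over the $\ell^3$ triads is only about $d_3$, so the proportion of triads with density at least $d_3/2$ can be as small as roughly $d_3/2$, nowhere near $7/8$. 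Hence the union bound cannot certify a simultaneous dense-and-quasirandom colour choice for all four triples. There is also a parameter circularity lurking in your final remark: bounding the number of skipped tetrahedra by $O(\delta_3 t^3)=o(t)$ would force $\delta_3\ll 1/t^2\le 1/T_0^2$, yet $T_0$ is itself a function of $\delta_3$ in Lemma~\ref{thm:regularity}.

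The paper avoids both issues by reversing the order of operations: it first selects a \emph{global} colouring $f\colon[t]^{(2)}\to[\ell]$ by a probabilistic argument (Azuma--Hoeffding on the pair degrees of the ``dense'' hypergraph $D_f$, Markov on the number of non-quasirandom $f$-triads), and only \emph{then} defines the reduced hypergraph $J_f=D_f\cap R_f$ and tiles it via Lemma~\ref{lm:61}. Because the colours are fixed before the tiling, every tetrahedron of the factor automatically has all four of its $f$-triads dense and quasirandom---no post-hoc coordination is needed, and the only loss ($\le 2\delta_3 t^3$ edges removed when passing from $D_f$ to $J_f$) translates into at most $O(\delta_3/\alpha)\,t^2$ bad pairs, which Lemma~\ref{lm:61} absorbs without any dependence on $t$. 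Committing to a single global $f$ up front is precisely the idea your sketch is missing.
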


\begin{proof}
	As we could replace $Q$ by $4Q$ if necessary we may suppose that $Q$ is a multiple of~$4$.
	Pick sufficiently small $d_3, \nu ,\tau \ll \alpha, \mu$ and let $\delta_3>0$, 
	$\delta_2\colon\NN \rightarrow (0,1)$, 
	$N\colon\NN \rightarrow \NN$ be the number and functions obtained 
	by applying Lemma \ref{lm:67} to $Q$, $\nu$, and $d_3$.
	W.l.o.g. $\delta_3$, $\delta_2(\cdot)$ are sufficiently small, such that 
	$\delta_3 \ll \alpha, \tau$, and $\delta_2(\ell) \ll \alpha, \ell^{-1}, \tau$. 
	For $t_0$ sufficiently large we can use Lemma \ref{thm:regularity} 
	with $\delta_3, \delta_2, t_0$ and  get an integer $T_0$. Finally we let 
	$n_0$ be sufficiently large. 

	Now let $H$ be a $3$-uniform hypergraph with $v(H)=n\geq n_0$ and 
	$\delta_2(H)\geq (\frac{3}{4}+\alpha)n$. Due to Lemma \ref{thm:regularity} 
	there exists a vertex partition $V_0\dcup V_1\dcup \ldots \dcup V_t=V$ and 
	pair partitions 
		\[
		\mathcal{P}^{ij}
		=
		\{P_\alpha ^{ij}=(V_i \dcup V_j,E_\alpha^{ij})\colon 1\leq \alpha \leq \ell \}
	\]
		of the complete bipartite graphs $K(V_i,V_j)$ for $1\leq i<j\leq t$ 
	satisfying \eqref{it:reg1}--\eqref{it:reg3}.

	We call a triad $P^{ijk}_{\alpha \beta \gamma}$ {\it dense} if 
	$d(H|P^{ijk}_{\alpha \beta \gamma})\geq \alpha /10$. 
	For every pair $i_*j_*\in [t]^{(2)}$ and 
	every $\lambda \in [\ell]$ 
	we denote the set of dense triads involving $V_{i_*}$, $V_{j_*}$, 
	and $P^{i_*j_*}_\lambda $ by $\mathcal{D}_\lambda (i_*,j_*)$.
	
	\begin{claim}\label{cl:67}
		For every $i_*j_*\in [t]^{(2)}$ we have 
		$|\mathcal{D}_\lambda (i_*,j_*)|\geq (\frac{3}{4}+\frac{\alpha}{2})\ell^2 t$.
	\end{claim}
	
	\begin{proof}
		Notice that Lemma \ref{thm:regularity}\eqref{it:reg1} yields
				\begin{equation}\label{eq:62a}
			\dfrac{n(1-\delta_3)}{t}\leq |V_k|\leq \dfrac{n}{t}
		\end{equation}
				for every $k \in [t]$. Appealing to the $(\delta_2(\ell ), 1/\ell)$-quasirandomness 
		of $P^{i_*j_*}_\lambda $ we infer
				\begin{align*}
			|E^{i_*j_*}_\lambda | &
				\geq \Big (\dfrac{1}{\ell}-\delta_2(\ell)\Big ) |V_{i_*}| |V_{j_*}|  \\
			&\geq \Big (\dfrac{1}{\ell}-\delta_2(\ell)\Big ) 
				\Big( \dfrac{(1-\delta_3)n}{t}\Big)^2 \, .
		\end{align*}
				Together with the lower bound on $\delta_2 (H)$ and $|V_0|\leq \delta_3 n$ 
		it follows that
				\begin{equation}\label{eq:63}
			\Big (\dfrac{1}{\ell}-\delta_2(\ell)\Big )
			\Big( \dfrac{(1-\delta_3)n}{t}\Big)^2 \Big( \dfrac{3}{4}+\alpha -\delta_3 \Big )n 
			\leq 
			\sum_{xy\in E_\lambda ^{i_*j_*}} |N(x,y)\setminus V_0|\, .
		\end{equation}
				
		On the other hand we can derive an upper bound on the right side by counting the 
		edges in each triad using $E^{i_*j_*}_\lambda $ separately.
		Due to the triangle counting lemma and \eqref{eq:62a} each such triad contains 
		at most
				\[
		 	\Big (\dfrac{1}{\ell^3}+3\delta_2 (\ell) \Big ) \Big (\dfrac{n}{t}\Big )^3
		\]
				triangles. Therefore we have
				\[
			\sum_{xy\in E_\lambda ^{i_*j_*}} |N(x,y)\setminus V_0|
			\leq 
			t\ell^2 \dfrac{\alpha}{10} \Big(\dfrac{n}{t}\Big )^3
			\Big( \dfrac{1}{\ell^3}+3\delta_2(\ell)\Big)  
			+|\mathcal{D}_\lambda (i_*,j_*)| \Big(\dfrac{n}{t}\Big)^3
			\Big(\dfrac{1}{\ell^3}+3\delta_2(\ell)\Big ) \,.
		\]
				Combined with \eqref{eq:63} this leads because of $\delta_3\ll \alpha$ and 
		$\delta_2\ll \alpha /\ell^3$ to
				\[
			|\mathcal{D}_\lambda (i_*,j_*)| \geq (3/4+\alpha /2)\ell^2 t\,. \qedhere
		\]
			\end{proof}

	For every $f\colon[t]^2\rightarrow [\ell]$ we define a hypergraph $J_f$ on the vertex set 
	$[t]$ such that a $3$-element set $\{i, j ,k\}$ is an edge of $J_f$ if the triad 
	$P_{f(ij) f(ik) f(jk)}^{ijk}$ is dense and $H$ is $\delta_3$-quasirandom w.r.t. this triad.

	\begin{claim}\label{cl:68}
		There is $f\colon[t]^{(2)}\rightarrow [\ell]$ such that all but at most $\tau t^2$ 
		pairs $ij\in [t]^{(2)}$ have at least pair-degree $(\frac{3}{4}+\frac{\alpha}{8})t$ 
		in $J_f$.
	\end{claim}

	\begin{proof}
 		Let $D_f$ be the hypergraph on $[t]$ whose edges are the triples $ijk$ such that 
		the triad $P_{f(ij) f(ik) f(jk)}^{ijk}$ is dense, and let $R_f$ be the 
		hypergraph consisting 
		of all sets $\{i,j ,k\}$  such that  $H$ is $\delta_3$-quasirandom with respect to the triad 
		$P_{f(ij) f(ik) f(jk)}^{ijk}$. Clearly, $J_f=D_f\cap R_f$. We will show that 
		if we choose $f$ uniformly at random, then with positive probability 
		$E(\overline{R_f})\leq 2\delta_3t^3$ and $\delta_2(D_f)\geq (3/4+\alpha /4)t$ hold.
 
 		The expected value of the number of missing edges in $R_f$ is
				\[
			\EE( E(\overline{R_f}))\leq \dfrac{1}{\ell^3}\cdot \delta_3 t^3 \ell^3
			=\delta_3 t^3\,,
		\]
				since by Lemma \ref{thm:regularity}\eqref{it:reg3} there are at most 
		$\delta_3 t^3 \ell^3 $ irregular triads.
		Thus, due to Markov's inequality 
				\begin{equation}\label{eq:62}
			\PP(E(\overline{R_f})>2\delta_3t^3)<\dfrac{\delta_3t^3}{2\delta_3t^3}
			=\dfrac{1}{2} \,.
		\end{equation}
		
		Now fix a pair $i_*j_*\in [t]^{(2)}$. Estimating the expected value 
		of $d_{D_f}(i_*,j_*)$, we get for every 
		$\lambda\in [\ell]$ that 
				\begin{align*}
			&\EE\big (d_{D_f}(i_*,j_*)| f(i_*,j_*)=\lambda \big )\\
			&=\dfrac{1}{\ell^{\binom{t}{2}-1}}\sum_{f\colon[t]^2\rightarrow [\ell], f(i_*,j_*)
				=\lambda} d_{D_f}(i_*,j_*)\\
			&= \dfrac{ |\mathcal{D}_\lambda (i_*,j_*)| }{\ell^2} \,.
		\end{align*}
				By Claim \ref{cl:67} it follows that
				\begin{align*}
			\EE\big (d_{D_f}(i_*,j_*)| f(i_*,j_*)=\lambda \big )\geq (3/4+\alpha /2)t \,.
		\end{align*}
				Moreover, for $f\colon [t]^2 \rightarrow [\ell]$ with $f(i_*,j_*)=\lambda$ 
				the value 
		of $d_{D_f}(i_*,j_*)$ is completely determined by the $2(t-2)$ numbers $f(i,j)$ with 
		$|\{i,j\}\cap \{i_*,j_*\}|=1$ and if one changes one of these $2(t-2)$ values of $f$, 
		then $d_{D_f}(i_*,j_*)$ can change by at most 1.
		Thus, the Azuma-Hoeffding inequality (see, e.g.,~\cite{JLR00}*{Corollary~2.27}) 
		leads to
				\begin{align*}
			\PP\big (d_{D_f}(i_*,j_*)<(3/4+\alpha /4)t\big | f(i_*,j_*)
				=\lambda)
			< \exp\Big (-\dfrac{2(\alpha t/4)^2}{2(t-2)}\Big )\,.
		\end{align*}
				Therefore,
				\begin{equation*}
			\PP\big (d_{D_f}(i_*,j_*)<(3/4+\alpha /4)t\big | f(i_*,j_*)
			=
			\lambda \big )<e^{-\Omega(t)}
		\end{equation*}
				for each $\lambda\in [\ell]$ and hence 
				\begin{equation}
			\PP(d_{D_f}(i_*,j_*)<(3/4+\alpha /4)t)<e^{-\Omega(t)}\,.
		\end{equation}
				Therefore the probability that some pair has a pair-degree less than 
		$(3/4+\alpha /4)t$ is less than $t^2/e^{\Omega (t)}$, which proves that 
		with probability greater then $1/2$ the minimum pair-degree of $D_f$ is at least 
		$(3/4+\alpha /4)t$.
		Together with \eqref{eq:62} this shows that the probability that a function~$f$ 
		fulfills $E(\overline{R_f})\leq 2\delta_3t^3$ and $\delta_2(D_f)\geq (3/4+\alpha /4)t$ 
		is greater than zero.

		From now on let $f\colon [t]^2\rightarrow[\ell ]$ be a fixed function having these 
		two properties. Notice that $D_f\cap R_f$ arise from $D_f$ by deleting at most 
		$2\delta_3t^3$ edges. We can estimate the number $\overline{\tau} t^2$ of pairs, 
		which have afterwards a pair-degree smaller than $ (3/4+\alpha /8)t$, 
		by
				\[
			\overline{\tau} t^2 \alpha t/8\leq 6\delta_3t^3 \,.
		\]
				Thus $\overline{\tau}\leq \frac{48 \delta_3}{\alpha}$ and by our choice of 
		$\delta_3 \ll \alpha,\tau$ it follows that $\overline{\tau}\leq \tau$. 
		In other words, there are indeed at most $\tau t^2$ pairs $ij\in [t]^{(2)}$ 
		whose pair-degree in $J_f$ is smaller than $(\frac{3}{4}+\frac{\alpha}{8})t$.
	\end{proof}

	From now on we will denote the bipartite graph $P^{ij}_{f(i,j)}$ simply by $P^{ij}$, 
	where $f$ is the function obtained in Claim~\ref{cl:68}. Due to Claim \ref{cl:68} 
	we can apply Lemma~\ref{lm:61} to $J_f$ with $\alpha'=\alpha /8$ instead of $\alpha$ 
	and find a $K_4^{(3)}$-factor missing at most $2\sqrt{\tau}t+14$ vertices with 
	$\tau \ll \alpha'$. Since $Q\equiv 0 \pmod 4$, we can apply Lemma~\ref{lm:67} 
	to the ``tetrads'' corresponding to these $K_4^{(3)}$ in the reduced hypergraph. 
	Therefore all but at most 
		\[
		 \frac{n}{t}(2\sqrt{\tau}t+14)+\frac{t}{4} \cdot \nu \cdot \frac{n}{t}+\delta_3 n
		 \leq 
		 \mu n
	\]
		vertices can be covered by vertex-disjoint squared paths with $Q$ vertices.
\end{proof}


\section*{Acknowledgement} 
We would like to thank both referees for a tremendously careful reading of this 
article, which led to some substantial improvements in the presentation. 

\begin{bibdiv}
\begin{biblist}
\bib{berge}{book}{
   author={Berge, Claude},
   title={Graphs and hypergraphs},
   note={Translated from the French by Edward Minieka;
   North-Holland Mathematical Library, Vol. 6},
   publisher={North-Holland Publishing Co., Amsterdam-London; American
   Elsevier Publishing Co., Inc., New York},
   date={1973},
   pages={xiv+528},
   review={\MR{0357172 (50 \#9640)}},
}

\bib{Bermond}{article}{
   author={Bermond, J.-C.},
   author={Germa, A.},
   author={Heydemann, M.-C.},
   author={Sotteau, D.},
   title={Hypergraphes hamiltoniens},
   language={French, with English summary},
   conference={
      title={Probl\`emes combinatoires et th\'eorie des graphes},
      address={Colloq. Internat. CNRS, Univ. Orsay, Orsay},
      date={1976},
   },
   book={
      series={Colloq. Internat. CNRS},
      volume={260},
      publisher={CNRS, Paris},
   },
   date={1978},
   pages={39--43},
   review={\MR{539937 (80j:05093)}},
}

\bib{dirac}{article}{
   author={Dirac, G. A.},
   title={Some theorems on abstract graphs},
   journal={Proc. London Math. Soc. (3)},
   volume={2},
   date={1952},
   pages={69--81},
   issn={0024-6115},
   review={\MR{0047308 (13,856e)}},
}
\bib{JLR00}{book}{
   author={Janson, Svante},
   author={{\L}uczak, Tomasz},
   author={Ruci{\'n}ski, Andrzej},
   title={Random graphs},
   series={Wiley-Interscience Series in Discrete Mathematics and
   Optimization},
   publisher={Wiley-Interscience, New York},
   date={2000},
   pages={xii+333},
   isbn={0-471-17541-2},
   review={\MR{1782847}},
   doi={10.1002/9781118032718},
}
\bib{KeMy15}{article}{
   author={Keevash, Peter},
   author={Mycroft, Richard},
   title={A geometric theory for hypergraph matching},
   journal={Mem. Amer. Math. Soc.},
   volume={233},
   date={2015},
   number={1098},
   pages={vi+95},
   issn={0065-9266},
   isbn={978-1-4704-0965-4},
   review={\MR{3290271}},
}
\bib{KoSaSz97}{article}{
   author={Koml\'os, J\'anos},
   author={S\'ark\"ozy, G\'abor N.},
   author={Szemer\'edi, Endre},
   title={Blow-up lemma},
   journal={Combinatorica},
   volume={17},
   date={1997},
   number={1},
   pages={109--123},
   issn={0209-9683},
   review={\MR{1466579}},
   doi={10.1007/BF01196135},
}
	
\bib{KoSaSz98}{article}{
   author={Koml\'os, J\'anos},
   author={S\'ark\"ozy, G\'abor N.},
   author={Szemer\'edi, Endre},
   title={Proof of the Seymour conjecture for large graphs},
   journal={Ann. Comb.},
   volume={2},
   date={1998},
   number={1},
   pages={43--60},
   issn={0218-0006},
   review={\MR{1682919}},
   doi={10.1007/BF01626028},
}

\bib{Lov12}{book}{
   author={Lov{\'a}sz, L{\'a}szl{\'o}},
   title={Large networks and graph limits},
   series={American Mathematical Society Colloquium Publications},
   volume={60},
   publisher={American Mathematical Society, Providence, RI},
   date={2012},
   pages={xiv+475},
   isbn={978-0-8218-9085-1},
   review={\MR{3012035}},
   doi={10.1090/coll/060},
}

\bib{NPRS09}{article}{
   author={Nagle, Brendan},
   author={Poerschke, Annika},
   author={R\"odl, Vojt\v ech},
   author={Schacht, Mathias},
   title={Hypergraph regularity and quasi-randomness},
   conference={
      title={Proceedings of the Twentieth Annual ACM-SIAM Symposium on
      Discrete Algorithms},
   },
   book={
      publisher={SIAM, Philadelphia, PA},
   },
   date={2009},
   pages={227--235},
   review={\MR{2809322}},
}
	
\bib{Pi08}{article}{
   author={Pikhurko, Oleg},
   title={Perfect matchings and $K^3_4$-tilings in hypergraphs of large
   codegree},
   journal={Graphs Combin.},
   volume={24},
   date={2008},
   number={4},
   pages={391--404},
   issn={0911-0119},
   review={\MR{2438870}},
   doi={10.1007/s00373-008-0787-7},
}

\bib{CR17}{article}{
   author={Reiher, Chr.},     
   title={Extremal problems in uniformly dense hypergraphs},
   eprint={1901.04027},
   note={Submitted.}
}

\bib{R3S2}{article}{
   author={Reiher, Chr.},
   author={R{\"o}dl, Vojt{\v{e}}ch},
   author={Ruci{\'n}ski, Andrzej},
   author={Schacht, Mathias},
   author={Szemer{\'e}di, Endre},      
   title={Minimum vertex degree condition for tight Hamiltonian cycles in 
   $3$-uniform hypergraphs},
   eprint={1611.03118},
   note={Submitted.}
}
	
\bib{rrs3}{article}{
   author={R{\"o}dl, Vojt{\v{e}}ch},
   author={Ruci{\'n}ski, Andrzej},
   author={Szemer{\'e}di, Endre},
   title={A Dirac-type theorem for 3-uniform hypergraphs},
   journal={Combin. Probab. Comput.},
   volume={15},
   date={2006},
   number={1-2},
   pages={229--251},
   issn={0963-5483},
   review={\MR{2195584 (2006j:05144)}},
   doi={10.1017/S0963548305007042},
}

\bib{3}{article}{
   author={R{\"o}dl, Vojt{\v{e}}ch},
   author={Ruci{\'n}ski, Andrzej},
   author={Szemer{\'e}di, Endre},
   title={Dirac-type conditions for Hamiltonian paths and cycles in
   3-uniform hypergraphs},
   journal={Adv. Math.},
   volume={227},
   date={2011},
   number={3},
   pages={1225--1299},
   issn={0001-8708},
   review={\MR{2799606 (2012d:05213)}},
   doi={10.1016/j.aim.2011.03.007},
}

\bib{RS07}{article}{
   author={R\"odl, Vojt\v ech},
   author={Schacht, Mathias},
   title={Regular partitions of hypergraphs: regularity lemmas},
   journal={Combin. Probab. Comput.},
   volume={16},
   date={2007},
   number={6},
   pages={833--885},
   issn={0963-5483},
   review={\MR{2351688}},
}

\bib{RS12}{article}{
   author={R\"{o}dl, Vojt\v{e}ch},
   author={Schacht, Mathias},
   title={Regularity lemmas for graphs},
   conference={
      title={Fete of combinatorics and computer science},
   },
   book={
      series={Bolyai Soc. Math. Stud.},
      volume={20},
      publisher={J\'{a}nos Bolyai Math. Soc., Budapest},
   },
   date={2010},
   pages={287--325},
   review={\MR{2798368}},
   doi={10.1007/978-3-642-13580-4\_11},
}

\bib{PDS}{article}{
author={Seymour, Paul D.},
title={Problem Section, Problem 3},
conference={
title={Combinatorics},
address={Proc. British Combinatorial Conf., Univ. Coll. Wales,
Aberystwyth},
date={1973},
},
book={
publisher={Cambridge Univ. Press, London},
},
date={1974},
pages={201--202. London Math. Soc. Lecture Note Ser., No. 13},
review={\MR{0345829}},
}
		
\end{biblist}
\end{bibdiv}
\noindent
\end{document}